\newcommand\copyrighttext{%
  \footnotesize 
  This is an author-created, un-copyedited version of an article accepted for publication in Mathematische Nachrichten.
  The publisher is not responsible for any errors or omissions in this version of the manuscript or any version derived from it.}
\newcommand\copyrightnotice{%
\begin{tikzpicture}[remember picture,overlay]
\node[anchor=north,yshift=-70pt] at (current page.north) {\fbox{\parbox{\dimexpr\textwidth-\fboxsep-\fboxrule\relax}{\copyrighttext}}};
\end{tikzpicture}%
}
\theoremstyle{plain}
\newtheorem{theorem}{Theorem}
\newtheorem{lemma}[theorem]{Lemma}
\theoremstyle{remark}
\newtheorem*{remark}{Remark}
\def\leqslant{\le}
\def\bq{\begin{eqnarray}}
\def\eq{\end{eqnarray}}
\def\bqq{\begin{eqnarray*}}
\def\eqq{\end{eqnarray*}}
\def\nn{\nonumber}
\def\minus {\backslash}
\def\eps{\varepsilon}
\def\fall{~\text{for all}~}
\def\d {\partial}
\def\E {\mathscr{E}}
\def\EE {\mathscr{E}(t,x(t))}
\def\F {\mathscr{F}}
\def\R {\mathbb{R}}
\def\real {\mathds{R}}
\def\N {\mathds{N}}
\def\diss {\mathscr{D}iss}
\def\nn {\nonumber}
\def\sign {\operatorname{sign}}
\def\N0 {\mathop N\limits^{\circ}}
\def\fall{~\text{for all}~}
\def\diss {\mathscr{D}iss}
\title[Regularity of weak solutions in one-dimension]{Regularity of weak solutions to rate-independent systems in one-dimension}
\author[Minh N.~Mach]{}
\subjclass{49N60.}
 \keywords{regularity, weak solutions, energetic solutions, BV solutions, rate-independent systems, SBV, piecewise $C^1$, finite jumps.}
 \email{mach@mail.dm.unipi.it}
\thanks{The author is partially supported by the PRIN 2008 grant ``Optimal mass transportation, Geometric and Functional Inequalities and Applications" and the FP7-REGPOT-2009-1 project ``Archimedes Center for Modeling, Analysis and Computation".}
\begin{document}
\copyrightnotice
\date{{November 25, 2012}}
\maketitle

\centerline{\scshape Mach Nguyet Minh}
\medskip
{\footnotesize
 \centerline{Dipartimento di Matematica}
   \centerline{Universit\`a di Pisa}
   \centerline{ Largo  Bruno Pontecorvo 5, 56127 Pisa, Italy}
} 

\begin{abstract} We show that under some appropriate assumptions, every {\it weak solution} (e.g.  energetic solution) to a given rate-independent system is of class SBV, or has finite jumps, or is even piecewise $C^1$. Our assumption is essentially imposed on the energy functional, but not convexity is required. 

\end{abstract}

\section{Introduction}
The rate independency is the property indicating to those systems which are subjected by an external loading on a time scale that is much slower than any internal time scale, but still much faster than the time so that the system reaches equilibrium, so that the inertia and kinetic energies can be neglected. The main feature of rate-independent systems is that the changes of the rate of the solutions essentially depends on the changes of the velocity of the loading, namely if the loading acts twice faster, then the solutions also respond twice faster. Rate-independent systems are used to characterized many physical phenomena involved in plasticity, phase transformation (electromagnetism, superconductivity or dry friction on surfaces), and some certain hysteresis models (shape-memory alloys, quasistatic delamination, fracture, etc.). For a detailed discussion on the rate-independent systems, we refer to the books \cite{KraPok-89, Monteiro-93, Visintin-94, BroSpr-96}.

In this paper, we are interested in the regularity of weak solutions to one-dimensional rate-independent systems. In one-dimension, a rate-independent system is characterized by an energy functional $\E \in C^1([0,T]\times \R; \R)$ and a dissipation function, which we will take the usual distance $|\cdot|$ for simplicity. A BV function $x: [0,T] \to \real$ is called a {\em weak solution} to the rate-independent system with the initial position $x_0\in \R$ if $x(0)=x_0$, and $x(\cdot)$ satisfies
\begin{itemize}
\item[(i)] the {\em weak local stability}, that 
\begin{equation} \label{eq:w-LS}
|\d_x \E(t,x(t))| \le 1  
\end{equation}
for every $t\in [0,T]$ such that $x(\cdot)$ is continuous at $t$, and
\item[(ii)] the {\em energy-dissipation upper bound}, that
\begin{equation} \label{eq:ED-upper}
\E(t_2,x(t_2)) - \E(t_1,x(t_1)) \le \int_{t_1}^{t_2} \d_t \E(s,x(s))\,ds - \diss(x(\cdot);[t_1,t_2]), 
\end{equation}
for all $0 \le t_1 \le t_2 \le T$. 
\end{itemize}
Here we define the dissipation energy
\[\diss(x(\cdot);[t_1,t_2]):= \sup \left\{ \sum_{n =1}^{N} |x(s_n)-x(s_{n-1})| \; | \; N \in \mathbb{N}, t_1 \le s_0 < s_1 < \dots < s_N \le t_2 \right\}. \]

A particular case of weak solutions is the {\em energetic solutions}, which was first introduced by Mielke and Theil \cite{MieThe-99} and further studied in \cite{MieThe-04, MaiMie-05, FraMie-06, Mielke-06}. A BV function $u: [0,T] \to \real$ is called an energetic solution to the rate-independent system with the initial position $x_0\in \R$ if $x(0)=x_0$ and $x(\cdot)$  satisfies 
\begin{itemize}
\item[(i)] the \textit{global stability}, that 
\bq \label{eq:global-stability}
\E(t,u(t)) \le \E(t,x)+|x-u(t)| 
\eq
for all  $(t,x) \in [0,T]\times \real$, and
\item[(ii)] and the \textit{energy-dissipation balance}, that 
\bq \label{eq:energy-dissipation-old}
\E(t_2,u(t_2))-\E(t_1,u(t_1))=\int_{t_1}^{t_2} {\partial_t \E(s,u(s))\,ds} - \diss( u;[t_1,t_2]) .
\eq
for all $0 \le t_1 < t_2 \le T$.
\end{itemize}
However, our notion of weak solutions also contains BV solutions \cite{MieRosSav-12}, local solutions \cite{ToaZan-09}, parametrized solutions \cite{MieRosSav-10} and epsilon-stable solutions \cite{Larsen-10}.

When the energy functional is {\it convex}, the regularity was already investigated by Mielke, Rossi and Thomas \cite{MieRos-07,ThoMie-10}. They showed that if the energy functional $\E(t,\cdot)$ is $\alpha$-convex, $\d_t \E(t,\cdot)$ is Lipschitz continuous (or H\"older continuous), and
\bqq
 |\d_t\E(t,x)| \le \lambda \,\E(t,x) \quad \forall t \in [0,T]
\eqq 
for some constant $\lambda>0$, then every energetic solution is Lipschitz continuous (or H\"older continuous, respectively). Moreover, if the energy functional has the form $\E(t,x)= W(x)-\ell(t)\,x$, where $W(x)$ is the double-well potential and $\ell(t)$ is a smooth loading, Stefanelli \cite{Ste-09} proposed a variational characterization of rate-independent evolution. Later, if $\E(t,x)= W(x)-\ell(t)\,x$ for a general smooth potential $W(x)$ and a monotone loading function $\ell(t)$, Rossi and Savar\'e \cite{RiSa-12} derived a full characterization of all energetic and BV solutions to rate-independent systems in one-dimension.

However, in the general case (in particular when the energy functional is {\it non-convex}), the solutions may behave badly, as we can see in the following 

\begin{theorem}[Any increasing function is an energetic solution] \label{thm:increasing-is-energetic}
Let $u: [0,T] \to \real$ be an arbitrary increasing and left-continuous function. Then $u$ is an energetic solution of some rate-independent system with smooth energy functional.
\end{theorem}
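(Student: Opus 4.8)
The plan is to avoid building an energy tailored to the fine structure of $u$, and instead to exhibit one fixed, explicit smooth energy for which all the defining conditions become immediate. Concretely, I would take the affine, time-independent energy
\[
\E(t,x) := -x
\]
on $[0,T]\times\R$ (one may equally use $\E(t,x) = -x + \phi(t)$ for any $\phi\in C^1([0,T];\R)$ to make the energy genuinely $t$-dependent; this changes nothing below). This is plainly of class $C^\infty$, hence an admissible energy functional, and the associated initial position is simply $x_0 = u(0)$. It then remains to check the two conditions in the definition of energetic solution.

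For the global stability \eqref{eq:global-stability}, I would observe that with $\E(t,x) = -x$ the inequality $\E(t,u(t)) \le \E(t,x) + |x-u(t)|$ reads $-u(t) \le -x + |x-u(t)|$, i.e. $x - u(t) \le |x-u(t)|$, which holds for every $x\in\R$ because $a \le |a|$ for all $a\in\R$. Thus \eqref{eq:global-stability} holds at every $(t,x)$, with equality exactly when $x \ge u(t)$; note that the monotonicity of $u$ plays no role at this step.

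For the energy-dissipation balance \eqref{eq:energy-dissipation-old}, the only real computation is the dissipation of the monotone $\mathrm{BV}$ function $u$. Since $u$ is increasing, any sum $\sum_{n}|u(s_n)-u(s_{n-1})|$ telescopes to $u(s_N)-u(s_0)$, and taking the supremum over partitions of $[t_1,t_2]$ gives $\diss(u;[t_1,t_2]) = u(t_2) - u(t_1)$, the left-continuity normalization fixing the pointwise values so that this remains valid including at jump times. Because $\d_t\E\equiv 0$, the right-hand side of \eqref{eq:energy-dissipation-old} equals $-\diss(u;[t_1,t_2]) = u(t_1)-u(t_2)$, while the left-hand side equals $\E(t_2,u(t_2)) - \E(t_1,u(t_1)) = -u(t_2)+u(t_1)$; the two coincide, so \eqref{eq:energy-dissipation-old} holds as an exact equality.

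I expect no genuine obstacle: the entire content is the observation that an affine energy of slope $-1$ places the driving force exactly at the friction threshold, so that rightward motion of any magnitude and at any rate is energetically neutral. The hypothesis of monotonicity is used at precisely one point—in identifying $\diss(u;[t_1,t_2])$ with $u(t_2)-u(t_1)$ rather than with the full variation—and this is exactly where it is essential: for a non-monotone $u$ the same energy fails even the weak upper bound \eqref{eq:ED-upper}, since the variation strictly exceeds $u(t_2)-u(t_1)$. The only step warranting a little care is the treatment of the at most countably many jumps of $u$ in the dissipation formula, which is handled cleanly by the left-continuity convention.
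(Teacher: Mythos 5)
Your proposal is correct with respect to the paper's definitions, and it takes a genuinely different and more elementary route than the paper. Your computations check out: with $\E(t,x)=-x$, global stability \eqref{eq:global-stability} reduces to $x-u(t)\le |x-u(t)|$, and since $\diss(u;[t_1,t_2])=u(t_2)-u(t_1)$ for any increasing $u$ (the partition sums telescope and the endpoints $s_0=t_1$, $s_N=t_2$ are admissible in the supremum), both sides of \eqref{eq:energy-dissipation-old} equal $u(t_1)-u(t_2)$. In fact your argument never uses left-continuity at all, so you prove a slightly stronger statement than the one asserted. The paper instead builds an energy \emph{adapted} to $u$: via a smooth Urysohn-type construction (Lemma \ref{le:classical-2}) it produces $g$ vanishing exactly on the completed graph of $u$ and sets $\E(t,x)=\int_{x_0}^x \bigl(g(t,y)+1\bigr)\,dy$, so that $\partial_x\E(t,u(t))=-1$ with $\partial_x\E\in(-1,0]$ above and $[-2,-1)$ below the graph; it then verifies a characterization of energetic solutions (left-continuity, $\diss(u;[0,T])=|u(T)-u(0)|$, and global minimality of $x\mapsto\E(t,x)+|x-x_0|$) rather than checking \eqref{eq:global-stability}--\eqref{eq:energy-dissipation-old} directly. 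What that extra work buys is that the paper's system sits inside the standard energetic framework: $\E$ is bounded below, $\E(t,\cdot)+|\cdot-x_0|$ is coercive with $u(t)$ as its \emph{unique} minimizer, and after adding a constant the usual power control $|\partial_t\E|\le\lambda\,\E$ holds --- hypotheses presupposed by the cited characterization (Proposition 5.13 of Alberti--DeSimone) and by the general existence theory. Your energy is degenerate in all these respects: it is unbounded below, the condition $|\partial_t\E|\le\lambda\,\E$ cannot be arranged by adding a constant (it would force $\E\ge 0$), and the minimizers of $x\mapsto\E(t,x)+|x-u(0)|$ form the whole half-line $[u(0),\infty)$. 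Since the theorem as stated only asks for \emph{some} rate-independent system with a smooth ($C^1$) energy, this degeneracy is immaterial and your proof stands; it even makes the non-uniqueness phenomenon starker, since one fixed system $(\E,|\cdot|,u(0))$ admits \emph{every} increasing function starting at $u(0)$ as an energetic solution. The trade-off is that the paper's more laborious construction shows the pathology persists even within the class of energies satisfying the standard structural assumptions, which a referee could reasonably regard as the more meaningful counterexample.
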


In this paper, we shall prove that under some certain requirements (but not convexity) on the energy functional, any weak solution is of class SBV. Moreover, we give sufficient conditions ensuring that every weak solution has only finitely many jumps, and it is piecewise $C^1$-smooth in one-dimensional case. In recent years, many authors investigated one-dimensional rate-independent models for the propagation of a single crack \cite{KMZ08,ToZa-09,Neg10,Rac12,KnSc13}. However, it is not so obvious to check whether the energy functionals in these models satisfy assumptions (H1)-(H5). We hope to come back to the SBV-regularity with easier-to-check assumptions on energy functionals  in higher dimensions in future work.

\section{Main results}

Our first regularity result deal with the SBV property of weak solutions. We shall need a technical condition.
\begin{itemize}

\item[(H1)] $\E(t,x)$ is of class $C^3$ and the set
\[\{ (t,x)\in (0,T)\times \mathbb{R} \;|\; \d_x \E (t,x)\in \{-1,1\}, {\partial _{xx}}\E(t,x) = {\partial _{xxx}}\E(t,x) = 0\} \]
has only finitely many elements.
\end{itemize}

Note that no convexity is imposed. We have 

\begin{theorem}[SBV regularity]\label{thm:SBV} Assume that {\rm (H1)}  holds true. Then every BV function $x(\cdot)$ satisfying the weak local stability  (\ref{eq:w-LS}) and the energy-dissipation upper bound (\ref{eq:ED-upper}) must be of class SBV.  
\end{theorem}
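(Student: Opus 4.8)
The plan is to show that the Cantor part of $Dx$ vanishes, since the SBV property is exactly the statement $D^c x = 0$ in the decomposition $Dx = D^a x + D^c x + D^j x$ into absolutely continuous, Cantor, and jump parts. First I would extract pointwise information from the energy-dissipation upper bound. Writing $e(t) := \E(t,x(t))$ and using that $\E$ is $C^1$ while $x$ is BV, the chain rule gives the diffuse part of $De$ as $\partial_t\E(t,x(t))\,\mathcal{L}^1 + \partial_x\E(t,x(t))\,D^{\mathrm{diff}}x$, where $D^{\mathrm{diff}} = D^a + D^c$. Since (\ref{eq:ED-upper}) holds on every interval and $\diss(x;[t_1,t_2]) = |Dx|([t_1,t_2])$, passing to derivatives yields, after cancelling the common term $\partial_t\E\,\mathcal{L}^1$, the measure inequality $\partial_x\E(t,x(t))\,D^{\mathrm{diff}}x + |D^{\mathrm{diff}}x| \le 0$. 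Writing $\sigma := dD^{\mathrm{diff}}x/d|D^{\mathrm{diff}}x| \in \{-1,+1\}$ for the polar density, this reads $(\partial_x\E(t,x(t))\,\sigma + 1)\,|D^{\mathrm{diff}}x| \le 0$; combined with the weak local stability $|\partial_x\E|\le 1$ (valid $|D^c x|$-a.e., since Cantor points are continuity points of $x$), which gives the reverse inequality $\partial_x\E\,\sigma + 1 \ge 0$, I would conclude that $\partial_x\E(t,x(t)) = -\sigma(t) \in \{-1,+1\}$ for $|D^{\mathrm{diff}}x|$-a.e. $t$, in particular $|D^c x|$-a.e.

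The engine for the rest is the following measure-theoretic fact about BV functions, which I expect to be the main obstacle and to require a careful proof (essentially that the push-forward of $|D^c u|$ under the precise representative $\tilde u$ is absolutely continuous with respect to $\mathcal{L}^1$): for any $u \in BV$ and any $c \in \R$,
\[ |D^c u|(\{\tilde u = c\}) = 0. \]
Granting this, I would bootstrap. Set $\psi(t) := \partial_x\E(t,x(t))$; since $\E \in C^3$, $\psi$ is BV with Cantor part $D^c\psi = \partial_{xx}\E(t,x(t))\,D^c x$. The previous step shows $\psi = \pm 1$ at $|D^c x|$-a.e.\ point, so applying the fact to $\psi$ at the levels $c = \pm 1$ gives $|D^c\psi|(\{\psi = \pm 1\}) = 0$, whence $\partial_{xx}\E(t,x(t)) = 0$ for $|D^c x|$-a.e.\ $t$.

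Repeating once more with $\chi(t) := \partial_{xx}\E(t,x(t))$, whose Cantor part is $D^c\chi = \partial_{xxx}\E(t,x(t))\,D^c x$: since $\chi = 0$ at $|D^c x|$-a.e.\ point, the fact applied at $c = 0$ forces $\partial_{xxx}\E(t,x(t)) = 0$ for $|D^c x|$-a.e.\ $t$. Consequently, for $|D^c x|$-a.e.\ $t$ the point $(t,x(t))$ lies in the set $\{\partial_x\E\in\{-1,1\},\ \partial_{xx}\E = \partial_{xxx}\E = 0\}$, which by (H1) is finite. Hence $|D^c x|$ is concentrated on the finitely many time-coordinates of these points; but $D^c x$, being the Cantor part, has no atoms, so a non-atomic measure supported on a finite set must vanish, giving $D^c x = 0$ and thus $x \in \mathrm{SBV}$. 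The delicate points I anticipate are the rigorous justification of the chain rule for the Cantor part (which is precisely where the push-forward absolute continuity above is needed) and the endpoint/representative bookkeeping in converting (\ref{eq:ED-upper}) into a measure inequality, though the latter affects only the atomic part and not the diffuse analysis that drives the argument.
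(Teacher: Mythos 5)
Your proposal is correct, but it takes a genuinely different route from the paper's. The paper argues pointwise with difference quotients: its Lemma \ref{le:-1<=dx<=1-1-dim} derives $\partial_x\E(t,x(t))\in\{-1,1\}$ at non-jump times outside ${\rm int}(N\cup J)$ by dividing the energy-dissipation inequality by $|x(t_n)-x(t)|$, and then splits $(0,T)\setminus J$ into the set $A$ where $\liminf_h|(x(t+h)-x(t))/h|<\infty$ --- shown to be $|\mu_s|$-null by a general lemma on tangent lines to the ``complete graph'' of a BV function (Lemma \ref{le:msA=0}/\ref{le:derivative-of-BV-functs}) --- and the set $B$ where the difference quotients blow up, which is shown to be at most \emph{countable} by Taylor-expanding $\partial_x\E$ and $\partial_{xx}\E$ along the solution and invoking (H1) (Lemma \ref{le:msB=0}). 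You instead run everything at the level of measures: the energy-dissipation upper bound plus Vol'pert's chain rule gives $\partial_x\E(t,x(t))=\pm1$ for $|D^cx|$-a.e.\ $t$ (the measure-theoretic counterpart of Lemma \ref{le:-1<=dx<=1-1-dim}), and then the chain rule for the Cantor part together with the level-set fact $|D^cu|(\{\tilde u=c\})=0$ bootstraps to $\partial_{xx}\E=\partial_{xxx}\E=0$ $|D^cx|$-a.e., so that $|D^cx|$ is a non-atomic measure concentrated on the finitely many times supplied by (H1) and hence vanishes. Your key fact is true --- it follows from the coarea formula, equivalently from the absolute continuity of $\tilde u_{\#}\bigl(|D^au|+|D^cu|\bigr)$ with respect to Lebesgue measure --- and it plays for your argument the role that the complete-graph Lemma \ref{le:derivative-of-BV-functs} plays for the paper's; you are right that this is the one step needing a careful proof, and your handling of the endpoint/representative issues (they only affect the atomic part) is also sound. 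Your route dispenses with the countability argument entirely, and it extends verbatim to the countable version of (H1) mentioned in the paper's Remark and, with obvious modifications, to higher dimensions; the paper's route is more elementary once its two BV lemmas are in place, relying only on classical Taylor expansions at accumulation points.
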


\begin{remark} The SBV regularity still holds if the set in (H1) is at most countable (instead of finite). Moreover, the result can be generalized in higher dimensions as follows (see \cite{Minh-Thesis} for a detailed proof).  We assume that the energy functional $\E(t,x)$ is of class $C^3$ and the set
\[ \{ (t,x)\in (0,T)\times \mathbb{R}^d \; |\; |\nabla_x \E(t,x)|=1, G(t,x)= ({\nabla_x}\E(t,x))\cdot({\nabla_x} F(t,x))= 0  \} \]
is at most countable, where the function $F(t,x)$ is defined by 
\[
F (t,x):= ({\nabla_x}\E(t,x))\cdot H(t,x) \cdot ({\nabla_x}\E(t,x))^T
\]
with the Hessian matrix 
\[{[{H(t,x)}]_{ij}} := ({\partial _{{x_i}}}{\partial _{{x_j}}}\E)(t,x).\]
Then every BV function $x: [0,T] \to \real^d$ (with $d \ge 1$) satisfying the weak local stability  (\ref{eq:w-LS}) and the energy-dissipation upper bound (\ref{eq:ED-upper}) must be of class SBV.  
\end{remark}

In the next result, we consider the differentiability of weak solutions. By a technical reason, we have to replace the above weak local stability by the strong-local stability:
\bq \label{eq:s-LS}
z=x(t) ~{\rm is~ a ~local~ minimizer~of~ the~ functional}~ z\mapsto \E(t,z)+|z-x(t)|
\eq 
for every $t\in [0,T]\minus J$, where $J$ is the jump set of $x(\cdot)$, which will be assumed to be finite. Notice that, because of this condition, Theorem \ref{thm:differentiable} is only valid for a more restrictive class of weak solutions (i.e. energetic solutions). Moreover, we shall replace the condition (H1) on the energy functional by some of the following.

\begin{itemize}

\item[(H2)] The set
\bqq
  \left\{ {(t,x)\in (0,T)\times \mathbb{R} \;|\; \d_x \E (t,x)\in \{-1,1\}, \partial _{xx}\E(t,x) = \partial _{xt} \E(t,x) = 0,} \right.\hfill\\
  \left. {[\d_{xxt}\E (t,x) ]^2= \d_{xtt}\E (t,x)\cdot \d_{xxx} \E (t,x)} \right\} 
\eqq
has only finitely many elements.

\item[(H3)] The set
\[\{ (t,x)\in (0,T)\times \mathbb{R} \;|\; {\d_x \E (t,x)\in \{-1,1\}, \partial _{xx}}\E(t,x) = {\partial _{xt}}\E(t,x) = 0\} \]
has only finitely many elements.

\item[(H4)] The set
\[\{ (t,x)\in [0,T]\times \mathbb{R}\; |\; {\d_x \E (t,x)\in \{-1,1\}, \partial _{xt}}\E(t,x) = {\partial _{xtt}}\E(t,x) = 0\}\]
is empty.
\end{itemize}

Now we define the right and left derivatives $x'_+(t)$, $x'_-(t)$ as follows
\[ x'_+(t):=\lim_{s\downarrow t} \frac{x(s)-x(t)}{s-t},~~x'_-(t):=\lim_{s\uparrow t} \frac{x(s)-x(t)}{s-t}.
\]
and say that $s$ is an isolated point  of $I$ if there exists $\eps>0$ such that 
\[(s-\eps,s+\eps)\cap I=\{s\}.\]
We have the following theorem.

\begin{theorem}[Differentiability]\label{thm:differentiable} Assume that the BV function $x:[0,T]\to \R$ has only finitely many jump points and satisfies the strong-local stability (\ref{eq:s-LS}) and the energy-dissipation upper bound (\ref{eq:ED-upper}). Then we have the following statements.

\begin{itemize}
\item[(i)] If {\rm (H1)} holds true, then we can decompose $[0,T]$ into four disjoint sets $I_1, I_2, I_3$ and $J$ such that 
\begin{itemize}

\item For every $t\in I_1$, $x'(t)$ does not exist and either $x'_-(t) = 0$ or $x'_+(t)=0$.

\item For every $t\in I_2$, $x_-'(t)$ and $x'_+(t)$ do exist, but they are different. Moreover, $x(\cdot)$ is differentiable in a neighborhood of $t$ (except the point $t$ itself) and  
\[x'_+(t)=\lim_{s\downarrow t} x'(s),~~x'_-(t)=\lim_{s \uparrow t} x'(s).\]

\item For every $t\in I_3$, $x(\cdot)$ is differentiable at $t$, namely $x'(t)$ exists.

\item $J$ is the jump set of $x(\cdot)$.
\end{itemize}
Notice that both $I_1$ and $I_2$ are discrete sets. Moreover, if {\rm(H1)} and {\rm(H2)} holds true, then $I_1\cup I_2$ is also a discrete set.

\item[(ii)] If {\rm (H1)} and {\rm (H3)} hold true, then there exists a set $I$ of isolated points such that for any $t\in (0,T)\minus I$, the (classical) derivative $x'(t)$ exists. Moreover, the function $x'(\cdot)$ is continuous on  $(0,T)\minus I$. 

\item[(iii)] If {\rm (H1)}, {\rm (H3)} and {\rm (H4)} hold true, then there exist {\em finite} disjoint open intervals $\{I_n\}_{n\ge 1}^M$ such that $[0,T]=\cup_{n\ge 1}^M \overline{I_n}$, and $x(\cdot)$ is $C^1$ on any interval $I_n$.
\end{itemize}
\end{theorem}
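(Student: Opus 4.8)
The plan is to work away from the (finite) jump set $J$ and to exploit the fact that, at continuity points, the strong-local stability (\ref{eq:s-LS}) confines the graph of $x$ to the \emph{active set}
\[
\mathcal{Z}:=\{(t,x)\in(0,T)\times\R \;|\; \partial_x\E(t,x)\in\{-1,1\}\}.
\]
First I would record two elementary consequences of (\ref{eq:s-LS}) and (\ref{eq:ED-upper}): letting $z\downarrow x(t)$ and $z\uparrow x(t)$ in (\ref{eq:s-LS}) gives the local stability $|\partial_x\E(t,x(t))|\le 1$, and combining this with the energy-dissipation upper bound yields, at every continuity point $t$, the dichotomy that either $x$ is locally constant near $t$ (so $x'(t)=0$) or the graph of $x$ lies on $\mathcal{Z}$ with the sign relation
\[
\partial_x\E(t,x(t))=-\sign\big(x'(t)\big)\qquad\text{whenever } x'(t)\neq 0 .
\]
Thus all the differentiability questions reduce to the local geometry of the $C^2$ level curve $\{\partial_x\E=\pm1\}$ and to how a monotone continuous selection can traverse it.

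Next I would apply the implicit function theorem to $\mathcal{Z}$, using $\E\in C^3$. At a point of $\mathcal{Z}$ with $\partial_{xx}\E\neq 0$ the curve is locally a $C^2$ graph $x=\phi(t)$, the solution coincides with $\phi$, and differentiating $\partial_x\E(t,x(t))=\pm1$ gives
\[
x'(t)=-\frac{\partial_{xt}\E(t,x(t))}{\partial_{xx}\E(t,x(t))};
\]
such points go into $I_3$, together with the interiors of the stationary intervals. The genuinely delicate points are those of $\mathcal{Z}$ with $\partial_{xx}\E=0$. If in addition $\partial_{xt}\E\neq 0$ the curve has a vertical tangent (a \emph{turnaround}), and under (H1) the third-order term $\partial_{xxx}\E\neq 0$ forces the local parabolic normal form $t-t_0\sim c\,(x-x_0)^2$ with $c\neq 0$; if instead $\partial_{xt}\E=0$ as well, the point is \emph{singular}, and the set of such points is exactly the one controlled by (H3). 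A complementary role is played by the sign of $\partial_{xt}\E=\tfrac{d}{dt}\partial_x\E(\cdot,x_0)$, which governs whether a point sitting on the boundary becomes strictly stable (the solution may stop, producing a stationary phase) or unstable (the solution must keep moving along $\mathcal{Z}$): the corners of $x$ occur precisely at the turnaround and transition points.

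For part (i) I would then \emph{define} the decomposition: $I_1$ consists of the transition points between a moving arc and a stationary phase, where the local normal form shows that one of $x'_\pm(t)$ is the finite slope $-\partial_{xt}\E/\partial_{xx}\E$ coming from the moving side and the other is $0$; $I_2$ consists of the points where the selection crosses a turnaround with two finite but distinct one-sided slopes, $x$ being $C^2$ on a punctured neighbourhood with $x'_\pm(t)=\lim_{s\to t^\pm}x'(s)$; $I_3$ and $J$ are as above. Existence of $x'_\pm$ on $I_1\cup I_2$ and the stated neighbourhood behaviour follow from the local normal forms. Discreteness of $I_1$ and of $I_2$ individually follows because each such corner carries one of the non-degeneracies ($\partial_{xxx}\E\neq 0$ or $\partial_{xt}\E\neq 0$) quantified by (H1), so these points cannot accumulate; to exclude accumulation of $I_1$ onto $I_2$ (and conversely) I would invoke (H2), whose defining equation $[\partial_{xxt}\E]^2=\partial_{xtt}\E\,\partial_{xxx}\E$ is exactly the degeneracy that would allow the two families of corners to cluster, yielding that $I_1\cup I_2$ is discrete.

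Parts (ii) and (iii) are then upgrades of the counting. Under (H1) and (H3) the singular set is finite, and off an isolated set $I$ (the corners together with the singular points) every $t$ falls in $I_3$, so $x'(t)$ exists; continuity of $x'$ on $(0,T)\minus I$ is read off the formula $x'=-\partial_{xt}\E/\partial_{xx}\E$ and the limit relations at $I_2$-type points. Adding (H4), whose set is required to be \emph{empty} and, crucially, is taken over the \emph{closed} interval $[0,T]$, removes the only mechanism by which the isolated set $I$ could accumulate (at an endpoint or through a degenerate zero of $\partial_{xt}\E$), so $I$ is finite; the finitely many points of $I$ then cut $[0,T]$ into finitely many closed intervals on each of which $x$ traverses $\{\partial_x\E=\pm1\}$ through regular points and is therefore $C^1$. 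I expect the main obstacle to be the local analysis at the degenerate points of $\mathcal{Z}$: proving that the one-sided derivatives exist, establishing the precise parabolic normal form of the level curve from the third-order data, and translating each non-degeneracy hypothesis (H1)--(H4) into the corresponding discreteness or finiteness statement. This is where a careful blow-up of the level set $\{\partial_x\E=\pm1\}$ and the full strength of $\E\in C^3$ are needed.
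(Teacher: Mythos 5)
Your outline captures the right geometric picture --- away from jumps the solution either sits still or rides the level set $\{\partial_x\E=\pm 1\}$, with corners at degenerate points of that level set --- but two essential pieces of the argument are missing, and one of your discreteness claims rests on a false premise. First, at the degenerate points where $\partial_{xx}\E(t,x(t))=0$ and $\partial_{xxx}\E(t,x(t))\neq 0$ you never actually establish that the one-sided derivatives exist; you defer this to a ``parabolic normal form'' and a ``blow-up'' which you yourself identify as the main obstacle. The paper proves it by Taylor-expanding the identity $\partial_x\E(t_n,x(t_n))=\partial_x\E(t,x(t))$ to \emph{second} order, which shows that every limit point of the difference quotients solves the quadratic $\partial_{xtt}\E+2\,\partial_{xxt}\E\,X+\partial_{xxx}\E\,X^2=0$, and then uses two one-sided inequalities derived from stability and the energy-dissipation upper bound, namely $\partial_{xx}\E(s,x(s))\ge 0$ off the stationary interior and $\limsup_{s\to t}\partial_x\E(t,x(t))\cdot\frac{x(s)-x(t)}{s-t}\le 0$, to decide which root is selected from the right and which from the left. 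Without this step the classification into $I_1$, $I_2$, $I_3$ and all of parts (i)--(iii) have no foundation.

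Second, and more seriously, you ignore the possibility that the boundary of ${\rm int}(N)$ (the stationary set) accumulates. Your claim that discreteness of $I_1$ ``follows because each such corner carries one of the non-degeneracies quantified by (H1)'' is not correct: (H1) only excludes the triple degeneracy $\partial_{xx}\E=\partial_{xxx}\E=0$ on the level set and places no restriction on how transition points between moving arcs and stationary phases can cluster; a priori that boundary could be a Cantor set. The paper handles this by proving that at every accumulation point of the boundary of ${\rm int}(N)$ one automatically has $x'(t)=0$, so such points land in $I_3$ and $I_1$ is discrete essentially by construction (it is the set of \emph{isolated} points of that boundary). The proof of this fact is the technical heart of the whole theorem: Rolle's theorem applied to $s\mapsto\partial_x\E(s,c_n)$ on each constancy interval $[a_n,b_n]$ produces interior zeros of $\partial_{xt}\E$ along the solution, whence $\partial_{xt}\E(t,x(t))=0$ and then $\partial_{xtt}\E(t,x(t))=0$ in the limit (the latter via a strict-convexity contradiction with $|\partial_x\E|\le 1$ when $\partial_{xx}\E(t,x(t))=0$). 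These identities are precisely what makes (H4) bite in part (iii) --- it is the emptiness of the (H4)-set, not a generic non-accumulation heuristic, that forces the boundary of ${\rm int}(N)$ to be finite --- and the uniqueness of the root of the quadratic is what makes (H2) bite in part (i). Your sketches for (H2) and (H4) point at the right quantities but supply no argument, so as written the discreteness and finiteness conclusions in all three parts remain unproven.
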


In Theorem \ref{thm:differentiable}, we have required, as a-priori, that the solution has finitely many jump points. In the last result, we give a sufficient condition on the energy functional to remove this assumption. 
\begin{itemize}
\item[(H5)] The set
\[\{ (t,x)\in [0,T]\times \mathbb{R} \;|\; {\d_x \E (t,x)\in \{-1,1\}, \partial _{xx}}\E(t,x) = 0\}\]
is empty.

\end{itemize}

\begin{theorem}[Finite jumps]\label{thm:finite-jumps} Assume that {\rm (H5)} holds true. Then every BV function $x:[0,T]\to \R$ satisfying the weak local stability (\ref{eq:w-LS}) and the energy-dissipation upper bound (\ref{eq:ED-upper}) must have only finitely many jumps.
\end{theorem}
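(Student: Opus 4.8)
The plan is to argue by contradiction: assuming $x(\cdot)$ has infinitely many jumps, I would produce a point in the set that (H5) declares empty. The engine is a \emph{jump inequality} extracted from \eqref{eq:ED-upper}. Let $t_0$ be a jump point and set $x^-:=x(t_0^-)$, $x^+:=x(t_0^+)$ with $x^-\neq x^+$. Inserting continuity points $t_1\uparrow t_0$ and $t_2\downarrow t_0$ into \eqref{eq:ED-upper} and letting $t_1,t_2\to t_0$, the left side tends to $\E(t_0,x^+)-\E(t_0,x^-)$, the integral term vanishes (bounded integrand on a shrinking interval), and lower semicontinuity of the dissipation recovers the jump, $\liminf \diss(x;[t_1,t_2])\ge |x^+-x^-|$. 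Hence
\[
\E(t_0,x^+)-\E(t_0,x^-)\le -\,|x^+-x^-|.
\]
Rewriting the left side as $\int_{x^-}^{x^+}\d_x\E(t_0,s)\,ds$, this means that the mean of $\d_x\E(t_0,\cdot)+1$ over $[x^-,x^+]$ is $\le 0$ when $x^+>x^-$, and the mean of $\d_x\E(t_0,\cdot)-1$ over $[x^+,x^-]$ is $\ge 0$ when $x^+<x^-$. Meanwhile, taking continuity points $s\to t_0$ with $x(s)\to x^{\pm}$ in \eqref{eq:w-LS} yields the endpoint stability $|\d_x\E(t_0,x^{\pm})|\le 1$.

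Next I would exploit a forced non-monotonicity of $\d_x\E$ across the jump. Suppose $x^+>x^-$ and write $g:=\d_x\E(t_0,\cdot)$. Then $g+1\ge 0$ at the two endpoints but has nonpositive mean on $[x^-,x^+]$, so $g+1\le 0$ somewhere inside; thus $g$ decreases and then increases, and by the intermediate value theorem $\d_{xx}\E(t_0,\cdot)=g'$ vanishes at some $\xi\in(x^-,x^+)$. The case $x^+<x^-$ is symmetric and again gives an interior zero $\xi$ of $\d_{xx}\E(t_0,\cdot)$ lying between $x^+$ and $x^-$.

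Finally I would pass to the limit. Since $x(\cdot)$ is BV, the jump sizes are summable, so along a sequence of distinct jump points $t_n$ one has $|x(t_n^+)-x(t_n^-)|\to 0$; by compactness I extract a monotone subsequence $t_n\to t_*$, and since $x(\cdot)$ admits one-sided limits everywhere both $x(t_n^{\pm})$ converge to the corresponding one-sided limit $x_*$ of $x$ at $t_*$, whence also $\xi_n\to x_*$. Passing to a further subsequence, all these jumps share one sign, say $x(t_n^+)>x(t_n^-)$. Using uniform continuity of $\d_x\E$ near $(t_*,x_*)$, the averaged inequalities give $\d_x\E(t_*,x_*)+1\le 0$, while endpoint stability gives $\d_x\E(t_*,x_*)\ge -1$; hence $\d_x\E(t_*,x_*)=-1$ (respectively $+1$ in the opposite sign case). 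At the same time $\d_{xx}\E(t_n,\xi_n)=0$ passes to $\d_{xx}\E(t_*,x_*)=0$ by continuity. Thus $(t_*,x_*)$ lies in the set of (H5), contradicting its emptiness, so $x(\cdot)$ can have only finitely many jumps.

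I expect the main obstacle to be the last step: I must guarantee that the shrinking intervals collapse to a \emph{single} point $(t_*,x_*)$, so that the two limits --- $\d_x\E\to\pm1$ read off from the averaged inequality and $\d_{xx}\E\to 0$ read off from the interior zeros --- occur at the same place. This hinges on the jump sizes tending to $0$, which is precisely where the BV hypothesis and the regulated structure of $x(\cdot)$ enter. The careful extraction of the jump inequality (ensuring the dissipation captures the \emph{entire} jump, not less) is the secondary point that must be handled with care.
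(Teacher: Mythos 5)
Your proof is correct, but it takes a genuinely different route from the paper's. The paper argues quantitatively: it first proves a uniform separation lemma --- there is an $\eps>0$, independent of $t$, such that any two distinct points of $\{z\in[-L,L]: |\partial_x\E(t,z)|=1\}$ are at distance $\ge\eps$ (by compactness, Rolle's theorem applied to $z\mapsto|\partial_x\E(t,z)|^2$, and (H5)) --- and then shows every jump has size $\ge\eps$, by using the same jump inequality you derive to produce a point with $|\partial_x\E(t,y)|>1$ strictly inside the jump interval and hence, via the intermediate value theorem and the endpoint stability $|\partial_x\E(t,x^{\pm})|\le 1$, \emph{two} distinct points with $|\partial_x\E|=1$ inside that interval; the BV bound then caps the number of jumps. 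You instead argue by contradiction: from each jump you extract \emph{one} interior zero of $\partial_{xx}\E$ (Fermat at an interior minimum of $\partial_x\E(t_n,\cdot)$, forced by the averaged inequality versus the endpoint bounds) together with the averaged gradient inequality, and you let a sequence of shrinking jumps collapse to a single point $(t_*,x_*)$, where continuity of $\partial_x\E$ and $\partial_{xx}\E$ places $(t_*,x_*)$ in the set that (H5) declares empty. Both arguments rest on the same two pillars --- the jump inequality $\E(t,x^+)-\E(t,x^-)\le-|x^+-x^-|$ extracted from (\ref{eq:ED-upper}) and compactness against the closed condition in (H5) --- but the paper's version buys a reusable uniform lower bound $\eps$ on jump sizes, whereas yours dispenses with the uniform-in-$t$ separation lemma at the cost of the careful collapsing-interval limit you yourself flag as the delicate step (and which you handle correctly: summability of jump sizes forces them to $0$, and the regulated structure of $x$ makes $x(t_n^{\pm})\to x_*$). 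One small polish: ``$g$ decreases and then increases, and by the intermediate value theorem $g'$ vanishes'' is better phrased as ``$g$ attains an interior minimum strictly below its endpoint values, so $g'=\partial_{xx}\E(t_n,\cdot)$ vanishes there''; and note that, exactly as in the paper's proof, your use of $\partial_{xx}\E$ tacitly requires $\E$ to be of class $C^2$, which is implicit in the statement of (H5).
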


The proofs of the previous theorems are provided in the next sections.

\section{Proof of Theorem \ref{thm:increasing-is-energetic}}

We start by the following lemma

\begin{lemma}\label{le:classical-2} If $u:[0,T]\mapsto \real$ is an increasing function, then there exists a smooth function $g: [0,T] \times \real \to \real$ such that
\begin{itemize}
\item[] $g(t,x) \in [-1,0) \fall t \in [0,T] $ and $\fall x < u(t)$,
\item[] $g(t,x) 
\in (0,1] \fall t \in [0,T] $ and $\fall x > u(t)$,
\item[] $g(t,x) = 0 \fall t \in [0,T] $ and $\fall x = u(t)$,
\item[] $g(t,x) = 1$ for $x \ge M$,
\item[] $g(t,x) = -1$ for $x \le -M$,
\end{itemize}
where $M$ is such that $1-M \le u(t) \le M-1$ for every $t$.
\end{lemma}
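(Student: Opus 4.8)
The plan is to reduce the lemma to a single geometric construction: produce a $C^\infty$ function on the strip $[0,T]\times\real$ that vanishes exactly on the graph of $u$, is strictly negative below it and strictly positive above it, stays bounded by $1$ in modulus, and saturates to $\pm1$ in the far field. First I would reformulate the five pointwise conditions in terms of the two regions $A:=\{(t,x):x<u(t)\}$ and $B:=\{(t,x):x>u(t)\}$, whose common boundary is the graph $\Gamma:=\{(t,u(t)):t\in[0,T]\}$. When $u$ is continuous, $\Gamma$ is closed, its complement in the strip is the disjoint union of the \emph{open} sets $A$ and $B$, and the task becomes: build $g\in C^\infty$ with $g<0$ on $A$, $g>0$ on $B$, and $g=0$ on $\Gamma$. (At a genuine jump of $u$ the graph is no longer closed, and the delicate point is that strict positivity on the gap just above $u(t_0)$ is reconciled with continuity of $g$ only by letting $g$ vanish along the whole vertical jump-segment, which matches the energetic solution moving along that segment.)

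The key tool I would invoke is the smooth Urysohn/Whitney lemma: for any open set $\Omega\subseteq\real^2$ there is $\phi\in C^\infty(\real^2)$ with $0\le\phi\le1$, $\phi>0$ exactly on $\Omega$ and $\phi\equiv0$ on $\real^2\minus\Omega$, obtained by covering $\Omega$ with countably many balls $B_k$ ($\overline{B_k}\subseteq\Omega$), choosing bumps $\psi_k\ge0$ positive precisely on $B_k$, and setting $\phi=\sum_k c_k\psi_k$ with coefficients $c_k$ decaying fast enough that the series converges in every $C^m$. Applying this to $\Omega=A$ and $\Omega=B$ gives $\phi_A,\phi_B$, and I would put $g_0:=\phi_B-\phi_A$. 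Then $g_0=-\phi_A\in[-1,0)$ on $A$, $g_0=\phi_B\in(0,1]$ on $B$, and $g_0=0$ on $\Gamma$, so all interior sign conditions hold with $|g_0|\le1$. For the far field I would fix $M$ with $1-M\le u(t)\le M-1$ and interpolate using a smooth cutoff $\eta(x)$ equal to $0$ for $|x|\le M$ and to $1$ for $|x|\ge M+1$, setting $g:=(1-\eta(x))\,g_0+\eta(x)\,\sign(x)$. For $x\ge M$ one has $(t,x)\in B$, hence $g_0>0$ and $\sign(x)=1$, so $g\in(0,1]$ and $g\equiv1$ once $x\ge M+1$; symmetrically for $x\le-M$. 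This preserves every sign condition and produces the required saturation (after renaming $M+1$ as $M$). An equally valid alternative would be to mollify $\sign(x-u(t))$ with an $x$-dependent scale, but the Urysohn route is cleanest.

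The main obstacle is the joint $C^\infty$ regularity, which is subtler than it appears because $u$ is only continuous and may be nowhere differentiable. Since $g(t,u(t))=0$ for all $t$, if $\partial_x g$ were nonzero at some point of $\Gamma$ then the implicit function theorem would force $u$ to be smooth near that $t$; hence $g$ must necessarily degenerate along $\Gamma$ precisely where $u$ is not differentiable. This forced degeneracy is exactly what the Whitney bump-sum supplies automatically, and the rapid decay of the coefficients $c_k$ is what makes the infinite-order vanishing on $\Gamma$ compatible with global $C^\infty$ smoothness; this is the step demanding the most care. The remaining checks are routine: that $A$ and $B$ are genuinely open so that ``positive exactly on $\Omega$'' yields strict inequalities on all of $A$ and all of $B$, and that the far-field gluing introduces no spurious zeros.
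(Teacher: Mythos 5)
Your approach coincides with the paper's in its essentials: both rest on the Whitney/Urysohn fact (Lemma \ref{le:classical-1} in the paper) that a smooth $[0,1]$-valued function can be made to vanish exactly on a prescribed closed set by summing bumps with rapidly decaying $C^m$-norms, and both obtain $g$ as a difference of two such functions, one for the region below the graph of $u$ and one for the region above it. The only cosmetic difference is the far field: the paper prescribes the level sets $0$ and $1$ simultaneously via Lemma \ref{le:classical-3} (the quotient $\varphi_0/(\varphi_0+\varphi_1)$), so the saturation $g=\pm 1$ for $|x|\ge M$ is built into the two Urysohn functions, whereas you add it afterwards with a cutoff in $x$; your gluing is checked correctly and either route works.

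The one place where your write-up stops short of a proof is the discontinuous case, which is the case that actually matters for Theorem \ref{thm:increasing-is-energetic} (the $u$ there is an arbitrary left-continuous increasing function). Your construction needs $A=\{x<u(t)\}$ and $B=\{x>u(t)\}$ to be open so that ``positive exactly on $\Omega$'' delivers the strict signs, but at a jump $t_0$ the set $B$ is not open: a point $(t_0,x)$ with $u(t_0)<x<u(t_0^+)$ is a limit of points $(t,x)$ with $t\downarrow t_0$ and $x<u(t)$. The paper's fix is to apply the Urysohn lemma to $C_1=\{x\ge u(t^-)\}$ and $C_2=\{x\le u(t^+)\}$ instead, the small but genuine piece of work being the verification, via monotonicity of $u$, that these sets are closed; you would need to supply this step (equivalently, identify the closures of $\real^2\minus A$ and $\real^2\minus B$). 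Your parenthetical diagnosis is nonetheless correct and deserves to be stated plainly: continuity of $g$ forces $g=0$ on the whole vertical jump segment, so the requirement $g(t_0,x)\in(0,1]$ cannot hold on the gap $(u(t_0),u(t_0^+)]$; the paper's own $g=g_2-g_1$ indeed vanishes there (both $g_1$ and $g_2$ are zero on that segment), so the lemma is literally true only for continuous $u$, and what survives at jumps --- and what the proof of Theorem \ref{thm:increasing-is-energetic} really needs --- is minimality of $u(t)$ rather than the claimed strict inequality. Finally, your implicit-function-theorem remark about the forced degeneracy of $\partial_x g$ along the graph is a nice sanity check but not a step you have to carry out: the bump-sum construction already vanishes to infinite order on its zero set.
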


The proof of  Lemma \ref{le:classical-2} can be found in the Appendix. Now we give

\begin{proof}[Proof of Theorem \ref{thm:increasing-is-energetic}] Fixing an $x_0 \in \mathbb{R}$ and taking $g$ from Lemma \ref{le:classical-2}. We choose the energy functional $\E(t,x)$ as follows
\[ \E(t,x):= \int_{x_0}^x g(t,y)+1 \; dy.\] 
Then $\E$ is smooth and satisfies $\d_x \E(t,u(t))=-1$, $\d_x \E(t,x) \in (-1,0]$ if $x> u(t)$, and $\d_x \E(t,x) \in [-2,-1)$ if $x<u(t)$ for all $t$. Moreover, it is easy to check that $|\partial_t \E(t,x)| \le $ const. By adding a constant into $\E$ if needed, we can assume that $|\partial_t \E(t,x)| \le \lambda \E(t,x)$ for every $(t,x)$. 

We shall prove that $u$ is an energetic solution of the system $(\E,|\cdot|,u(0))$. It is known that $x(\cdot)$ is an energetic solution to the system $(\E,|\cdot|,u(0))$ if the following three conditions hold (see Proposition 5.13 \cite{AlbDeS-11}, or a simplified version in Proposition 1.4 \cite{Minh-Thesis}). 
\begin{itemize}
\item[(i)] $x(\cdot)$ is left-continuous.
\item[(ii)] $\diss(x(\cdot);[0,T]) = |x(T)-x_0|$.
\item[(iii)] For all $t \in [0,T]$, $x(t)$ minimizes the functional $x \mapsto \E(t,x) + |x-x_0|$ for $x \in \real$.
\end{itemize}
Thus it remains to check that $u$ satisfies the condition (iii). We shall use the fact that for all $t$, $\partial_x \E(t,u(t))=-1$, $\partial_x \E(t,x) \in (-1,0]$ if $x>u(t)$, and $\partial_x \E(t,x) \in [-2,-1)$ if $x<u(t)$. We distinguish two cases.

{\bf Case 1: $x>u(t)$.} By the smoothness of $\E$, we can write
\bqq
\E(t,x) = \E(t,u(t)) + \int_{u(t)}^x \d_x \E(t,z)dz >  \E(t,u(t)) + \int_{u(t)}^x (-1) = \E(t,u(t)) + u(t) -x.
\eqq

{\bf Case 2: $x<u(t)$.} Similarly to Case 1, we write
\bqq
\E(t,x) = \E(t,u(t)) - \int_{x}^{u(t)} \d_x \E(t,z)dz > \E(t,u(t)) - \int_x^{u(t)} (-1) = \E(t,u(t)) + u(t) -x.
\eqq

Thus in both cases, we have
\bqq
\E(t,x) + |x-u(0)| > \left[\E(t,u(t)) + u(t) - x\right] + \left[x-u(0)\right]  = \E(t,u(t)) + u(t)-u(0).
\eqq

In summary, $u(t)$ is the unique minimizer for the functional $x \mapsto \E(t,x) + |x-x(0)|$ over $x \in \real$. This completes the proof.
\end{proof}

\section{Proof of Theorem \ref{thm:SBV}}

In this section, we prove Theorem \ref{thm:SBV}.

\begin{proof} {\bf Step 1.} 
Thanks to Proposition 1.5 \cite{Minh-Thesis}, we can assume that $x(\cdot)$ is right-continuous. By dividing $(0,T)$ into smaller intervals if necessary, we can assume that the set
\[\{ (t,x)\in (0,T)\times \mathbb{R}\; |\; \d_x \E(t,x) \in \{-1,1\},{\partial _{xx}}\E(t,x) = {\partial _{xxx}}\E(t,x) = 0\}\]
is empty.

\text{}\\
{\bf Step 2.} Since $x(\cdot)$ is a BV function in $1$-dim which is right-continuous, there is a real-valued Radon measure $\mu$ such that 
\[ x(t) = {\rm const} + \mu((0,t])~\fall t\in [0,T] . \]
By Lebesgue Decomposition Theorem we can write
\[ \mu= f dx + \mu_s\]
where $f\in L^1$ and $\mu _s = \left. \mu  \right|_S$ with
\[ S = \left\{ {t \in (0,T)\; | \; \mathop {\lim }\limits_{h \downarrow 0} \frac{{|\mu |(t - h,t + h)}}{h} = \infty } \right\}.\]

Let $J$ be the jump set of $x(\cdot)$. We split $\mu_s$ into the Cantor part
 $\mu_c:={\left. \mu  \right|_{S \backslash J}}$ and the jump part $\mu_J:={\left. \mu  \right|_J}$. To show that $x(\cdot)$ is of $SBV$, we need to prove that $\mu_c=0$. 

\text{}\\
{\bf Step 3.} Next, we shall use the following lemmas, which will be proved later.

\begin{lemma}\label{le:msA=0} For any BV function $x:[0,T] \to \real$ which is right-continuous, the set
\[A := \left\{ {t \in (0,T)\minus J \; |\; \liminf_{h \to 0 } \left| {\frac{{x(t + {h}) - x(t)}}{{{h}}}} \right| < \infty } \right\}\]
has $|\mu_s|$-measure $0$.
\end{lemma}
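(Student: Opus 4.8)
The plan is to prove the contrapositive density statement: at $|\mu_s|$-almost every $t$ the one-sided difference quotients of $x$ actually blow up, so such a $t$ cannot lie in $A$. Since $\liminf_{h\to0}|x(t+h)-x(t)|/h$ is finite exactly when at least one of the two one-sided lower limits is finite, I would first split
\[
A\subseteq A_+\cup A_-,\qquad
A_+:=\Big\{t\in(0,T)\setminus J:\ \liminf_{h\downarrow0}\tfrac{|x(t+h)-x(t)|}{h}<\infty\Big\},
\]
and $A_-$ defined with $h\uparrow0$. It suffices to show $|\mu_s|(A_+)=0$, the case of $A_-$ being symmetric via the left intervals $(t-h,t]$. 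From the representation $x(t)=\mathrm{const}+\mu((0,t])$ we have the exact identity $x(t+h)-x(t)=\mu(I_h)$ with $I_h:=(t,t+h]$, so the whole question concerns the one-sided density of $\mu$ relative to length. I would also record that, because $A\subseteq(0,T)\setminus J$, one has $|\mu_s|(A)=|\mu_c|(A)$ (the jump part of $|\mu_s|$ is carried by $J$); hence only the non-atomic Cantor part is involved and all ratios below are eventually of the form (nonzero)$/$(positive).

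The core is three differentiation facts, each valid for $|\mu_s|$-a.e.\ $t$ and each written with the right intervals $I_h=(t,t+h]$ as $h\downarrow0$: \ (I) $\int_{I_h}|f|\,ds\big/|\mu_s|(I_h)\to0$;\ (II) $\mu_s(I_h)\big/|\mu_s|(I_h)\to\sigma(t)\in\{-1,1\}$;\ and (III) $|\mu_s|(I_h)/h\to+\infty$. Here (I) says that the absolutely continuous measure $|f|\,dx$ has Radon--Nikodym derivative $0$ with respect to the mutually singular $|\mu_s|$; (II) is the one-sided polar decomposition of $\mu_s$ (with $\sigma$ the polar density of $\mu_s$ with respect to $|\mu_s|$); and (III) says that $dx$ has derivative $0$ with respect to $|\mu_s|$. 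Granting these, at such a $t$ I would write $\mu(I_h)=\mu_s(I_h)+\int_{I_h}f\,ds$, whence
\[
|\mu(I_h)|\ \ge\ |\mu_s(I_h)|-\int_{I_h}|f|\,ds\ =\ |\mu_s|(I_h)\Big(\tfrac{|\mu_s(I_h)|}{|\mu_s|(I_h)}-\tfrac{\int_{I_h}|f|\,ds}{|\mu_s|(I_h)}\Big)\ \ge\ \tfrac12\,|\mu_s|(I_h)
\]
for all small $h>0$, using (II) (which forces $|\mu_s(I_h)|/|\mu_s|(I_h)\to1$) and (I). Dividing by $h$ and invoking (III) gives $\liminf_{h\downarrow0}|x(t+h)-x(t)|/h=+\infty$, so $t\notin A_+$. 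As this holds off a $|\mu_s|$-null set, $|\mu_s|(A_+)=0$, and symmetrically $|\mu_s|(A_-)=0$, whence $|\mu_s|(A)=0$.

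The only nontrivial input is the validity of (I)--(III) \emph{for one-sided intervals}, and this is exactly where I expect the difficulty to lie. The usual Besicovitch/Lebesgue differentiation theorem for Radon measures is phrased with centered balls, i.e.\ symmetric intervals $(t-h,t+h)$, whereas the intervals $I_h=(t,t+h]$ do not form a regular family about $t$ (the point sits at the endpoint), so a symmetric statement does not transfer directly. I would establish the one-sided version either through the classical theory of differentiation of monotone functions --- setting $v(t):=|\mu_s|((0,t])$ and controlling its one-sided Dini derivatives by Lebesgue's theorem together with the de la Vallée Poussin decomposition to obtain (III), and treating (I),(II) through the associated monotone primitives --- or directly via a one-sided Vitali covering theorem on $\mathbb{R}$: given a Radon measure and a fine cover of a set $E$ by right intervals $(t,t+h]$, one extracts a countable disjoint subfamily exhausting $E$ up to a null set, after which the three limits (I)--(III) follow by the standard comparison-of-measures argument. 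This covering/differentiation step is the crux; the assembly in the second paragraph is then elementary.
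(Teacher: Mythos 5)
Your proof is correct, but it takes a genuinely different route from the paper's. The paper reduces Lemma \ref{le:msA=0} to the stronger Lemma \ref{le:derivative-of-BV-functs} (at $|\mu_s|$-a.e.\ $t\notin J$ both one-sided derivatives exist and are both $+\infty$ or both $-\infty$) and proves that \emph{geometrically}: it parametrizes the ``complete graph'' of $x$ by an injective Lipschitz arc-length curve, uses the existence of a tangent line at $\mathcal{H}^1$-a.e.\ point of the graph, shows that $|\mu_s|$ is dominated by the projection of $\mathcal{H}^1$ restricted to the vertical part of the graph, and reads off the blow-up of the difference quotients as $1/0^{+}$ at points with vertical tangent. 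You instead argue by one-sided differentiation of measures on the line: the decomposition $\mu=f\,dx+\mu_s$, the polar density of $\mu_s$, and your facts (I)--(III) for the right intervals $(t,t+h]$. You correctly locate the crux there, and it is sound: in dimension one the right (and left) intervals do form a differentiation basis for Radon measures, either via a one-sided Vitali covering argument (overlap constant $2$ for families of intervals covering the set of their left endpoints) or via the classical Lebesgue--de la Vall\'ee Poussin theory of monotone functions applied to $t\mapsto|\mu|((0,t])$ --- the latter being essentially the content of Lemma \ref{le:derivative-of-BV-functs}, so in that reading you are re-proving the paper's key lemma by classical real analysis rather than by the graph construction. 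One small point you should make explicit: the step $|\mu(I_h)|\ge\tfrac12|\mu_s|(I_h)$ and the limit in (III) require $|\mu_s|(I_h)>0$ for all small $h>0$; this holds $|\mu_c|$-a.e.\ because the points failing it are left endpoints of gaps of the carrier of $|\mu_c|$, hence countable, and $\mu_c$ has no atoms (you gesture at this but do not prove it). In exchange for relying on the covering/differentiation machinery, your route avoids the construction of the arc-length parametrization and the tangent-line dichotomy altogether; the paper's route avoids covering theorems and yields, as a byproduct, the cleaner statement that both one-sided derivatives exist and share the same infinite sign.
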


\begin{lemma} \label{le:msB=0} Assume that the BV function $x: [0,T] \to \real$ satisfies the weak local stability (\ref{eq:w-LS}) and the energy-dissipation upper bound (\ref{eq:ED-upper}). If (H1) holds true, then the set
\[ B := \left\{ {t \in (0,T)\backslash J \; | \; \mathop {\lim }\limits_{h \to 0} \left| {\frac{{x(t + h) - x(t)}}{h}} \right| = \infty } \right\}\]
is at most countable. Therefore,  $|\mu_s|(B)=0$.
\end{lemma}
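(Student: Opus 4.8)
The plan is to show that $B$ is a \emph{discrete} subset of $(0,T)$, hence at most countable; the concluding assertion $|\mu_s|(B)=0$ then follows for free, since $B\subseteq(0,T)\minus J$ meets no atom of $\mu$, so $\mu_J(B)=0$, while the Cantor part $\mu_c$ is non-atomic and therefore assigns zero mass to the countable set $B$. Throughout I work inside the reduction of Step 1, so that on the interval under consideration the set appearing in (H1) is \emph{empty}; in particular, wherever $\d_x\E(t,x)\in\{-1,1\}$ and $\d_{xx}\E(t,x)=0$ we automatically have $\d_{xxx}\E(t,x)\neq0$.

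First I would pin down the behaviour of $\E$ along the graph at a point $t\in B$. Since $t\notin J$, $x(\cdot)$ is continuous at $t$, so weak local stability \eqref{eq:w-LS} gives $|\d_x\E(t,x(t))|\le1$. I claim that in fact $\d_x\E(t,x(t))\in\{-1,1\}$. Indeed, if $|\d_x\E(t,x(t))|<1$, then using continuity of $\d_x\E$ together with $\sup_{|s-t|<\delta}|x(s)-x(t)|\le|\mu|((t-\delta,t+\delta])\to0$, there is a neighbourhood on which $|\d_x\E|<1$; inserting the chain rule into the energy-dissipation upper bound \eqref{eq:ED-upper} then forces $\int_{(t_1,t_2]}\d_x\E(s,x(s))\,d\mu(s)\le-|\mu|((t_1,t_2])$ on small subintervals, which is incompatible with $|\d_x\E|<1$ unless $|\mu|((t_1,t_2])=0$; but then $x$ is locally constant and $t\notin B$. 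So $\d_x\E(t,x(t))=\pm1$; say $+1$.

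The heart of the matter is proving $\d_{xx}\E(t,x(t))=0$. I expect this to be the main obstacle. Suppose not. Then by the implicit function theorem the stability curve $\mathcal C=\{\d_x\E=1\}$ is locally a $C^2$ graph $x=\gamma(s)$ with $\gamma(t)=x(t)$ and \emph{finite} slope, and the sign of $\d_{xx}\E$ confines the solution to one side: stability yields $x(s)\le\gamma(s)$ (or $\ge$) for continuity points $s$ near $t$. This bounds one of the one-sided difference quotients by $\gamma'(t)$, so membership in $B$ would force the motion on the other side to occur at infinite rate while driving $x$ back up toward the curve. Feeding such an infinite-rate monotone excursion into \eqref{eq:ED-upper}, and expanding $\int_{x(t)}^{x(t+h)}\d_x\E(t,z)\,dz=\Delta+\tfrac12\d_{xx}\E(t,x(t))\,\Delta^2+O(\Delta^3)$ with $\Delta=x(t+h)-x(t)$, $|\Delta|\gg h$, against the dissipation $\ge|\Delta|$, produces a contradiction at second order. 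Hence $\d_{xx}\E(t,x(t))=0$, and by the Step 1 reduction $\d_{xxx}\E(t,x(t))\neq0$. Making the one-sided confinement and the second-order estimate rigorous for a merely BV solution is the delicate point.

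Finally I would turn this into discreteness. Fix $t_0\in B$. Since $\d_{xxx}\E(t_0,x(t_0))\neq0$, the implicit function theorem presents $\{\d_{xx}\E=0\}$ near $(t_0,x(t_0))$ as a single $C^1$ graph $x=\delta(s)$. Because $x(\cdot)$ is continuous at the single point $t_0$, we have $(t,x(t))\to(t_0,x(t_0))$ as $t\to t_0$, so for $t$ near $t_0$ the point $(t,x(t))$ lies in this neighbourhood; if moreover $t\in B$, then $(t,x(t))\in\{\d_{xx}\E=0\}$, whence $x(t)=\delta(t)$. Were there a sequence $t_n\in B$ with $t_n\to t_0$, we would get
\[
\left|\frac{x(t_n)-x(t_0)}{t_n-t_0}\right|=\left|\frac{\delta(t_n)-\delta(t_0)}{t_n-t_0}\right|\longrightarrow|\delta'(t_0)|<\infty,
\]
contradicting $t_0\in B$, for which this quotient must tend to $\infty$ along \emph{every} sequence $h=t_n-t_0\to0$. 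Thus $t_0$ is isolated in $B$; so $B$ is discrete and at most countable, and $|\mu_s|(B)=0$ as explained above.
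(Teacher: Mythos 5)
Your strategy is genuinely different from the paper's, and its central step is left as an acknowledged sketch, so there is a gap to name. The paper never proves (and does not need) your key claim that $\d_{xx}\E(t,x(t))=0$ at \emph{every} $t\in B$. Instead it notes that $B\subset(0,T)\setminus(N\cup J)$ with $N=\{x'=0\}$, and argues by cases: if $t$ is an accumulation point of $(0,T)\setminus(N\cup J)$ and $\d_{xx}\E(t,x(t))\ne0$, then the identity $\d_x\E(t_n,x(t_n))=\d_x\E(t,x(t))\in\{-1,1\}$ along a sequence $t_n\notin N\cup J$ (Lemma \ref{le:-1<=dx<=1-1-dim} plus continuity of $s\mapsto\d_x\E(s,x(s))$) and a first-order Taylor expansion give a sequence along which the difference quotient converges to $-\d_{xt}\E/\d_{xx}\E$, so $t\notin B$; if $t$ is an accumulation point of $E=\{\d_{xx}\E(\cdot,x(\cdot))=0\}$, a Taylor expansion of $\d_{xx}\E$ together with $\d_{xxx}\E\ne0$ (from the (H1) reduction) yields the finite limit $-\d_{xxt}\E/\d_{xxx}\E$. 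Hence $B$ is contained in the union of the isolated points of $(0,T)\setminus(N\cup J)$ and the isolated points of $E$, which is countable. This is weaker than your assertion that $B$ is discrete, but it suffices, and it entirely sidesteps the statement you identify as "the heart of the matter."

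The gap is exactly the step you flag: as written, the "one-sided confinement plus a second-order estimate against the dissipation" is not a proof. The claim is in fact true and can be closed cleanly, but the missing ingredient is the sign information of the paper's Lemma \ref{le:dx*x'<=0} (which uses only \eqref{eq:w-LS} and \eqref{eq:ED-upper}, hence is available here): $\limsup_{s\to t}\d_x\E(t,x(t))\cdot\frac{x(s)-x(t)}{s-t}\le0$. Say $\d_x\E(t,x(t))=1$; since $t\in B$, \emph{both} one-sided quotients must then tend to $-\infty$. If $\d_{xx}\E(t,x(t))>0$, your confinement $x(s)\le\gamma(s)$ (valid at continuity points $s$ near $t$, which suffice because the limit in the definition of $B$ is over all $h$) gives, for $s<t$, $\frac{x(s)-x(t)}{s-t}\ge\frac{\gamma(s)-\gamma(t)}{s-t}\to\gamma'(t)>-\infty$, contradicting the left limit; if $\d_{xx}\E(t,x(t))<0$, the reversed confinement contradicts the right limit. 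With that step repaired, your final discreteness argument via the implicit function theorem for $\{\d_{xx}\E=0\}$ is correct and is essentially the paper's Case 3 in different clothing. Two smaller remarks: your Step-1 derivation of $\d_x\E(t,x(t))\in\{-1,1\}$ invokes a BV chain rule whose justification at the jump and Cantor parts you do not supply; the paper obtains the same conclusion more elementarily by applying \eqref{eq:ED-upper} between $t$ and $t_n$ and Taylor-expanding $\E$, as in \eqref{eq:dx*x'<0}. And your closing measure-theoretic remark is fine, since $|\mu_s|(\{t\})=0$ for every $t\notin J$.
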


\text{}\\
{\bf Step 4.} Since $\mu_c$ is the restriction of $\mu_s$ on $(0,T) \minus J$, $\mu_c =0$ if $|\mu_s|((0,T)\minus J)=0$. Notice that $(0,T) \minus J =A \cup B$. Hence, lemmas \ref{le:msA=0} and \ref{le:msB=0} ensure that $|\mu_{s}|((0,T)\minus J)=0$. This completes the proof of Theorem \ref{thm:SBV}.
\end{proof}

It remains to verify Lemma \ref{le:msA=0} and Lemma \ref{le:msB=0}. Lemma \ref{le:msA=0}  is a general fact of BV functions, and its proof can be found in the Appendix. On the other hand, the proof of Lemma \ref{le:msB=0} is based on the following observation, which is a key property of weak solutions to rate-independent systems.

 \begin{lemma}\label{le:-1<=dx<=1-1-dim} Assume that the BV function $x:[0,T] \to \real$ satisfies the weak local stability (\ref{eq:w-LS}) and the energy-dissipation upper bound (\ref{eq:ED-upper}). Then we have 
\[\partial_{x}\E(t,x(t))\in \{-1,1\}\]
for all $t\notin J\cup {\rm int}( N \cup J) $. Here we denote by $J$ the jump set of $x(\cdot)$ and $N:=\{t\in (0,T) \;|\; x'(t)=0\}$ is the null set of the derivative of $x(\cdot)$. 
\end{lemma}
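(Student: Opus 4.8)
The plan is to argue by contradiction, using the weak local stability to obtain the bound $\le 1$ and the energy-dissipation upper bound to exclude the strict inequality $<1$. Fix $t_0\notin J\cup\mathrm{int}(N\cup J)$. Since $t_0\notin J$, the function $x$ is continuous at $t_0$, so (\ref{eq:w-LS}) already gives $|\partial_x\E(t_0,x(t_0))|\le 1$; it therefore suffices to rule out $|\partial_x\E(t_0,x(t_0))|<1$. Suppose this strict inequality holds. By continuity of $\partial_x\E$ there are $\gamma<1$, $\delta>0$ and $\rho>0$ with $|\partial_x\E(t,y)|\le\gamma$ whenever $|t-t_0|\le\delta$ and $|y-x(t_0)|\le\rho$. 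Using the continuity of $x$ at $t_0$, I can pick $h\in(0,\delta)$ so small that $|x(s)-x(t_0)|<\rho$ for all $s\in[t_0-h,t_0+h]$ (the one-sided limits $x(s^-)$ then also lie in the closed $\rho$-ball, so $|\partial_x\E|\le\gamma$ along every jump segment as well).

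The core step is a lower bound for the energy increment on $[t_0,t_0+h]$. Partitioning $t_0=\tau_0<\dots<\tau_n=t_0+h$ and writing each increment of $s\mapsto\E(s,x(s))$ as a space part plus a time part,
\[
\E(\tau_{i+1},x(\tau_{i+1}))-\E(\tau_i,x(\tau_i))
=\int_{x(\tau_i)}^{x(\tau_{i+1})}\!\!\partial_x\E(\tau_{i+1},y)\,dy
+\int_{\tau_i}^{\tau_{i+1}}\!\!\partial_t\E(s,x(\tau_i))\,ds,
\]
each space integral is bounded below by $-\gamma\,|x(\tau_{i+1})-x(\tau_i)|$ because $|\partial_x\E|\le\gamma$ on the relevant range, and their sum is $\ge-\gamma\,\diss(x;[t_0,t_0+h])$ since $\sum_i|x(\tau_{i+1})-x(\tau_i)|\le\diss(x;[t_0,t_0+h])$. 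Letting the mesh tend to $0$, the time parts converge to $\int_{t_0}^{t_0+h}\partial_t\E(s,x(s))\,ds$ by dominated convergence (the step values $x(\tau_i)$ converge to $x(s)$ for a.e.\ $s$), while the left-hand side is independent of the partition; this yields
\[
\E(t_0+h,x(t_0+h))-\E(t_0,x(t_0))\ \ge\ \int_{t_0}^{t_0+h}\partial_t\E(s,x(s))\,ds-\gamma\,\diss(x;[t_0,t_0+h]).
\]
Equivalently, one may invoke the chain rule for the composition of the $C^1$ map $\E$ with the $BV$ map $x$, which gives the same estimate once the diffuse and jump parts of $Dx$ are each controlled by $\gamma$ times their total mass.

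Comparing this lower bound with the energy-dissipation upper bound (\ref{eq:ED-upper}) on $[t_0,t_0+h]$ gives $-\gamma\,\diss(x;[t_0,t_0+h])\le-\diss(x;[t_0,t_0+h])$, i.e.\ $(1-\gamma)\,\diss(x;[t_0,t_0+h])\le0$. Since $\gamma<1$ this forces $\diss(x;[t_0,t_0+h])=0$, so $x$ is constant on $[t_0,t_0+h]$; the identical argument on $[t_0-h,t_0]$ shows $x$ is constant on $[t_0-h,t_0+h]$. Then $x'\equiv 0$ on $(t_0-h,t_0+h)$, so this open interval is contained in $N$ and hence $t_0\in\mathrm{int}(N\cup J)$, contradicting the choice of $t_0$. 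Therefore $|\partial_x\E(t_0,x(t_0))|\ge1$, and together with the weak local stability we conclude $\partial_x\E(t_0,x(t_0))\in\{-1,1\}$.

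I expect the main obstacle to be the lower bound on the energy increment, precisely because $x$ is only $BV$: the quantity $\E(t_0+h,x(t_0+h))-\E(t_0,x(t_0))$ must be estimated even in the presence of a possible Cantor part or of accumulating jumps, where no classical fundamental theorem of calculus is available. This is exactly where confining $x$ to the region $\{|\partial_x\E|\le\gamma\}$ pays off, since it lets me bound the entire singular contribution (diffuse plus jump) by $\gamma$ times the dissipation; the only remaining care is the routine passage to the limit in the time integral, for which dominated convergence suffices. Conceptually, the lemma records that strict stability $|\partial_x\E|<1$ is incompatible with any dissipation, so at every point that is a genuine accumulation of motion the force must saturate the threshold $|\partial_x\E|=1$.
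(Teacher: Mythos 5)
Your proof is correct, but it takes a genuinely different route from the paper's. The paper argues directly: for $t\notin N\cup J$ it extracts a sequence $t_n\to t$ along which the difference quotient of $x$ is bounded away from zero, applies the energy-dissipation upper bound on the interval between $t$ and $t_n$, Taylor-expands $\E$, and divides by $|x(t_n)-x(t)|$, using $\diss(x;[t,t_n])\ge|x(t_n)-x(t)|$ to obtain $\partial_x\E(t,x(t))\cdot\operatorname{sign}(x(t_n)-x(t))\le-1+o(1)$, hence $|\partial_x\E|\ge1$; the remaining points of $[0,T]\setminus(J\cup\mathrm{int}(N\cup J))$ are then reached by approximating with points of $(0,T)\setminus(N\cup J)$ and using the continuity of $s\mapsto\partial_x\E(s,x(s))$. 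You instead argue by contradiction: if $|\partial_x\E|\le\gamma<1$ on a space-time neighbourhood of $(t_0,x(t_0))$, your telescoping/partition estimate (in effect a chain-rule inequality for the composition of $\E$ with a $BV$ map) lower-bounds the energy increment by $\int\partial_t\E(s,x(s))\,ds-\gamma\,\diss$, which played against the upper bound forces $\diss=0$ near $t_0$, i.e.\ $t_0\in\mathrm{int}(N)$. Your route treats the whole complement of $\mathrm{int}(N\cup J)$ in one stroke, with no need to first produce a sequence of non-vanishing difference quotients, and it isolates the clean principle that strict stability is incompatible with any local dissipation; the price is the care required in the partition limit, which you correctly supply (monotone interval bounds for the space integrals, a.e.\ convergence of $x(\tau_{i(s)})$ to $x(s)$ off the countable jump set, dominated convergence for the time integrals). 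What the paper's pointwise argument buys in exchange is the sign refinement $\partial_x\E(t,x(t))\cdot\lim_n\operatorname{sign}\bigl((x(t_n)-x(t))/(t_n-t)\bigr)=-1$, which is reused later in Lemma \ref{le:dx*x'<=0}; your argument does not produce that extra information, but the present lemma does not require it. (Both proofs share the same harmless caveat at the endpoints $t\in\{0,T\}$, where $N\subset(0,T)$ makes the statement formally too strong; that is an issue with the statement, not with your argument.)
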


\begin{proof} {\bf Step 1.} First, we show that if $t\notin N\cup J$, then $\partial_{x}\E(t,x(t))\in \{-1,1\}.$

Since $t\notin N$, we can find a sequence $t_n\to t$ and $t_n\ne t$ such that 
\bq \label{eq:liminf|x'|>0-1-dim}
\liminf_{n\to \infty} \left| {  \frac{{x(t_n) - x(t)}}{t_n-t} } \right|>0.
\eq

{\bf Case 1.} Assume that $t_n\downarrow t$. From the energy-dissipation upper bound, one has
\[ \E (t_n, x(t_n)) - \E(t,x(t))\le  \int\limits_t^{t_n} {{\partial _t}\E(s,x(s))ds}  - \diss (x(\cdot);[t,t_n]).\]
 Using Taylor's expansion on the left-hand side and the continuity of $s\mapsto \d_t \E(s,x(s))$ on the right-hand side, we obtain
\bqq
&~&\d_t \EE \cdot (t_n-t) + \d_{x} \EE \cdot (x(t_n)-x(t)) + o(x(t_n)-x(t))+o(t_n-t) \hfill\\
&\le & (t_n-t) \cdot {{\partial _t}\E(t,x(t))}  - \diss(x(\cdot);[t,t_n]) + o(t_n-t).
\eqq
Dividing this inequality by $|x(t_n)-x(t)|$ and using (\ref{eq:liminf|x'|>0-1-dim}), we obtain  
\bq \label{eq:dx*x'<0}
\d_x \EE \cdot \frac{x(t_n)-x(t)}{|x(t_n)-x(t)|}  \le - \frac{\diss( x(\cdot);[t,t_n])}{|x(t_n)-x(t)|}+o(1) \le -1+o(1). 
\eq
Consequently, $|\d_x \EE| \ge 1$. On the other hand, $|\d_x \EE| \le 1$ by the weak local stability (\ref{eq:w-LS}). Thus $|\d_x \EE|=1$. 

{\bf Case 2.} Assume that $t_n\uparrow t$. From the energy-dissipation upper bound, one has
\[ \E (t_n, x(t_n)) - \E(t,x(t))\ge  \int\limits_t^{t_n} {{\partial _t}\E(s,x(s))ds} +\diss (x(\cdot);[t_n,t]).\]
Following the above proof, we obtain
\bqq
\d_x \EE \cdot \frac{x(t_n)-x(t)}{|x(t_n)-x(t)|}  \ge \frac{\diss( x(\cdot);[t_n,t])}{|x(t_n)-x(t)|}+o(1) \ge 1+o(1). 
\eqq
This also implies that  $|\d_x \EE|=1$.
\text{}\\\\
{\bf Step 2.} We show that if $t\notin J$ and $t\notin {\rm int}( N \cup J)$, then $\partial_{x}\E(t,x(t))\in \{-1,1\}.$

Since $t\notin {\rm int}( N \cup J)$, there exists a sequence $t_n\to t$ such that $t_n\notin N\cup J$ for all $n\ge 1$. By the previous step, $\partial_{x}\E(t_n,x(t_n))\in \{-1,1\}$ for all $n\ge 1$. Moreover, since $x(\cdot)$ is continuous at $t$, we get
\[\partial_{x}\E(t_n,x(t_n)) \to \d_{x}\EE.\]
Therefore, $\partial_{x}\EE \in \{-1,1\}.$
\end{proof}

As an easy consequence of Lemma \ref{le:-1<=dx<=1-1-dim}, we have 

\begin{lemma}\label{le:notin-N-J-E} Assume that $x:[0,T] \to \real$ has bounded variation and satisfies the weak local stability (\ref{eq:w-LS}) and the energy-dissipation upper bound (\ref{eq:ED-upper}). If $t\notin J \cup {\rm int}(N \cup J)$ and $\d_{xx}\EE\ne 0$, then for any sequence $t_n\to t$ such that $t_n\notin J\cup {\rm int}(N \cup J)$ and $t_n\ne t$ for all $n\ge 1$, one has
\[ \lim_{n\to \infty}\frac{x(t_n)-x(t)}{t_n-t}= -\frac{\d_{xt}\EE}{\d_{xx}\EE}.\]
Here $J$ is the jump set of $x(\cdot)$, and $N:=\{t\in (0,T) \;|\; x'(t)=0\}$.
\end{lemma}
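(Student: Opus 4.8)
\textbf{Proof proposal for Lemma \ref{le:notin-N-J-E}.}

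The plan is to exploit the fact that, by Lemma \ref{le:-1<=dx<=1-1-dim}, the function $t\mapsto \d_x\E(t,x(t))$ is pinned to the value $\pm 1$ at every point of the set $(0,T)\minus\big(J\cup{\rm int}(N\cup J)\big)$. Since both $t$ and the approximating points $t_n$ lie in this set, we have $\d_x\E(t_n,x(t_n))=\d_x\E(t,x(t))\in\{-1,1\}$, so the map $t\mapsto\d_x\E(t,x(t))$ is locally constant along this sequence. The derivative formula we want to prove is exactly what one obtains by differentiating the constraint $\d_x\E(t,x(t))=\text{const}$, so the whole lemma is morally an implicit-function-theorem statement; the work lies in making this rigorous when $x(\cdot)$ is only known to be BV and we only have convergence along sequences.

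First I would write, using the $C^3$ (hence certainly $C^2$) regularity of $\E$ from (H1) and a first-order Taylor expansion of $\d_x\E$ in both variables around $(t,x(t))$,
\bqq
0 = \d_x\E(t_n,x(t_n)) - \d_x\E(t,x(t)) = \d_{xt}\EE\,(t_n-t) + \d_{xx}\EE\,(x(t_n)-x(t)) + R_n,
\eqq
where the remainder satisfies $R_n = o(|t_n-t|) + o(|x(t_n)-x(t)|)$. The left-hand side is zero precisely because both endpoints carry the same value in $\{-1,1\}$, which is the crucial input. Dividing through by $(t_n-t)$ gives
\bqq
0 = \d_{xt}\EE + \d_{xx}\EE\cdot\frac{x(t_n)-x(t)}{t_n-t} + \frac{R_n}{t_n-t}.
\eqq
To conclude I must show the difference quotients $q_n:=\dfrac{x(t_n)-x(t)}{t_n-t}$ converge and that the remainder term is negligible, at which point solving for $q_n$ and using $\d_{xx}\EE\ne 0$ yields the claimed limit $-\d_{xt}\EE/\d_{xx}\EE$.

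The main obstacle is controlling $q_n$: a priori a BV function need not be differentiable, so I cannot simply assume the difference quotients converge, and the remainder $R_n$ contains the term $o(|x(t_n)-x(t)|)=o(|q_n|\,|t_n-t|)$, which is only harmless once $q_n$ is known to be bounded. My plan is therefore a boundedness-then-uniqueness argument. I would first rule out $|q_n|\to\infty$ along a subsequence: if $|q_n|\to\infty$, then dividing the Taylor identity instead by $|x(t_n)-x(t)|$ forces $\d_{xx}\EE=0$ in the limit (the $\d_{xt}$ and remainder terms vanish after this normalization), contradicting the hypothesis $\d_{xx}\EE\ne 0$. Hence $(q_n)$ is bounded, so $R_n/(t_n-t)\to 0$. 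Finally, for \emph{any} subsequential limit $q$ of the bounded sequence $(q_n)$, passing to the limit in the displayed identity gives $0=\d_{xt}\EE+\d_{xx}\EE\cdot q$, and since $\d_{xx}\EE\ne 0$ this pins down $q=-\d_{xt}\EE/\d_{xx}\EE$ uniquely; a bounded sequence all of whose subsequential limits coincide converges to that common value, which completes the proof.
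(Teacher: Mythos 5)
Your proposal is correct and follows essentially the same route as the paper: invoke Lemma \ref{le:-1<=dx<=1-1-dim} to get $\d_x \E(t_n,x(t_n))=\d_x\E(t,x(t))\in\{-1,1\}$ for large $n$ (note that membership of $t_n$ and $t$ in the good set only gives that both values lie in $\{-1,1\}$; the \emph{equality} requires the continuity of $s\mapsto\d_x\E(s,x(s))$ at $s=t$, which holds since $t\notin J$ and which the paper states explicitly while you only assert it), then Taylor-expand $\d_x\E$ to first order and divide by $t_n-t$. Your boundedness-then-uniqueness treatment of the difference quotients $q_n$ is a genuine improvement in rigor over the paper's version, which jumps directly from the Taylor identity to the limit without explaining how the $o(x(t_n)-x(t))$ remainder is absorbed; your normalization by $|x(t_n)-x(t)|$ to rule out $|q_n|\to\infty$ via the hypothesis $\d_{xx}\EE\ne 0$ is exactly the right way to close that gap.
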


\begin{proof} By Lemma \ref{le:-1<=dx<=1-1-dim} we have
$\partial_{x}\E(t,x(t))\in \{-1,1\}$ and $\partial_{x}\E(t_n,x(t_n))\in \{-1,1\}$ for all $n\ge 1$. Due to the continuity of the function $s\mapsto \d_x\E(s,x(s))$ at $s=t$, we obtain 
\[\partial_{x}\E(t_n,x(t_n))=\partial_{x}\E(t,x(t))\]
for $n$ large enough. Therefore, by Taylor's expansion, 
\bqq
0&=&\partial_{x}\E(t_n,x(t_n))-\partial_{x}\E(t,x(t))\hfill\\
&=& \partial_{xt} \E(t,x(t)) \cdot (t_n-t) + \partial_{xx} \E(t,x(t)) \cdot (x(t_n)-x(t)) + o(t_n-t)+o(x(t_n)-x(t)),
\eqq
we get
\[\mathop {\lim }\limits_{n \to \infty } \frac{{x({t_n}) - x(t)}}{{{t_n} - t}} =  - \frac{{{\partial _{xt}}\E(t,x(t))}}{{{\partial _{xx}}\E(t,x(t))}}.
\]
Here we have assumed that $\partial_{xx}\E(t,x(t))\ne 0$.
\end{proof}

Now we are able to give 
\begin{proof}[Proof of Lemma \ref{le:msB=0}] Let $J$ be the jump set of $x(\cdot)$, $N:=\{t\in (0,T) \;|\; x'(t)=0\}$  and $E=\{t\in (0,T)\;|\; \partial_{xx}\E(t,x(t))=0\}$. By Assumption (H1) and by dividing the interval $(0,T)$ to be many smaller intervals if necessary, we have that $\partial_{xxx}\E(t,x(t))\ne 0$ for any $t\in E$. For an arbitrary point $t\in (0,T)$, we have one of the following cases.

{\bf Case 1.} If $t\in N\cup J$, then $t\notin B$, by the definition of $B$. 

{\bf Case 2.} If $t$ is an accumulation point of $(0,T)\minus (N\cup J)$ and $t\notin E$, then we can find a sequence $t_n\to t$ such that $t_n\notin N \cup J $ and $t_n\ne t$ for all $n\ge 1$. By Lemma \ref{le:notin-N-J-E},
\[ \lim_{n\to \infty}\frac{x(t_n)-x(t)}{t_n-t}= -\frac{\d_{xt}\EE}{\d_{xx}\EE}.\]
Thus in this case, $t\notin B$.

{\bf Case 3.} If $t\notin J$ and $t$ is an accumulation point of $E$, then we can find a sequence $s_n\in E$, $s_n\to t$. Using Taylor's expansion again, we get
\bqq
0&=& {\partial _{xx}}\E({s_n},x({s_n}))- {\partial _{xx}}\E(t,x(t))  \hfill \\
   &=& {\partial _{xxt}}\E(t,x(t))\cdot ({s_n} - t) + {\partial _{xxx}}\E(t,x(t))\cdot (x({s_n}) - x(t))+ o({s_n} - t) + o(x({s_n}) - x(t)). 
   \eqq
Since $\partial_{xxx}\E(t,x(t))\ne 0$, we arrive at 

   \[\mathop {\lim }\limits_{n \to \infty } \frac{{x({s_n}) - x(t)}}{{{s_n} - t}} =  - \frac{{{\partial _{xxt}}\E(t,x(t))}}{{{\partial _{xxx}}\E(t,x(t))}},\]
which is a finite number. Thus $t\notin B$. 

{\bf Conclusion.} In summary, if $t\in B$, then either $t$ is an isolated point of $(0,T)\minus (N\cup J)$, or $t$ is an isolated point of $E$. Therefore, $B$ is at most countable. Since $\mu_s(\{t\})=0$ for any $t\in B\subset (0,T)\minus J$, we have $|\mu_s|(B)=0.$ This ends the proof of Lemma \ref{le:msB=0}.
\end{proof}
The proof of Theorem \ref{thm:SBV} is completed.

\section{Proof of Theorem \ref{thm:differentiable}}

In this section we shall prove Theorem \ref{thm:differentiable}. We shall always assume that $\E$ is of class $C^3$. We shall also denote by $J$ the jump set of $x(\cdot)$, 
\[N:=\{t\in (0,T) \;|\; x'(t)=0\}\]
and \[E:=\{t\in (0,T)\;|\; \partial_{xx}\E(t,x(t))=0\}.\]

\subsection{Proof of Theorem \ref{thm:differentiable} (ii)}

To prove  Theorem \ref{thm:differentiable} (ii), besides Lemma \ref{le:-1<=dx<=1-1-dim} and Lemma \ref{le:notin-N-J-E}, we need some other preliminary results. 

\begin{lemma}\label{le:dxx>=0} Assume that the BV function $x:[0,T] \to \real$ is {\em continuous} and satisfies the strong local stability (\ref{eq:s-LS}) and the energy-dissipation upper bound (\ref{eq:ED-upper}). If $t\notin {\rm int}(N)$, then $\partial_{xx}\E(t,x(t))\ge 0.$ Moreover, if $t\in E$, then $\partial_{x}\E(t,x(t))\cdot \partial_{xxx}\E(t,x(t))\le 0.$
\end{lemma}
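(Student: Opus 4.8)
The plan is to read off the required curvature information directly from the one-sided optimality contained in the strong local stability (\ref{eq:s-LS}). Since $x(\cdot)$ is continuous, its jump set is empty, so (\ref{eq:s-LS}) asserts that $z=x(t)$ is a local minimizer of $\Phi_t(z):=\E(t,z)+|z-x(t)|$ for \emph{every} $t$. In particular strong local stability implies the weak local stability (\ref{eq:w-LS}), so for $t\notin\mathrm{int}(N)$ Lemma \ref{le:-1<=dx<=1-1-dim} applies and gives $\partial_x\E(t,x(t))\in\{-1,1\}$. Fixing such a $t$ and writing $a:=x(t)$, I would Taylor-expand $\Phi_t$ about $a$ on each side using the $C^3$ regularity: for $z>a$ one gets $\Phi_t(z)=\E(t,a)+(\partial_x\E+1)(z-a)+\tfrac{1}{2}\partial_{xx}\E\,(z-a)^2+\tfrac{1}{6}\partial_{xxx}\E\,(z-a)^3+o(|z-a|^3)$, and for $z<a$ the same expansion holds with the linear coefficient $(\partial_x\E+1)$ replaced by $(\partial_x\E-1)$, all derivatives being evaluated at $(t,a)$.

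For the first assertion, assume $\partial_x\E(t,a)=1$ (the value $-1$ is handled symmetrically by working on the opposite side). On the side $z<a$ the linear coefficient $\partial_x\E-1$ vanishes, so the sign of $\Phi_t(z)-\E(t,a)$ is governed by the quadratic term; local minimality forces $\tfrac{1}{2}\partial_{xx}\E\,(z-a)^2\ge 0$ for $z<a$ near $a$, whence $\partial_{xx}\E(t,a)\ge 0$. For the ``moreover'' claim I add the hypothesis $t\in E$, i.e. $\partial_{xx}\E(t,a)=0$, so that on the side $z<a$ both the linear and the quadratic coefficients vanish and the cubic term decides the sign. Since $(z-a)^3<0$ for $z<a$, local minimality now forces $\partial_{xxx}\E(t,a)\le 0$, and together with $\partial_x\E(t,a)=1>0$ this yields $\partial_x\E\cdot\partial_{xxx}\E\le 0$. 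If instead $\partial_x\E(t,a)=-1$, the same reasoning on the side $z>a$ (where $(z-a)^3>0$) gives $\partial_{xxx}\E(t,a)\ge 0$, and again the product is $\le 0$.

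The computations are routine one-sided Taylor expansions; the only point that needs care is bookkeeping the correct side and the sign of $(z-a)^3$. The genuine structural input — and the main thing to get right — is that the whole argument relies on one of the one-sided linear coefficients $\partial_x\E\pm1$ being exactly zero, which is precisely why $\partial_x\E(t,a)\in\{-1,1\}$, i.e. $t\notin\mathrm{int}(N)$ via Lemma \ref{le:-1<=dx<=1-1-dim}, is required: when $|\partial_x\E|<1$ the corner of $|z-x(t)|$ makes $a$ a strict local minimizer and yields no second- or third-order information (indeed on $\mathrm{int}(N)$, where $x$ is locally constant, the sign relation can genuinely fail). I would therefore carry the hypothesis $t\notin\mathrm{int}(N)$ through both assertions, reading the ``moreover, if $t\in E$'' clause as adding $\partial_{xx}\E(t,a)=0$ to that standing hypothesis.
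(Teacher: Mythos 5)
Your proof is correct and follows essentially the same route as the paper: invoke Lemma \ref{le:-1<=dx<=1-1-dim} to get $\partial_x\E(t,x(t))\in\{-1,1\}$, then read off the signs of $\partial_{xx}\E$ and (when $t\in E$) $\partial_{xxx}\E$ from one-sided Taylor expansions of $z\mapsto\E(t,z)+|z-x(t)|$ on the side where the corner term cancels the linear term. Your closing observation that the hypothesis $t\notin\mathrm{int}(N)$ must be carried through the ``moreover'' clause is also faithful to the paper, whose Step 2 uses $\partial_x\E(t,x(t))=\pm 1$ in exactly the same implicit way.
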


\begin{proof} {\bf Step 1.} By Lemma \ref{le:-1<=dx<=1-1-dim}, we have $\partial_{x}\E(t,x(t))\in \{-1,1\} \fall t \notin {\rm int}(N)$. On the other hand, from the strong local  stability (\ref{eq:s-LS}), by using Taylor's expansion for $\E(t,\cdot)$ up to the second order, we can write
\bq \label{eq:dxx>=0-a}
 \EE &\le & \E(t,x(t))+|z-x(t)|+\partial_{x} \EE \cdot (z-x(t))\nn\hfill\\
&~&+ \partial_{xx} \EE \cdot \frac{(z-x(t))^2}{2} + o(|z-x(t)|^2)
\eq
for $z$ near $x(t)$. If $\partial_{x}\E(t,x(t))=-1$, then taking a sequence $z_n \downarrow x(t)$ in (\ref{eq:dxx>=0-a}) we get $\partial_{xx} \EE \ge 0$. If $\partial_{x}\E(t,x(t))=1$, then taking a sequence $z_n \uparrow x(t)$ in (\ref{eq:dxx>=0-a}), we also get $\partial_{xx} \EE \ge 0$. 

\text{}\\
{\bf Step 2.} Now assuming $\partial_{xx}\E(t,x(t))= 0$, we shall prove that $\partial_{x}\E(t,x(t))\cdot \partial_{xxx}\E(t,x(t))\le 0$. Using the above stability and Taylor's expansion for $\E(t,\cdot)$ up to the third order, we get
\bq \label{eq:dxx>=0-b}
 \EE &\le & \E(t,x(t))+|x-x(t)|+\partial_{x} \EE \cdot (x-x(t))\nn\hfill\\
&~&+ \partial_{xxx} \EE \cdot \frac{(x-x(t))^3}{6} + o(|x-x(t)|^3).
\eq
If $\partial_{x}\E(t,x(t))=-1$, then taking a sequence $x_n \downarrow x(t)$ in (\ref{eq:dxx>=0-b}) we get $\partial_{xxx} \EE \ge 0$. If $\partial_{x}\E(t,x(t))=1$, then taking a sequence $x_n \uparrow x(t)$ in (\ref{eq:dxx>=0-a}), we get $\partial_{xxx} \EE \le 0$. Thus in both cases, we always have $\partial_{x}\E(t,x(t))\cdot \partial_{xxx}\E(t,x(t))\le 0.$
\end{proof}

\begin{lemma}\label{le:dx*x'<=0} Assume that the BV function $x:[0,T] \to \real $ satisfies the weak local stability (\ref{eq:w-LS}) and the energy-dissipation upper bound (\ref{eq:ED-upper}). Then for all $t\in (0,T)\minus J$, one has 
\[\limsup_{s\to t}  \left\{ { {\partial_{x} \EE \cdot \frac{x(s)-x(t)}{s-t}} } \right\} \le 0.\]
Here $J$ is the jump set of $x(\cdot)$.
\end{lemma}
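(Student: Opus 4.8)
```latex
The plan is to prove the inequality $\limsup_{s\to t} \left\{ \partial_x \EE \cdot \frac{x(s)-x(t)}{s-t} \right\} \le 0$ by a direct Taylor expansion of the energy-dissipation upper bound (\ref{eq:ED-upper}), treating the two one-sided limits separately and showing each has nonpositive $\limsup$. This is essentially the same computation that produced (\ref{eq:dx*x'<0}) in the proof of Lemma \ref{le:-1<=dx<=1-1-dim}, except that there we divided by $|x(t_n)-x(t)|$ to extract a lower bound on $|\partial_x \EE|$, whereas here I would keep the signed quantity $x(s)-x(t)$ and divide by $s-t$ to retain the one-sided structure.

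First I would fix $t \notin J$, so that $x(\cdot)$ is continuous at $t$. For the right limit $s \downarrow t$, the energy-dissipation upper bound (\ref{eq:ED-upper}) with $t_1=t$, $t_2=s$ gives $\E(s,x(s)) - \EE \le \int_t^s \partial_t \E(r,x(r))\,dr - \diss(x(\cdot);[t,s])$. Expanding the left side by Taylor's theorem and using continuity of $r \mapsto \partial_t\E(r,x(r))$ on the right, the $\partial_t\EE\cdot(s-t)$ terms cancel, leaving
\[
\partial_x \EE \cdot (x(s)-x(t)) \le -\diss(x(\cdot);[t,s]) + o(s-t) + o(x(s)-x(t)).
\]
Since $\diss(x(\cdot);[t,s]) \ge |x(s)-x(t)| \ge 0$, dividing by $s-t > 0$ yields $\partial_x \EE \cdot \frac{x(s)-x(t)}{s-t} \le \frac{o(s-t)+o(x(s)-x(t))}{s-t}$. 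For the left limit $s \uparrow t$ the inequality in (\ref{eq:ED-upper}) reverses sign relative to $s-t<0$, and after dividing by the negative quantity $s-t$ one again obtains the bound dominated by $\diss \ge 0$; the net effect is the same sign.

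The main obstacle is controlling the error term $o(x(s)-x(t))$ after division by $s-t$, since a priori $\frac{x(s)-x(t)}{s-t}$ need not be bounded (indeed $t$ could lie in the set $B$ where this ratio blows up). To handle this cleanly I would argue along any sequence $s_n \to t$ realizing the $\limsup$. If $\frac{x(s_n)-x(t)}{s_n-t}$ stays bounded, then $o(x(s_n)-x(t))/(s_n-t) \to 0$ and the bound gives a nonpositive limit directly. If $\left|\frac{x(s_n)-x(t)}{s_n-t}\right| \to \infty$, I would instead divide the expanded inequality by $|x(s_n)-x(t)|$ (as in (\ref{eq:dx*x'<0})), absorb the $o(s_n-t)/|x(s_n)-x(t)|$ error using that $|x(s_n)-x(t)| \gg |s_n-t|$, and conclude $\partial_x \EE \cdot \mathrm{sign}(x(s_n)-x(t)) \le -1 + o(1)$; multiplying back by $\frac{|x(s_n)-x(t)|}{s_n-t}$, whose sign matches that of $\frac{x(s_n)-x(t)}{s_n-t}$, shows the product tends to $-\infty$ or stays nonpositive. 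Either way the $\limsup$ is $\le 0$, which completes the argument.
```
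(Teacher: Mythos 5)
Your argument is correct and follows essentially the same route as the paper: Taylor-expanding the energy-dissipation upper bound, using $\diss(x(\cdot);[t,s]) \ge |x(s)-x(t)|$, and then controlling the difference quotient --- the paper handles the bounded and unbounded regimes at once by combining the resulting bound $\partial_x \EE \cdot \frac{x(t_n)-x(t)}{|x(t_n)-x(t)|} \le -1 + o(1)$ with the weak local stability $|\partial_x \EE| \le 1$ to get $\partial_x \EE \cdot \sign\bigl(\frac{x(t_n)-x(t)}{t_n-t}\bigr) = -1$, so the product equals $-\bigl|\frac{x(t_n)-x(t)}{t_n-t}\bigr| \le 0$, whereas you treat the bounded case by a direct error estimate. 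One inessential slip: $\frac{|x(s_n)-x(t)|}{s_n-t}$ has the sign of $s_n-t$, not of $\frac{x(s_n)-x(t)}{s_n-t}$, but your factorization and conclusion are unaffected.
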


\begin{proof} We shall show that for any sequence $t_n\to t$ and $t_n\ne t$ then
\[\limsup_{n\to \infty}  \left\{ { {\partial_{x} \EE \cdot \frac{x(t_n)-x(t)}{t_n-t}} } \right\} \le 0.\]
Of course, we may assume that 
\bqq 
\liminf_{n\to \infty} \left| {  \frac{{x(t_n) - x(t)}}{t_n-t} } \right|>0
\eqq
and either $t_n\downarrow t$ or $t_n\uparrow t$.

{\bf Case 1.} If $t_n\downarrow t$, then repeat the argument in the proof of Lemma \ref{le:-1<=dx<=1-1-dim}, we obtain again the inequality (\ref{eq:dx*x'<0}) 
\[ \partial_x \E(t,x(t))\cdot \frac{x(t_n)-x(t)}{|x(t_n)-x(t)|} \le -1 + o(1),\]
and either $\partial_x \E(t,x(t))=1$ or $\partial_x \E(t,x(t))=-1$.
This implies that
\[ \d_x \EE \cdot \lim_{n\to \infty}  \frac{x(t_n)-x(t)}{|x(t_n)-x(t)|}  =-1.\]
or
\[\d_x \EE \cdot \lim_{n\to \infty} \sign \left( {  \frac{x(t_n)-x(t)}{t_n-t}  } \right)=-1.\]

{\bf Case 2.} If $t_n\uparrow t$, then similarly, one has
\[\d_x \EE \cdot  \lim_{n\to \infty} \frac{x(t_n)-x(t)}{|x(t_n)-x(t)|}  =1,\]
and hence
\[\d_x \EE \cdot  \lim_{n\to \infty} \sign \left( {  \frac{x(t_n)-x(t)}{t_n-t}  } \right)=-1.\]

Thus in all cases, we have
\[\d_x \EE \cdot  \lim_{n\to \infty} \sign \left( {  \frac{x(t_n)-x(t)}{t_n-t}  } \right)=-1.\]
and the conclusion follows. 
\end{proof}

\begin{lemma}\label{le:dxt=0} Assume that the BV function $x:[0,T] \to \real$ is {\em continuous} and satisfies the strong local stability (\ref{eq:s-LS}) and the energy-dissipation upper bound (\ref{eq:ED-upper}). Let $t\in {\rm int}[(0,T)\minus {\rm int}(N) ]$ such that $\partial_{xx}\E(t,x(t))=0$ and $\d_{xxx}\E(t,x(t)) \ne 0$. Then $\partial_{xt}\EE=0$. 
\end{lemma}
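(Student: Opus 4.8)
The plan is to exploit the continuity of $x(\cdot)$ in order to show that the scalar function $s\mapsto \partial_x\E(s,x(s))$ is \emph{constant} near $t$, and then to expand this relation in the spirit of Lemma~\ref{le:notin-N-J-E}, combining its first- and second-order terms to reach a sign contradiction. Since $x(\cdot)$ is continuous we have $J=\emptyset$, and since $t\in {\rm int}[(0,T)\minus {\rm int}(N)]$ there is a whole neighbourhood $U=(t-\eps,t+\eps)$ with $U\cap {\rm int}(N)=\emptyset$. Hence Lemma~\ref{le:-1<=dx<=1-1-dim} applies at every $s\in U$, giving $\partial_x\E(s,x(s))\in\{-1,1\}$. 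The map $s\mapsto \partial_x\E(s,x(s))$ is continuous (as $\E\in C^3$ and $x(\cdot)$ is continuous) and takes values in the two-point set $\{-1,1\}$, so it is constant on $U$; denote this constant by $\sigma=\partial_x\EE$.

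Next I would argue by contradiction, assuming $\partial_{xt}\EE\neq 0$, and write the identity $\partial_x\E(s,x(s))-\partial_x\E(t,x(t))=0$ for $s\in U$. Expanding to first order and using $\partial_{xx}\EE=0$ gives
\[
0=\partial_{xt}\EE\,(s-t)+o\big(|s-t|+|x(s)-x(t)|\big)\qquad (s\to t),
\]
and since $\partial_{xt}\EE\neq0$ this first forces $|s-t|\le |x(s)-x(t)|$ for $s$ close to $t$, and then the sharper bound $|s-t|=o(|x(s)-x(t)|)$; in particular $x(s)\neq x(t)$ whenever $0<|s-t|$ is small.

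Then I would expand the \emph{same} identity to second order. As $\E\in C^3$ the Peano remainder is $o((s-t)^2+(x(s)-x(t))^2)$; using $\partial_{xx}\EE=0$ together with $|s-t|=o(|x(s)-x(t)|)$, every second-order term except $\tfrac12\partial_{xxx}\EE\,(x(s)-x(t))^2$, as well as the remainder, is $o((x(s)-x(t))^2)$, so that
\[
\partial_{xt}\EE\,(s-t)=-\tfrac12\,\partial_{xxx}\EE\,(x(s)-x(t))^2+o\big((x(s)-x(t))^2\big).
\]
For $s\neq t$ close to $t$ the right-hand side equals $(x(s)-x(t))^2\big(-\tfrac12\partial_{xxx}\EE+o(1)\big)$, which has the fixed sign of $-\partial_{xxx}\EE$ (recall $(x(s)-x(t))^2>0$ and $\partial_{xxx}\EE\neq0$); hence $\partial_{xt}\EE\,(s-t)$ keeps a constant sign, and as $\partial_{xt}\EE\neq0$ is a fixed nonzero number this means $s-t$ has a constant sign for all such $s$. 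This is absurd, since $s$ ranges over both sides of $t$ in $U$. Therefore $\partial_{xt}\EE=0$.

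I expect the main obstacle to be the middle step: converting the hypothesis $\partial_{xt}\EE\neq0$ into the asymptotic $|s-t|=o(|x(s)-x(t)|)$ and checking $x(s)\neq x(t)$ near $t$, since this is exactly what lets the second-order expansion collapse onto the single term $\tfrac12\partial_{xxx}\EE\,(x(s)-x(t))^2$ and makes the sign argument legitimate. Equivalently, and perhaps more transparently, one may invoke the implicit function theorem: as $\partial_{xt}\EE\neq0$, the level set $\{\partial_x\E=\sigma\}$ is locally a graph $s=h(y)$ near $(t,x(t))$ with $h(x(t))=t$, $h'(x(t))=-\partial_{xx}\EE/\partial_{xt}\EE=0$ and $h''(x(t))=-\partial_{xxx}\EE/\partial_{xt}\EE\neq0$; then $h$ has a strict local extremum at $x(t)$, so the graph $(s,x(s))$, which must lie on this level set by the constancy established above, stays on one side of $\{s=t\}$, contradicting that $s$ takes values on both sides of $t$.
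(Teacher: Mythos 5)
Your proof is correct, but it takes a genuinely different route from the paper's. Both arguments start from the same observation: by Lemma \ref{le:-1<=dx<=1-1-dim} and continuity, $s\mapsto\partial_x\E(s,x(s))$ is locally constant with value in $\{-1,1\}$ near $t$, and expanding this identity to first order (using $\partial_{xx}\EE=0$) shows that $\partial_{xt}\EE=0$ would follow from any sequence $s_n\to t$ along which the difference quotients stay bounded. The paper then actually produces such a sequence: it invokes Lemma \ref{le:dxx>=0} (the inequality $\partial_{xx}\E(s,x(s))\ge 0$ along the trajectory and the resulting sign of $\partial_{xxx}\EE$, both consequences of the strong local stability) together with Lemma \ref{le:dx*x'<=0} to trap the one-sided quotients $\frac{x(s_n)-x(t)}{s_n-t}$ for $s_n\uparrow t$ between $0$ and $-\partial_{xxt}\EE/\partial_{xxx}\EE$. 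You instead stay entirely inside the constancy identity and argue by contradiction: $\partial_{xt}\EE\ne 0$ forces $|s-t|=o(|x(s)-x(t)|)$ (and $x(s)\ne x(t)$ for $s\ne t$ nearby), which collapses the second-order expansion onto the single term $\tfrac12\partial_{xxx}\EE\,(x(s)-x(t))^2$, so that $\partial_{xt}\EE\,(s-t)$ would have a fixed sign, contradicting the fact that $s$ ranges over both sides of $t$; your implicit-function-theorem reformulation is the same point stated geometrically. Your route is more self-contained and slightly stronger: it bypasses Lemmas \ref{le:dxx>=0} and \ref{le:dx*x'<=0} altogether, hence needs only the weak local stability (\ref{eq:w-LS}) (via Lemma \ref{le:-1<=dx<=1-1-dim}) rather than the strong one (\ref{eq:s-LS}), and it never has to determine the sign of $\partial_{xxx}\EE$. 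What the paper's route buys in exchange is the explicit one-sided bounds on the difference quotients, which are reused in essentially the same form in Lemma \ref{le:tn-notin-N0-t-notin-E}; your argument yields the conclusion but not those quantitative estimates.
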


\begin{proof} 

{\bf Step 1.} Take an arbitrary sequence $t_n\to t$, $t_n\ne t$, $t_n \in {\rm int}[(0,T)\minus {\rm int}(N)]$. By Lemma \ref{le:-1<=dx<=1-1-dim} and the continuity of the function $s\mapsto \partial_x \E(s,x(s))$, we have 
\[ \d_x \E(t_n,x(t_n))=\d_x \EE \in \{-1,1\}\]
for all $n$ large enough. Using Taylor's expansion and the assumption $\d_{xx}\EE=0$, we have
\bq \label{eq:E-dxt=0-a}
0&=&\d_x \E(t_n,x(t_n))-\d_x \EE\nn\hfill\\
&=&\d_{xt}\EE \cdot (t_n-t)+o(x(t_n)-x(t)))+o(t_n-t).
\eq
Thus we can conclude that $\partial_{xt}\EE=0$ if we can find a sequence $t_n\to t$ such that
\[ \limsup_{n\to \infty} \frac{|x(t_n)-x(t)|}{|t_n-t|}<\infty.\]

\text{}\\
{\bf Step 2.} For an arbitrary sequence $s_n\to t$, $s_n\ne t$, $s_n \in {\rm int}[(0,T)\minus {\rm int}(N)]$, by Lemma \ref{le:dxx>=0} we have 
\[\d_{xx} \E(s_n,x(s_n))\ge 0=\d_{xx}\E(t,x(t)).\]
Therefore, using Taylor's expansion we obtain
\bqq 
0&\le & \d_{xx} \E(s_n,x(s_n))-\d_{xx}\E(t,x(t)) \nn\hfill\\
&=& \d_{xxt}\EE \cdot (s_n-t)+ \d_{xxx} \EE \cdot (x(s_n)-x(t))+ o(x(s_n)-x(t))+o(s_n-t).
\eqq
Choosing $s_n\uparrow t$ and dividing the above inequality for $(s_n-t)<0$, we have
\bq \label{eq:E-dxt=0-b}
0\ge \d_{xxt}\EE + (\d_{xxx} \EE +o(1))\cdot \frac{x(s_n)-x(t)}{s_n-t}+o(1).
\eq

\text{}\\
{\bf Step 3.} Since $\partial_x \EE \in \{-1,1\}$, we distinguish two cases.

{\bf Case 1.} Assume $\partial_x \EE =-1$. Then $\d_{xxx}\EE >0$ by Lemma \ref{le:dxx>=0}. Therefore, (\ref{eq:E-dxt=0-b}) implies that
\[ \limsup_{n\to \infty} \frac{x(s_n)-x(t)}{s_n-t}\le -\frac{\d_{xxt}\EE}{\d_{xxx}\EE}<\infty.\]
On the other hand, by Lemma \ref{le:dx*x'<=0},
\[\liminf_{n\to \infty}  \left\{ { {\frac{x(s_n)-x(t)}{s_n-t}} } \right\} \ge 0.\]
Therefore, 
\[\limsup_{n\to \infty} \frac{|x(s_n)-x(t)|}{|s_n-t|}< \infty.\]

{\bf Case 2.} Assume $\partial_x \EE =1$. Similarly, we have $\d_{xxx}\EE <0$ by Lemma \ref{le:dxx>=0}, and hence
\[ \liminf_{n\to \infty} \frac{x(s_n)-x(t)}{s_n-t}\ge -\frac{\d_{xxt}\EE}{\d_{xxx}\EE}>-\infty.\]
Moreover, by Lemma \ref{le:dx*x'<=0},
\[ \limsup_{n\to \infty}  \left\{ { {\frac{x(s_n)-x(t)}{s_n-t}} } \right\} \le 0.\]
Thus 
\[ \limsup_{n\to \infty}  \frac{|x(s_n)-x(t)|}{|s_n-t|}< \infty.\]

\text{}\\
{\bf Step 4.} In summary, if $s_n\uparrow t$, then we always have
\[\limsup_{n\to \infty} \frac{|x(s_n)-x(t)|}{|s_n-t|}< \infty.\]
Therefore, choosing $t_n=s_n$ in (\ref{eq:E-dxt=0-a}), we conclude that $\d_{xt}\EE=0.$
\end{proof}

\begin{lemma}\label{le:dN0-dxt=0} Assume that the BV function $x:[0,T]\to \R$ is {\em continuous} and satisfies the strong local stability (\ref{eq:s-LS}) and the energy-dissipation upper bound (\ref{eq:ED-upper}). If $t\in (0,T)$ is an accumulation point of $\partial \N0 $, then $\d_{xt}\EE=0.$ Moreover, if $\partial_{xx}\E(t,x(t))\ne 0$, then $x'(t)=0$ and $\d_{xtt}\EE=0$.

Here for convenience, we denote by  $\d \N0 $ the boundary of ${\rm int}(N)$.
\end{lemma}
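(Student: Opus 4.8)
The plan is to exploit the constancy intervals of $x$ together with the mean value theorem. Since $x$ is continuous its jump set $J$ is empty, and since $\partial\N0$ is closed the accumulation point $t$ lies in $\partial\N0$; in particular $t\notin{\rm int}(N)$, so Lemma \ref{le:-1<=dx<=1-1-dim} gives $\d_x\EE\in\{-1,1\}$. Now ${\rm int}(N)$ is open and $x'\equiv 0$ there, so each connected component is an interval on which $x$ is constant. Because $t$ is an accumulation point of $\partial\N0$, that is, of the endpoints of these components, and because disjoint intervals in $(0,T)$ force both endpoints of the relevant components to converge to $t$, I obtain infinitely many components $(\alpha_n,\beta_n)$ with $\alpha_n,\beta_n\to t$, on each of which $x\equiv c_n$; by continuity $x(\alpha_n)=x(\beta_n)=c_n$ and $c_n\to x(t)$.

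First I would prove $\d_{xt}\EE=0$. The endpoints lie in $\partial\N0$, hence outside ${\rm int}(N)$, so Lemma \ref{le:-1<=dx<=1-1-dim} gives $\d_x\E(\alpha_n,c_n),\d_x\E(\beta_n,c_n)\in\{-1,1\}$; since $s\mapsto\d_x\E(s,x(s))$ is continuous and equals $\d_x\EE$ at $t$, for $n$ large both equal $\d_x\EE$. Thus the $C^2$ function $s\mapsto\d_x\E(s,c_n)$ takes the same value at $\alpha_n$ and $\beta_n$, and the mean value theorem yields $\xi_n\in(\alpha_n,\beta_n)$ with $\d_{xt}\E(\xi_n,c_n)=0$. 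Letting $n\to\infty$ gives $\xi_n\to t$, $c_n\to x(t)$, and continuity of $\d_{xt}\E$ yields $\d_{xt}\EE=0$. This mean value trick, pairing the two endpoints of a constancy interval which share the same $x$-value and the same unit value of $\d_x\E$, is the crux of the argument, and I expect it to be the step least obvious to reconstruct.

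For the second assertion, suppose $\d_{xx}\EE\ne 0$. Then Lemma \ref{le:notin-N-J-E} applies at $t$ (as $t\notin{\rm int}(N)$) and, using $\d_{xt}\EE=0$, shows $\frac{x(s)-x(t)}{s-t}\to 0$ along every sequence $s\to t$ avoiding ${\rm int}(N)$. To upgrade this to $x'(t)=0$ I would treat $s\in{\rm int}(N)$ by replacing $s$ with the endpoint $e$ of its component lying between $s$ and $t$: then $e\notin{\rm int}(N)$, $x(s)=x(e)$ and $|e-t|\le|s-t|$, so $\big|\frac{x(s)-x(t)}{s-t}\big|\le\big|\frac{x(e)-x(t)}{e-t}\big|\to 0$ (and if $t$ is itself that endpoint the quotient vanishes). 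Hence $x'(t)=0$.

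Finally, to obtain $\d_{xtt}\EE=0$ I would reuse the points $\xi_n$, noting that $\xi_n\in{\rm int}(N)$ so $x(\xi_n)=c_n$ and $\d_{xt}\E(\xi_n,x(\xi_n))=0$. A first-order Taylor expansion of $\d_{xt}\E$ about $(t,x(t))$, combined with $\d_{xt}\EE=0$, gives after dividing by $\xi_n-t$
\[
0=\d_{xtt}\EE+\d_{xxt}\EE\cdot\frac{x(\xi_n)-x(t)}{\xi_n-t}+o(1)\Big(1+\Big|\frac{x(\xi_n)-x(t)}{\xi_n-t}\Big|\Big).
\]
Since $\xi_n\to t$ and $x'(t)=0$, the difference quotient tends to $0$, so the whole right-hand side collapses to $\d_{xtt}\EE$, yielding $\d_{xtt}\EE=0$.
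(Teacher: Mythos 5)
Your proof is correct. The first two claims are handled exactly as in the paper: the same Rolle/mean-value argument on $f_n(s)=\d_x\E(s,c_n)$ over constancy intervals $(\alpha_n,\beta_n)$ whose endpoints accumulate at $t$ (after fixing their common value of $\d_x\E$ via Lemma \ref{le:-1<=dx<=1-1-dim} and continuity), and the same reduction of $x'(t)=0$ to Lemma \ref{le:notin-N-J-E} by projecting points of ${\rm int}(N)$ onto the nearer endpoint of their component. Where you genuinely diverge is the last claim, $\d_{xtt}\EE=0$: the paper does \emph{not} use $x'(t)=0$ there; instead it invokes Lemma \ref{le:dx*x'<=0} to get the sign condition $\d_{xt}\E(s,x(s))\cdot\d_x\E(s,x(s))\ge 0$ near $t$, deduces that $g(s)=\d_{xt}\E(s,x(s))$ is $\le 0$ at $a_n,b_n$ but vanishes at an interior point, hence has an interior local maximizer $s_n$ with $g'(s_n)=\d_{xtt}\E(s_n,x(s_n))=0$, and passes to the limit. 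Your alternative -- a first-order Taylor expansion of $\d_{xt}\E$ at $(t,x(t))$ evaluated at the Rolle points $\xi_n$, killing the $\d_{xxt}$ term because the difference quotient tends to $x'(t)=0$ -- is valid (the remainder is $o(1)(1+|{\rm quotient}|)=o(1)$ since the quotient is bounded), and it is shorter and avoids Lemma \ref{le:dx*x'<=0} and the case split on the sign of $\d_x\EE$; its only cost is that it is tied to the hypothesis $\d_{xx}\EE\ne0$ through its reliance on $x'(t)=0$, whereas the paper's maximizer argument is structured so that the analogous reasoning can be recycled in Lemma \ref{le:accum-N0-dxtt=0} for the case $t\in E$.
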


\begin{proof} {\bf Step 1.} Since $t$ is an accumulation point of $\d \N0 $, we can find $a_n\to t$, $b_n\to t$ such that $(a_n,b_n)\subset {\rm int}(N) $ and $a_n,b_n\in \d \N0 $. By Lemma \ref{le:-1<=dx<=1-1-dim}, and the continuity of $s\mapsto \d_{x}\E(s, x(s))$, one has, for $n$ large enough,
\[\d_{x}\E(a_{n}, x(a_{n}))=\d_x \EE=\d_{x}\E(b_{n}, x(b_{n}))\in \{-1,1\}.\]
 
Note that for all $s\in [a_{n},b_{n}]$, $x(s)=c_n$, a constant independent of $s$. Consider the one-variable function 
\[s\mapsto f_n(s):=\d_x \E(s,c_n).\]
Since $f_n(a_{n})=f_n(b_{n})$, by Rolle's Theorem, we can find a number $s_n\in (a_{n},b_{n})$ such that $f_n'(s_n)=0$. This means $\d_{xt}\E(s_n,x(s_n))=0$. Since $s_n\to t$, one has 
\[0=\d_{xt}\E(s_n,x(s_n))\to \d_{xt}\E(t,x(t)).\]
\text{}\\
{\bf Step 2.} Now we assume that $t\notin E$. We distinguish two cases.

{\bf Case 1}. Let $t_n\notin {\rm int}(N) $, $t_n\ne t$ and $t_n\to t$. Then by Lemma \ref{le:notin-N-J-E} we have 
\[ \lim_{n\to \infty} \frac{x(t_n)-x(t)}{t_n-t}= -\frac{\d_{xt}\EE}{\d_{xx}\EE}=0.\]

{\bf Case 2}. Let $s_n\in {\rm int}(N) $ and $s_n\to t$. Since $t$ is an accumulation point of $\d \N0 $,  we can assume that $s_n\in (a_n,b_n)\subset {\rm int}(N) $ with $a_n,b_n\in \d \N0 $. Using Case 1, one has
\[ \lim_{n\to \infty}\frac{x(a_n)-x(t)}{a_n-t}=\lim_{n\to \infty}\frac{x(b_n)-x(t)}{b_n-t}=0.\]
On the other hand, since $x'(s)=0$ when $s \in (a_n,b_n)$, we have $x(s_n)=x(a_n)=x(b_n)$. Therefore,
\[\left| {\frac{{x({s_n}) - x(t)}}{{{s_n} - t}}} \right| \leqslant \max \left\{ {\left| {\frac{{x({a_n}) - x(t)}}{{{a_n} - t}}} \right|,\left| {\frac{{x({b_n}) - x(t)}}{{{b_n} - t}}} \right|} \right\} \to 0\]
as $n\to \infty$.

Thus in summary, for any sequence $t_n\to t$ and $t_n\ne t$ we always have
\[ \lim_{n\to \infty} \frac{x(t_n)-x(t)}{t_n-t}\to -\frac{\d_{xt}\EE}{\d_{xx}\EE}=0.\]
This means $x'(t)=0$.
\\\text{}\\
{\bf Step 3.} Now we show that if we assume furthermore that $t\notin E$, then $\d_{xtt}\EE=0.$ 

Since $t\notin E$ and the function $s\mapsto \d_{xx}\E(s,x(s))$ is continuous at $s=t$, we have $s\notin E$ if $s$ is in a neighborhood of $t$ (recall that $E$ is closed). In particular, if $s$ is in a neighborhood of $t$ and $s \in {\rm int}[(0,T)\minus{\rm int}(N) ]$, then $\d_{xx}\E(s,x(s))>0$ by Lemma \ref{le:dxx>=0}. Moreover, if $s \notin J$, then 
\[x'(s)=-\frac{\d_{xt}\E(s,x(s))}{\d_{xx}\E(s,x(s))}\]
by Lemma \ref{le:notin-N-J-E}. Using Lemma \ref{le:dx*x'<=0}, we conclude that 
\bq \label{eq:dxt*x'>=0}
\d_{xt}\E(s,x(s))\cdot \d_x \E(s,x(s)) \ge 0.
\eq

Let us assume that $\d_x \EE=-1$ (the other case, $\d_x \EE=1$, can be treated by the same way). If $s$ is in a neighborhood of $t$, $s \notin J$ and $s \in {\rm int}[(0,T)\minus {\rm int}(N) ]$, then $\d_x \E(s,x(s))<0$, and hence $\d_{xt}\E(s,x(s))\le 0$ by (\ref{eq:dxt*x'>=0}). Using the continuity of $s\mapsto \d_{xt}\E(s,x(s))$, we have
\[\d_{xt}\E(a_n,x(a_n))\le 0~\text{and}~\d_{xt}\E(b_n,x(b_n))\le 0\]
for $n$ large enough, where $\{a_n\}, \{b_n\}$ are taken as in Step 1.

On the other hand, it was already shown in Step 1 that there exists $t_n\in (a_n,b_n)$ such that $\d_{xt}\E(t_n,x(t_n))=0.$ Therefore, the function $g(s):=\d_{xt}\E(s,x(s))$ has a local maximizer $s_n\in (a_n,b_n)$.  Therefore, for $n$ large enough,
\[\d_{xtt}\E(s_n,x(s_n))= g'(s_n)=0.\]
Since $s_n\to t$, by taking the limit as $n\to \infty$ we obtain $\d_{xtt}\E(t,x(t))= 0$.
\end{proof}

Now we are able to give 
\begin{proof}[Proof of Theorem \ref{thm:differentiable} (ii)] Since $x(\cdot)$ has finite jump points and (H1), (H3) hold true, by dividing $(0,T)$ into the subintervals if necessary, we may further assume that $x(\cdot)$ is continuous on $(0,T)$ and 
\[\{ (t,x)\in (0,T)\times \mathbb{R} \;|\;\d_x \E (t,x)\in \{-1,1\}, {\partial _{xx}}\E(t,x) = {\partial _{xxx}}\E(t,x) = 0\}=\emptyset,\]
\[\{ (t,x)\in (0,T)\times \mathbb{R} \;|\;{\d_x \E (t,x)\in \{-1,1\}, \partial _{xx}}\E(t,x) = {\partial _{xt}}\E(t,x) = 0\}=\emptyset.\]

We denote by $I_1$ the set of isolated points of $\d \N0 $. It remains to consider when $t\notin I_1$. We distinguish the following cases.

{\bf Case 1.} If $t\in {\rm int}(N) $, then $x'(t)=0$. Moreover, if $s$ is in a neighborhood of $t$ then $x'(s)=0$. Therefore, $x'(\cdot)$ is continuous at $t$. 

{\bf Case 2.} If $t\in {\rm int}[(0,T)\minus {\rm int}(N) ]$, then by Lemma \ref{le:dxt=0} we have $t\notin E$. Therefore, by Lemma \ref{le:notin-N-J-E}, 
\[ x'(t)=-\frac{\d_{xt}\E (t, x(t))}{\d_{xx}\E (t,x(t))}.\]
Since the same formula also holds true for any $s$ in a neighborhood of $t$, we have that $x'(\cdot)$ is continuous at $t$. 

{\bf Case 3.} If $t$ is an accumulation point of $\d \N0 $, then $\d_{xt}\EE=0$ by Lemma \ref{le:dN0-dxt=0}. Therefore, $t\notin E$. By Lemma \ref{le:dN0-dxt=0} one has $x'(t)=0$. Next, we shall show that if $t_n\to t$ and $t_n\notin I_1$, then $x'(t_n)\to x'(t)=0$. Indeed, if $t_n\in {\rm int}[(0,T)\minus {\rm int}(N) ]$, then 
\[ x'(t_n)=-\frac{\d_{xt}\E (t_n, x(t_n))}{\d_{xx}\E (t_n,x(t_n))}\to -\frac{\d_{xt}\E (t, x(t))}{\d_{xx}\E (t,x(t))}=0.\]
Otherwise, if $t_n\in {\rm int}(N) $ or $t_n$ is an accumulation point of $\d\N0 $, then we already have $x'(t_n)=0$.

In summary, if $t\in (0,T)\minus I_1$ one has
\[x'(t) = \left\{ \begin{gathered}
   - \frac{{{\partial _{xt}\EE}}}{{{\partial _{xx}\EE}}} ~~{\rm if}~t\in {\rm int}[(0,T)\minus {\rm int}(N) ] , \hfill \\
  0~~~~~~~~~~~~~~~~~~~~{\rm otherwise,}\hfill \\ 
\end{gathered}  \right.\]
and $x'(\cdot)$ is continuous on $(0,T)\minus I_1.$ This completes the proof of Theorem \ref{thm:differentiable} (ii).
\end{proof}

\begin{remark} In general (when the jump set of $x(\cdot)$ and the sets in (H1) and (H3) are finite, instead of empty), the set $I$ in the statement of Theorem \ref{thm:differentiable} (ii) contains the following points: the isolated points of $\d \N0 $ (namely the set $I_1$ in the above proof), the jump points, and the points $t$ such that $\d_x \E (t,x(t))\in \{-1,1\}$, $\partial _{xx}\E(t,x(t))=0$ and either $\partial _{xxx}\E(t,x(t)) = 0$ or $\partial _{xt}\E(t,x(t)) = 0$.
\end{remark}

\subsection{Proof of Theorem \ref{thm:differentiable} (iii)}

To prove Theorem \ref{thm:differentiable} (iii), we need the further preliminary lemmas.

\begin{lemma}\label{le:tn-notin-N0-t-notin-E} Assume that the BV function $x: [0,T] \to \real$ is {\em continuous} and satisfies the strong local stability (\ref{eq:s-LS}) and the energy-dissipation upper bound (\ref{eq:ED-upper}). Assume furthermore that 
\[t\notin {\rm int}(N) ,\d_{xx}\EE=\d_{xt}\EE=0~~{and}~\d_{xxx}\E(t,x(t)) \ne 0.\]
Then the limits 
\[\mathop {\lim }\limits_{s \notin {\rm int}(N)  ,s \downarrow t} \frac{{x(s) - x(t)}}{{s - t}}~~ ~~and~~ \mathop {\lim }\limits_{s \notin {\rm int}(N) ,s \uparrow t} \frac{{x(s) - x(t)}}{{s - t}}~,  \]
exist and they are two solutions to the equation (w.r.t. $X$)
\bq \label{eq:tam-thuc}
\d_{xtt}\EE + 2\d_{xxt}\EE \cdot X + \d_{xxx}\EE \cdot X^2 =0.
\eq

Moreover, if there is a sequence $t_n\to t$ such that 
\[t_n\ne t,t_n\notin {\rm int}(N)~~{and}~\d_{xx}\E(t_n,x(t_n))=0\]
for all $n\ge 1$, then the equation (\ref{eq:tam-thuc}) has a unique solution $X=-\d_{xxt}\EE/\d_{xxx}\EE$. 

Here recall that $N:=\{t\in (0,T) \;|\; x'(t)=0\}$.
\end{lemma}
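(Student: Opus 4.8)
The plan is to reduce the statement to a second-order Taylor analysis of $s\mapsto \partial_x\E(s,x(s))$ near $t$, restricted to $s\notin{\rm int}(N)$. First I would record two local facts. Since $x(\cdot)$ is continuous (so its jump set is empty) and $t\notin{\rm int}(N)$, Lemma~\ref{le:-1<=dx<=1-1-dim} gives $\partial_x\E(s,x(s))\in\{-1,1\}$ for every $s\notin{\rm int}(N)$; as $s\mapsto\partial_x\E(s,x(s))$ is continuous and cannot jump between $-1$ and $1$, we get $\partial_x\E(s,x(s))=\partial_x\EE$ for all such $s$ close to $t$. Similarly Lemma~\ref{le:dxx>=0} gives $\partial_{xx}\E(s,x(s))\ge 0$ for these $s$. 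Writing $r_s:=(x(s)-x(t))/(s-t)$ and using the hypotheses $\partial_{xt}\EE=\partial_{xx}\EE=0$, the second-order expansion of $0=\partial_x\E(s,x(s))-\partial_x\EE$, divided by $(s-t)^2$, yields
\[ Q(r_s)=-o(1)\,(1+r_s^2),\qquad Q(X):=\partial_{xxx}\EE\,X^2+2\partial_{xxt}\EE\,X+\partial_{xtt}\EE, \]
where $o(1)\to 0$ as $s\to t$; here continuity of $x(\cdot)$ ensures $x(s)-x(t)\to 0$, so the remainder is genuinely $o((s-t)^2+(x(s)-x(t))^2)$.

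Next I would show that $r_s$ stays bounded as $s\to t$, $s\notin{\rm int}(N)$: if $|r_s|\to\infty$ along a subsequence, dividing the identity above by $r_s^2$ and letting $s\to t$ forces $\partial_{xxx}\EE=0$, contradicting the hypothesis. With $r_s$ bounded the remainder tends to $0$, so $Q(r_s)\to 0$ and every limit point of $r_s$ is a root of $Q$; in particular $Q$ has real roots. This already gives that the one-sided limits, once known to exist, are solutions of the quadratic equation.

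The main obstacle is existence of the one-sided limits, i.e. ruling out oscillation between the two roots of $Q$ when they are distinct. Here I would use the one-sided information $\partial_{xx}\E(s,x(s))\ge 0$. Expanding this inequality to first order and dividing by $s-t$ (using $\partial_{xx}\EE=0$), then passing to the limit with $r_s$ bounded, every right-hand limit point $\rho$ satisfies $\partial_{xxx}\EE\,\rho+\partial_{xxt}\EE\ge 0$, i.e. $Q'(\rho)\ge 0$, while every left-hand limit point satisfies $Q'(\rho)\le 0$. Since $\partial_{xxx}\EE\ne 0$, the system $\{Q(\rho)=0,\ Q'(\rho)\ge 0\}$ selects exactly one root of $Q$ and $\{Q(\rho)=0,\ Q'(\rho)\le 0\}$ selects exactly one (they coincide only in the double-root case). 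Hence all right-hand limit points agree and all left-hand limit points agree, so both one-sided limits exist and solve (\ref{eq:tam-thuc}).

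Finally, for the ``moreover'' part, suppose $t_n\to t$ with $t_n\ne t$, $t_n\notin{\rm int}(N)$ and $\partial_{xx}\E(t_n,x(t_n))=0$. Expanding this equality to first order and dividing by $t_n-t$, the boundedness of $r_{t_n}$ lets me pass to a subsequence with $r_{t_n}\to\rho_0$ and $\partial_{xxx}\EE\,\rho_0+\partial_{xxt}\EE=0$, i.e. $Q'(\rho_0)=0$; but $\rho_0$ is also a root of $Q$ by the bounded-limit-point argument, so the vertex of $Q$ is a root and $Q$ has the double root $\rho_0=-\partial_{xxt}\EE/\partial_{xxx}\EE$. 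Every limit point of $r_s$ then equals this value, so both one-sided limits coincide with $-\partial_{xxt}\EE/\partial_{xxx}\EE$, as claimed.
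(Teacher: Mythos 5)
Your proof is correct and follows essentially the same route as the paper's: the second-order Taylor expansion of $\d_x\E(s,x(s))-\d_x\EE=0$ to show the difference quotients accumulate on the roots of (\ref{eq:tam-thuc}), the sign information $\d_{xx}\E(s,x(s))\ge 0$ from Lemma \ref{le:dxx>=0} to select a single root on each side, and the first-order expansion of $\d_{xx}\E(t_n,x(t_n))=0$ to force a double root in the ``moreover'' part. The only (harmless) differences are that you phrase the root selection via the sign of $Q'$ at the limit point rather than the paper's case split on $\d_x\EE=\pm1$ and the position of the vertex, and you are slightly more explicit about the boundedness of the difference quotients before dividing by $(s-t)^2$.
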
 

\begin{proof} {\bf Step 1.} Let $t_n\to t$ and $t_n\notin {\rm int}(N) $. We have $\d_{x}\E(t_n,x(t_n))=\d_{x}\E(t,x(t))$ by Lemma \ref{le:-1<=dx<=1-1-dim} and the continuity of $s\mapsto \d_x \E(s,x(s))$ at $s=t$. Using Taylor's expansion we obtain
\bqq
0&=&\d_{x}\E(t_n,x(t_n))-\d_{x}\E(t,x(t))\hfill\\
&=& \d_{xtt}\EE\cdot (t_n-t)^2+ 2\d_{xxt}\EE \cdot (x(t_n)-x(t))\cdot(t_n-t) \hfill\\
&~&+ \d_{xxx}\EE \cdot (x(t_n)-x(t))^2 + o(|x(t_n)-x(t)|^2)+o(|t_n-t|^2).
\eqq
Dividing this equality by $(t_n-t)^2$ and taking the limit as $n\to \infty$ we get
\bq \label{eq:convergence-2-roots}
\d_{xtt}\EE + 2\d_{xxt}\EE \cdot \frac{x(t_n)-x(t)}{t_n-t} + \d_{xxx}\EE \cdot \left( {\frac{x(t_n)-x(t)}{t_n-t}} \right)^2 \to 0.
\eq
Notice that, (\ref{eq:convergence-2-roots}) also shows that the solutions of (\ref{eq:tam-thuc}) are real. Moreover, if we denote by  $X_1$ and $X_2$ the two solutions of the equation (\ref{eq:tam-thuc}) , then \[\min \left\{ {\left| {\frac{{x({t_n}) - x(t)}}{{{t_n} - t}} - {X_1}} \right|,\left| {\frac{{x({t_n}) - x(t)}}{{{t_n} - t}} - {X_2}} \right|} \right\} \to 0\]
as $n\to \infty$.

\text{}\\
{\bf Step 2.} Using Lemma \ref{le:dxx>=0} and  Taylor's expansion one has
\bq \label{eq:123-xyz}
0&\le & \d_{xx}\E (t_n,x(t_n))-\d_{xx}\EE \nn\hfill\\
&=& \d_{xxt}\EE \cdot (t_n-t) + \d_{xxx} \EE \cdot (x(t_n)-x(t))\nn\hfill\\
&~&+ o(t_n-t)+o(x(t_n)-x(t)).
\eq

By Lemma \ref{le:-1<=dx<=1-1-dim}, $\d_x \EE\in \{-1,1\}$. We distinguish two cases.

{\bf Case 1.} $\d_x \EE=-1$. In this case, by Lemma \ref{le:dxx>=0} we have $\d_{xxx}\EE >0$. Therefore, from the inequality (\ref{eq:123-xyz}), if $t_n \downarrow t$, then 
\[ \liminf_{n\to \infty}\frac{x(t_n)-x(t)}{t_n-t}\ge -\frac{\d_{xxt}\EE}{\d_{xxx}\EE};\]
while if $t_n \uparrow t$, then 
\[\limsup_{n\to \infty}\frac{x(t_n)-x(t)}{t_n-t}\le -\frac{\d_{xxt}\EE}{\d_{xxx}\EE}.\]

Since 
\[ \max\{X_1,X_2\} \ge -\frac{\d_{xxt}\EE}{\d_{xxx}\EE} \ge \min\{X_1,X_2\},\]
the convergence in (\ref{eq:convergence-2-roots}) reduces to 
\[\lim_{n\to \infty}\frac{x(t_n)-x(t)}{t_n-t}=  \max\{X_1,X_2\}~~{\rm if}~t_n\downarrow t,\]
and
\[\lim_{n\to \infty}\frac{x(t_n)-x(t)}{t_n-t}=  \min \{X_1,X_2\}~~{\rm if}~t_n\uparrow t.\]

{\bf Case 2.} If $\d_x \EE=1$, then similarly,
\[\lim_{n\to \infty}\frac{x(t_n)-x(t)}{t_n-t}=  \min\{X_1,X_2\}~~{\rm if}~t_n\downarrow t,\]
and
\[\lim_{n\to \infty}\frac{x(t_n)-x(t)}{t_n-t}=  \max \{X_1,X_2\}~~{\rm if}~t_n\uparrow t.\]

In both cases, the first conclusion of Lemma \ref{le:tn-notin-N0-t-notin-E} follows.

\text{}\\
{\bf Step 3.} Now assume that there is a sequence $t_n\to t$ such that $t_n\ne t$, $t_n\notin {\rm int}(N) $ and  $\d_{xx}\E(t_n,x(t_n))=0$ for all $n\ge 1$. Using Taylor's expansion,
\bqq 
0&= & \d_{xx}\E (t_n,x(t_n))-\d_{xx}\EE \nn\hfill\\
&=& \d_{xxt}\EE \cdot (t_n-t) + \d_{xxx} \EE \cdot (x(t_n)-x(t))+ o(t_n-t)+o(x(t_n)-x(t)),\nn
\eqq
we find that
\[ \lim_{n\to \infty}\frac{x(t_n)-x(t)}{t_n-t}=-\frac{\d_{xxt}\EE}{\d_{xxx}\EE}.\]
Thus $-{\d_{xxt}\EE}/{\d_{xxx}\EE}$ is a solution to (\ref{eq:tam-thuc}). Substituting this solution into (\ref{eq:tam-thuc}) we find that
\[ [\d_{xxt}\EE]^2 = \d_{xtt}\EE \cdot \d_{xxx}\EE ,\]
which implies that (\ref{eq:tam-thuc}) has a unique solution.  
\end{proof}

\begin{lemma}\label{le:accum-N0-dxtt=0} Assume that the BV function $x: [0,T] \to \real$ is {\em continuous} and satisfies the strong local stability (\ref{eq:s-LS}) and the energy-dissipation upper bound (\ref{eq:ED-upper}). Let $t$ be an accumulation point of $\d \N0 $, and assume either $t \notin E$, or $t \in E$ and $\d_{xxx}\E(t,x(t)) \ne 0$. Then $x'(t)=0$, and $\d_{xtt}\EE=0.$

Here recall that $N:=\{t\in (0,T) \;|\; x'(t)=0\}$ and $E:=\{t\in (0,T)\;|\; \partial_{xx}\E(t,x(t))=0\}.$
\end{lemma}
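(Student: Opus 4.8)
The plan is to treat the two alternatives of the hypothesis separately and, in the harder one, to read off all the information from the quadratic equation (\ref{eq:tam-thuc}) of Lemma \ref{le:tn-notin-N0-t-notin-E}. Since $t$ is an accumulation point of $\d\N0$, Lemma \ref{le:dN0-dxt=0} already gives $\d_{xt}\EE=0$; and if $t\notin E$, i.e. $\d_{xx}\EE\neq0$, then the same lemma directly yields $x'(t)=0$ and $\d_{xtt}\EE=0$, so nothing remains. I would therefore focus on the case $t\in E$, where $\d_{xx}\EE=\d_{xt}\EE=0$ and $\d_{xxx}\EE\neq0$; as $\d\N0$ is closed we also have $t\notin{\rm int}(N)$, so the hypotheses of Lemma \ref{le:tn-notin-N0-t-notin-E} are met. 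Consequently the one-sided limits of the difference quotient along $(0,T)\minus{\rm int}(N)$ exist and are the two real roots of (\ref{eq:tam-thuc}); squeezing through each constancy interval of ${\rm int}(N)$, exactly as in Step~2 of Lemma \ref{le:dN0-dxt=0}, promotes them to the genuine one-sided derivatives $x'_\pm(t)=L_\pm$. Normalising by Lemma \ref{le:-1<=dx<=1-1-dim} to $\d_x\EE=-1$, Lemma \ref{le:dxx>=0} gives $\d_{xxx}\EE>0$ and Lemma \ref{le:dx*x'<=0} gives $L_\pm\ge0$.

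Next I would show $\d_{xtt}\EE=0$ by repeating the Rolle argument of Step~3 of Lemma \ref{le:dN0-dxt=0}. Choose constancy intervals $(a_k,b_k)\subset{\rm int}(N)$ with $a_k,b_k\in\d\N0$, $a_k,b_k\to t$, on which $x\equiv c_k$. Since $|\d_x\E(\cdot,x(\cdot))|\le1$ while $\d_x\E(a_k,c_k)=\d_x\E(b_k,c_k)=-1$, the map $s\mapsto\d_x\E(s,c_k)$ is minimised at $a_k$ and $b_k$, so Rolle produces an interior zero of $\d_{xt}\E(\cdot,c_k)$; on the other hand $\d_{xt}\E(s,x(s))\le0$ for $s\notin{\rm int}(N)$ near $t$ (from $x'=-\d_{xt}\E/\d_{xx}\E\ge0$, combining Lemma \ref{le:notin-N-J-E} with Lemma \ref{le:dx*x'<=0}), so $\d_{xt}\E(a_k,c_k),\d_{xt}\E(b_k,c_k)\le0$ by continuity. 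Hence $s\mapsto\d_{xt}\E(s,c_k)$ attains an interior maximum in $(a_k,b_k)$, at which $\d_{xtt}\E(\cdot,c_k)$ vanishes; letting $k\to\infty$ gives $\d_{xtt}\EE=0$. Then (\ref{eq:tam-thuc}) factors as $\d_{xxx}\EE\,X(X-r)=0$ with $r:=-2\d_{xxt}\EE/\d_{xxx}\EE$, and $L_\pm\ge0$ forces $L_-=0$, $L_+=r\ge0$.

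It remains to prove $r=0$, equivalently $\d_{xxt}\EE=0$, and this is the step I expect to be the main obstacle. The difficulty is intrinsic: Lemma \ref{le:dx*x'<=0} controls the difference quotients from one side only, so it cannot by itself eliminate a positive root, and a-priori very short plateaus are compatible with $x'_+(t)=r>0$. The missing ingredient is the accumulation of $\d\N0$. At the left endpoints one first checks, as above, that $\d_{xt}\E(a_k,c_k)=0$; differentiating the relation $\d_x\E(s,x(s))=-1$ along the moving arc adjacent to $a_k$ then gives $\d_{xx}\E(a_k,c_k)\,x'(a_k^-)=0$, so for each $k$ either $\d_{xx}\E(a_k,x(a_k))=0$ or $x'(a_k^-)=0$. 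In the first case the sequence $\{a_k\}$ verifies the hypothesis of the ``moreover'' part of Lemma \ref{le:tn-notin-N0-t-notin-E}, which returns $[\d_{xxt}\EE]^2=\d_{xtt}\EE\cdot\d_{xxx}\EE=0$, hence $\d_{xxt}\EE=0$; in the second case the vanishing slopes $x'(a_k^-)$ are moving-arc slopes tending to $L_+=r$, so again $r=0$. The point that must be handled with care, and that I regard as the heart of the matter, is to make this argument uniform and independent of the side from which $\d\N0$ accumulates. The cleanest route I see is a branch-stability dichotomy: to second order the level set $\{\d_x\E=-1\}$ at $(t,x(t))$ splits into the two arcs of slopes $0$ and $r$, along which $\d_{xx}\E\approx\pm\,\d_{xxt}\EE\cdot(s-t)$; if $r\neq0$ then on each side of $t$ one of these arcs violates $\d_{xx}\E\ge0$ (Lemma \ref{le:dxx>=0}), and a direct expansion shows that a plateau straddling the forbidden arc would force $\d_x\E<-1$, contradicting weak local stability. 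Hence the solution cannot alternate between plateaus and moving arcs near $t$ and $\d\N0$ cannot accumulate there, a contradiction which yields $r=0$. Then $L_+=L_-=0$, so $x'(t)=0$, and together with $\d_{xtt}\EE=0$ the proof is complete.
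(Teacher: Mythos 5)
Your overall strategy coincides with the paper's for the first half: reduce to the case $t\in E$ via Lemma \ref{le:dN0-dxt=0}, invoke Lemma \ref{le:tn-notin-N0-t-notin-E} to identify the one-sided limits with roots of (\ref{eq:tam-thuc}), and apply Rolle's theorem on the plateaus $[a_k,b_k]$. But both of your decisive steps have gaps. First, your route to $\d_{xtt}\EE=0$ needs $\d_{xt}\E(a_k,c_k)\le 0$ and $\d_{xt}\E(b_k,c_k)\le 0$, which you justify via $x'=-\d_{xt}\E/\d_{xx}\E$ from Lemma \ref{le:notin-N-J-E}; that lemma requires $\d_{xx}\E(s,x(s))\ne 0$, which is precisely what you cannot assume at points near $t\in E$ (and, separately, not every point of $(0,T)\minus {\rm int}(N)$ near $t$ need be a limit of points of ${\rm int}[(0,T)\minus{\rm int}(N)]$). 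Note also that the plateau-side minimality of $s\mapsto \d_x\E(s,c_k)$ that you invoke actually gives the \emph{opposite} inequality $\d_{xt}\E(a_k,c_k)\ge 0$ at the left endpoint. Second, and more seriously, you reduce $x'(t)=0$ to proving $\d_{xxt}\EE=0$ and then offer only a sketched ``branch-stability dichotomy'' which you yourself flag as the unresolved heart of the matter; as written this is not a proof.

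The missing idea is to Taylor-expand $\d_{xt}\E$ at the Rolle points. If $t_k\in(a_k,b_k)$ are the interior zeros of $\d_{xt}\E(\cdot,c_k)$, then, using $\d_{xt}\EE=0$, the expansion of $0=\d_{xt}\E(t_k,x(t_k))-\d_{xt}\EE$ yields the linear relation $\d_{xtt}\EE+\d_{xxt}\EE\cdot X_1=0$ (equation (\ref{eq:dxtt+dxxt.x'=0}) in the paper), where $X_1$ is the one-sided limit in the direction from which the plateaus accumulate. This single identity does both jobs. If $\d_{xtt}\EE\ne 0$, it forces $X_1=-\d_{xtt}\EE/\d_{xxt}\EE$ to be the double root of (\ref{eq:tam-thuc}), whence $[\d_{xxt}\EE]^2=\d_{xtt}\EE\cdot\d_{xxx}\EE$; normalizing $\d_x\EE=-1$ so that $\d_{xxx}\EE>0$ by Lemma \ref{le:dxx>=0}, this gives $\d_{xtt}\EE>0$, hence $s\mapsto\d_x\E(s,c_k)$ is strictly convex on $[a_k,b_k]$ with equal boundary values $-1$ and dips below $-1$ at the midpoint, violating $|\d_x\E|\le 1$. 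Thus $\d_{xtt}\EE=0$ with no endpoint sign information needed. The same relation then gives $\d_{xxt}\EE\cdot X_1=0$, and in either case ($\d_{xxt}\EE\ne 0$, or $\d_{xxt}\EE=0$ which makes $0$ the unique root of (\ref{eq:tam-thuc})) one concludes $X_1=0$; there is no need to establish $\d_{xxt}\EE=0$ a priori. The remaining case analysis on whether a plateau is adjacent to $t$ on one side then upgrades $X_1=0$ to $x'(t)=0$. I recommend you rebuild the second half of your argument around (\ref{eq:dxtt+dxxt.x'=0}).
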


\begin{proof} Since $t$ is an accumulation point of $\d \N0 $, Lemma \ref{le:dN0-dxt=0} ensures that $\d_{xt}\EE=0$. If $t\notin E$, then Lemma \ref{le:dN0-dxt=0} also implies that $x'(t)=0$  and $\d_{xtt}\EE=0$. Therefore, it remains to consider the case $t\in E$. 
\\\text{}\\
{\bf Step 1}. Since $t$ is an accumulation point of $\d \N0 $, there exists a sequence $\{(a_n,b_n)\}$ such that $(a_n,b_n)\subset {\rm int}(N) $, $a_n,b_n\in \d \N0 $ for all $n\ge 1$, and $a_n,b_n \downarrow t$ (or $a_n,b_n \uparrow t$). By Lemma \ref{le:tn-notin-N0-t-notin-E}, we have
\[\lim_{n\to \infty}\frac{{x({a_n}) - x(t)}}{{{a_n} - t}}= \lim_{n\to \infty} \frac{{x({b_n}) - x(t)}}{{{b_n} - t}} =X_1,\]
where $X_1$ is a solution to (\ref{eq:tam-thuc}). Note that $x(s)=c_n$, a constant, when $s\in [a_n,b_n]$. Therefore, if $t_n\in [a_n,b_n]$ for all $n\ge 1$, then using the fact that $x(\cdot)$ is a constant in $[a_n,b_n]$, one has
\[\left| {\frac{{x({t_n}) - x(t)}}{{{t_n} - t}}-X_1} \right| \leqslant \max \left\{ {\left| {\frac{{x({b_n}) - x(t)}}{{{b_n} - t}}-X_1} \right|,\left| {\frac{{x({a_n}) - x(t)}}{{{a_n} - t}}-X_1} \right|} \right\} \to 0.\]
Thus if $t_n\in [a_n,b_n]$, then
\[\lim_{n\to \infty}\frac{{x({t_n}) - x(t)}}{{{t_n} - t}}=X_1.\]

\text{}\\
{\bf Step 2.} On the other hand, by Lemma \ref{le:-1<=dx<=1-1-dim} and the continuity of $s\mapsto \d_x \E (s,x(s))$ at $s = t$, we have 
\bq \label{eq:dxan=dxbn=dxt=1-1}
\d_x \E (a_n,x(a_n))=\d_x \E (t,x(t))= \d_x \E (b_n,x(b_n))\in \{-1,1\}
\eq
for $n$ large enough. Consider the one-variable function
\bq \label{eq:def-f=dx}
f_n(s):= \d_x \E(s,c_n)~~{\rm on}~s\in [a_n,b_n],
\eq
where recall that $x(s)=c_n$ for all $s \in [a_n,b_n]$. Since $f_n(a_n)=f_n(b_n)$, by applying Rolle's Theorem, we can find $t_n\in (a_n,b_n)$ such that 
\[\d_{xt}\E(t_n,x(t_n))=f_n'(t_n)=0.\]

Using Taylor's expansion we have
\bqq
 0&=&\d_{xt} \E (t_n,x(t_n))-\d_{xt}\EE \hfill\\
 &=& \d_{xtt}\EE\cdot (t_n-t) + \d_{xxt}\EE \cdot (x(t_n)-x(t))+o(t_n-t)+o(x(t_n)-x(t)).
 \eqq
Dividing this equality by $t_n-t$ and taking the limit as $n\to \infty$ we obtain
\bq \label{eq:dxtt+dxxt.x'=0}
  \d_{xtt}\EE + \d_{xxt}\EE \cdot X_1=0.
  \eq

\text{}\\
{\bf Step 3.} We show that $\d_{xtt}\EE=0$. Assume by contradiction that $\d_{xtt}\EE\ne 0$. Then from (\ref{eq:dxtt+dxxt.x'=0}), we must have $ \d_{xxt}\EE\ne 0$ and 
\[ X_1=-\frac{\d_{xtt}\EE}{\d_{xxt}\EE}\ne 0.\]
Since $X_1$ is a solution to (\ref{eq:tam-thuc}), we obtain
\bq \label{eq:delta-tam-thuc=0}
 [\d_{xxt}\EE]^2 = \d_{xtt}\EE\cdot \d_{xxx}\EE,
 \eq
which in particular implies that $X_1$ is the unique solution to (\ref{eq:tam-thuc}).

From (\ref{eq:dxan=dxbn=dxt=1-1}), let us assume that 
\bqq 
\d_x \E (a_n,x(a_n))=\d_x \E (t,x(t))= \d_x \E (b_n,x(b_n))=-1
\eqq
for $n$ large enough (the other case can be treated by the same way).

By Lemma \ref{le:dxx>=0} one has $\d_{xxx}\EE>0$. From (\ref{eq:delta-tam-thuc=0}) one has $\d_{xtt}\EE >0$. By the continuity of $s\mapsto \d_{xtt}\E (s,x(s))$ at $s=t$, we have $\d_{xtt}\E (s,x(s))>0$ when $s$ is in a neighborhood of $t$. In particular, the function $f_n(s)$ defined by (\ref{eq:def-f=dx}) satisfies 
\[f_n''(s)=\d_{xtt}\E (s,x(s))>0~~\fall s\in (a_n,b_n)\]
for $n$ large enough. 

Thus $f_n$ is strictly convex on $[a_n,b_n]$. Consequently, if we choose $s:=(a_n+b_n)/2$, then 
\[
\d_x \E(s,x(s))= {f_n}(s)< \frac{f(a_n)+f(b_n)}{2}=-1.
\]
However, this contradicts to the fact that $\d_x\E(s,x(s))\ge -1$ for all $s\notin {\rm int}(N)$ by Lemma \ref{le:-1<=dx<=1-1-dim}. Thus we must have $\d_{xtt}\EE=0$. 

\text{}\\
{\bf Step 4.} Now we show that $X_1=0$. In fact, if $\d_{xxt}\EE\ne 0$, then from $\d_{xtt}\EE=0$ and (\ref{eq:dxtt+dxxt.x'=0}) we must have $X_1=0$. Otherwise, if $\d_{xxt}\EE=0$, then $0$ is the unique solution to the equation (\ref{eq:tam-thuc}), and hence we also have $X_1=0$.

\text{}\\
{\bf Step 5.} Now we show that $x'(t)=0$. We distinguish three cases.

{\bf Case 1.} Assume that there exists $a<t$ such that $(a,t)\subset {\rm int}(N) $. It is obvious that $x'_-(t)=0=\lim\limits_{s\uparrow t} x'(s).$ It remains to show that $x'_+(t)=0$, namely to show that 
\[\lim_{n\to\infty}\frac{x(t_n)-x(t)}{t_n-t}=0\]
provided that $t_n\downarrow t$.

First, we assume that $t_n\in {\rm int}(N) $ and $t_n\downarrow t$. Note that $(t,b)\not\subset {\rm int}(N) $ for all $b>t$ (otherwise, by the continuity we have $x(a)=x(t)=x(b)$ and $t\in (a,b)\subset {\rm int}(N) $, which is a contradiction). Therefore, as in Step 1, we can choose the sequence $\{(a_n,b_n)\}$ such that $(a_n,b_n)\subset {\rm int}(N) $, $a_n,b_n\in \d \N0 $ for all $n\ge 1$, and $a_n,b_n \downarrow t$. Therefore, it follows from Step 1 and the fact that $X_1=0$
\[\lim_{n\to\infty}\frac{x(t_n)-x(t)}{t_n-t}=\lim_{n\to\infty}\frac{x(a_n)-x(t)}{a_n-t}=0.\]

Next, assume that $t_n\notin {\rm int}(N) $ and $t_n\downarrow t$. Then by Lemma \ref{le:tn-notin-N0-t-notin-E} we have
\[\lim_{n\to\infty}\frac{x(t_n)-x(t)}{t_n-t}=\lim_{n\to\infty}\frac{x(a_n)-x(t)}{a_n-t}=0.\]

Thus for any sequence $t_n\downarrow t$ we obtain
\[\lim_{n\to\infty}\frac{x(t_n)-x(t)}{t_n-t}=0.\]
Therefore, $x'_+(t)=0$. Thus $x'(t)=0$.

{\bf Case 2.} If there exists $b>t$ such that $(t,b)\subset {\rm int}(N) $, then similarly to Case 1 we have $x'(t)=0$.

{\bf Case 3.} Finally, assume that $(a,t)\not\subset {\rm int}(N) $ for all $a<t$, and $(t,b)\not\subset {\rm int}(N) $ for all $b>t$. Then by the same proof in Case 1, using the fact that $(t,b)\not\subset {\rm int}(N) $ for all $b>t$, we have $x'_+(t)=0$. Similarly, using the fact that $(a,t)\not\subset {\rm int}(N) $ for all $a<t$, we obtain $x'_-(t)=0$. Thus $x'(t)=0$. This completes our proof.
\end{proof}

Now we are able to give

\begin{proof}[Proof of Theorem \ref{thm:differentiable} (iii)] {\bf Step 1.} 
Since $x(\cdot)$ has only finite jumps and (H1), (H3), (H4) hold true, by dividing $(0,T)$ into subintervals if necessary, we may assume that $x(\cdot)$ has no jump and 
\bqq
&& \{ (t,x)\in (0,T)\times \mathbb{R} \;|\;\d_x \E (t,x)\in \{-1,1\}, {\partial _{xx}}\E(t,x) = {\partial _{xxx}}\E(t,x) = 0\}=\emptyset, \hfill\\
&& \{ (t,x)\in (0,T)\times \mathbb{R} \;|\;{\d_x \E (t,x)\in \{-1,1\}, \partial _{xx}}\E(t,x) = {\partial _{xt}}\E(t,x) = 0\}=\emptyset, \hfill\\
&&\{ (t,x)\in [0,T]\times \mathbb{R}\; |\;{\d_x \E (t,x)\in \{-1,1\}, \partial _{xt}}\E(t,x) = {\partial _{xtt}}\E(t,x) = 0\} =\emptyset.
\eqq
\text{}\\
{\bf Step 2.} Assume that $\d \N0 $ has an accumulation point $t$. Then we have $\d_{x}\EE\in \{-1,1\}$ by Lemma \ref{le:-1<=dx<=1-1-dim}. Note that if $t=0$ or $t=T$, then Lemma \ref{le:-1<=dx<=1-1-dim} is not applicable directly to $t$, but because $t$ is an accumulation point of $\d \N0 $, we can apply Lemma \ref{le:-1<=dx<=1-1-dim} to the points in $\d \N0 \cap (0,T)$ first, and then take the limit to get the conclusion at $t$.

Next, we have $\d_{xt}\EE =0$ by Lemma \ref{le:dN0-dxt=0}, and $\d_{xtt}\EE =0$ by Lemma \ref{le:dN0-dxt=0} (when $t\notin E$) and Lemma \ref{le:accum-N0-dxtt=0} (when $t\in E$). Note that these lemmas apply even if $t=0$ or $t=T$. 

Thus 
\[\d_{x}\EE\in \{-1,1\}, \d_{xt}\EE =0, \d_{xtt}\EE =0.\]
By condition (H3), this case cannot happen. Therefore, $\d \N0 $ has no accumulation point. Thus $\d \N0 $ is finite, and hence ${\rm int}(N) \cup~ {\rm int}[(0,T)\minus {\rm int}(N) ]$ is the union of finitely many open intervals. 

\text{}\\
{\bf Step 3.} Finally, if $t\in {\rm int}(N) $, then $x'(t)=0$. On the other hand, if $t\in {\rm int}[(0,T)\minus {\rm int}(N) ]$, then by Lemma \ref{le:dxt=0} we have $t\notin E$, and hence  
\[ x'(t)=-\frac{\d_{xt}\E (t, x(t))}{\d_{xx}\E (t,x(t))}\]
by Lemma \ref{le:notin-N-J-E}. Thus we can conclude that $x(\cdot)$ is of class $C^1$ in ${\rm int}(N) \cup  ~{\rm int}[(0,T)\minus {\rm int}(N) ]$. The proof is completed. 
\end{proof}

\subsection{Proof of Theorem \ref{thm:differentiable} (i)}

Finally, to obtain Theorem \ref{thm:differentiable} (i), we need the following lemma.

\begin{lemma}\label{le:t-E} Assume that the BV function $x: [0,T] \to \real$ is {\em continuous} and satisfies the strong local stability (\ref{eq:s-LS}) and the energy-dissipation upper bound (\ref{eq:ED-upper}). If $t\in {\rm int}[(0,T)\minus {\rm int}(N) ]$, $t\in E$ and $\d_{xxx}\E(t,x(t)) \ne 0$, then the right and left derivatives
\[ x'_+(t):=\lim_{s\downarrow t} \frac{x(s)-x(t)}{s-t},~~x'_-(t):=\lim_{s\uparrow t} \frac{x(s)-x(t)}{s-t},
\]
exist and they are two solutions of the equation (\ref{eq:tam-thuc}). 

Moreover, if $t$ is an accumulation point of $E$, then 
\[x'(t)=-\frac{\d_{xxt}\EE}{\d_{xxx} \EE}\]
and it is the unique solution to the equation (\ref{eq:tam-thuc}).

On the other hand, if $t$ is an isolated point of $E$, then either 
\[x'(t)=-\frac{\d_{xxt}\EE}{\d_{xxx} \EE},\]
or 
\[x'_+(t)=\lim_{s\downarrow t} x'(s),~~x'_-(t)=\lim_{s\uparrow t} x'(s).\]
Here recall that $N:=\{t\in (0,T) \;|\; x'(t)=0\}$ and $E:=\{t\in (0,T)\;|\; \partial_{xx}\E(t,x(t))=0\}.$
\end{lemma}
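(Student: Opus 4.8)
The plan is to combine the one-sided derivative analysis of Lemma \ref{le:tn-notin-N0-t-notin-E} with the pointwise formula for $x'(s)$ at regular points from Lemma \ref{le:notin-N-J-E}, distinguishing the local geometry of $E$ near $t$. First I would record the preliminary facts. Since $t\in{\rm int}[(0,T)\minus{\rm int}(N)]$, $t\in E$ and $\d_{xxx}\EE\ne 0$, Lemma \ref{le:dxt=0} gives $\d_{xt}\EE=0$; together with $\d_{xx}\EE=0$ this shows that the hypotheses of Lemma \ref{le:tn-notin-N0-t-notin-E} are satisfied at $t$. Moreover, $t\in{\rm int}[(0,T)\minus{\rm int}(N)]$ provides a neighborhood $U$ of $t$ with $U\cap{\rm int}(N)=\emptyset$, so the constraint ``$s\notin{\rm int}(N)$'' is automatic for $s\in U$. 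Consequently the restricted one-sided limits in Lemma \ref{le:tn-notin-N0-t-notin-E} coincide with the genuine one-sided derivatives, so $x'_+(t)$ and $x'_-(t)$ exist and are exactly the two (real) roots $X_1,X_2$ of the quadratic (\ref{eq:tam-thuc}). This already yields the first assertion.

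Next I would treat the accumulation-point case. If $t$ is an accumulation point of $E$, I pick $s_n\in E$, $s_n\to t$, $s_n\ne t$; for large $n$ these satisfy $s_n\notin{\rm int}(N)$ and $\d_{xx}\E(s_n,x(s_n))=0$, which is precisely the extra hypothesis in the ``moreover'' part of Lemma \ref{le:tn-notin-N0-t-notin-E}. Hence (\ref{eq:tam-thuc}) has the unique root $-\d_{xxt}\EE/\d_{xxx}\EE$, forcing $x'_+(t)=x'_-(t)$; thus $x'(t)$ exists and equals $-\d_{xxt}\EE/\d_{xxx}\EE$.

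The substantial case is when $t$ is isolated in $E$. Then on a punctured neighborhood of $t$ every $s\ne t$ satisfies $s\notin E$ and $s\notin{\rm int}(N)$, so (using $J=\emptyset$ by continuity) Lemma \ref{le:notin-N-J-E} gives that $x'(s)$ exists with
\[x'(s)=-\frac{\d_{xt}\E(s,x(s))}{\d_{xx}\E(s,x(s))}.\]
To pass to the limit $s\to t$ I would Taylor-expand numerator and denominator to first order in $h=s-t$, using $\d_{xt}\EE=\d_{xx}\EE=0$ and the already-established expansion $x(s)-x(t)=x'_\pm(t)\,h+o(h)$. This produces
\[\lim_{s\downarrow t}x'(s)=-\frac{\d_{xtt}\EE+\d_{xxt}\EE\,x'_+(t)}{\d_{xxt}\EE+\d_{xxx}\EE\,x'_+(t)},\]
together with the analogous expression from the left involving $x'_-(t)$, whenever the denominators do not vanish.

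The main obstacle, and the heart of this case, is to convert these quotients into the stated dichotomy. I would exploit that $x'_\pm(t)$ are roots of (\ref{eq:tam-thuc}): substituting $\d_{xtt}\EE=-2\d_{xxt}\EE\,x'_+(t)-\d_{xxx}\EE\,(x'_+(t))^2$ collapses the numerator to $-x'_+(t)\,[\d_{xxt}\EE+\d_{xxx}\EE\,x'_+(t)]$, so the quotient equals $x'_+(t)$ provided the denominator $\d_{xxt}\EE+\d_{xxx}\EE\,x'_+(t)$, which is half the derivative of the quadratic at the root, is nonzero, i.e. provided the root is simple. Thus the two sub-cases are governed by the discriminant of (\ref{eq:tam-thuc}). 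If $[\d_{xxt}\EE]^2=\d_{xtt}\EE\cdot\d_{xxx}\EE$ the quadratic has a double root and $x'_+(t)=x'_-(t)=-\d_{xxt}\EE/\d_{xxx}\EE$, giving the first alternative that $x'(t)$ exists; otherwise the two roots are distinct and simple, neither denominator can vanish (a vanishing denominator would force the root to be the vertex of the parabola, hence a double root), and the computation above yields $\lim_{s\downarrow t}x'(s)=x'_+(t)$ and $\lim_{s\uparrow t}x'(s)=x'_-(t)$, the second alternative. The delicate points to watch are the justification that the $o(h)$ remainders survive division by $h$, where both $\E\in C^3$ and the existence of the one-sided derivatives are used, and the sign/root-multiplicity bookkeeping that makes the algebraic identity above legitimate.
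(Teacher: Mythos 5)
Your proposal is correct and follows essentially the same route as the paper: $\partial_{xt}\E(t,x(t))=0$ via Lemma \ref{le:dxt=0}, existence of $x'_\pm(t)$ as roots of (\ref{eq:tam-thuc}) via Lemma \ref{le:tn-notin-N0-t-notin-E} (with its ``moreover'' part handling the accumulation-point case), and in the isolated case the limit of $x'(s)=-\partial_{xt}\E(s,x(s))/\partial_{xx}\E(s,x(s))$ computed by L'H\^opital/Taylor together with the simple-root condition guaranteeing the denominator $\partial_{xxt}\E+\partial_{xxx}\E\,x'_\pm(t)$ is nonzero. Your extra remarks (that the restriction $s\notin{\rm int}(N)$ is vacuous near $t$, and the explicit algebraic collapse of the quotient to $x'_\pm(t)$) only make explicit what the paper leaves implicit.
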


\begin{proof} {\bf Step 1.} Since $t\in {\rm int}[(0,T)\minus {\rm int}(N) ]$ and $t\in E$, Lemma \ref{le:dxt=0} ensures that $\partial_{xt}\EE=0$. Therefore, by Lemma \ref{le:tn-notin-N0-t-notin-E}, we get that $x'_+(t)$, $x'_-(t)$ exist and they are two solutions of the equation (\ref{eq:tam-thuc}). 

\text{}\\
{\bf Step 2.} If $t$ is an accumulation point of $E$, then by Lemma \ref{le:tn-notin-N0-t-notin-E} again, the equation (\ref{eq:tam-thuc}) has a unique solution 
$-{\d_{xxt}\EE}/{\d_{xxx}\EE}.$ Therefore,
\[ x'(t)= -\frac{\d_{xxt}\EE}{\d_{xxx}\EE}.\]
\text{}\\
{\bf Step 3.} Now we assume that $t$ is an isolated point of $E$. If the equation (\ref{eq:tam-thuc}) has a unique solution, then it must be $-{\d_{xxt}\EE}/{\d_{xxx}\EE}$, and hence  
\[ x'(t)= -\frac{\d_{xxt}\EE}{\d_{xxx}\EE}.\]
Otherwise, if the equation (\ref{eq:tam-thuc}) has two distinct solutions, then we shall show that
\[x'_+(t)=\lim_{s\downarrow t} x'(s),~~x'_-(t)=\lim_{s\uparrow t} x'(s).\]
In fact, since $t$ is an isolated point of $E$, when $s$ is in a neighborhood of $t$ we have $s\notin E$. Therefore, using Lemma \ref{le:notin-N-J-E} and L'Hopital's rule, we have, as $s\downarrow t$, 
\bqq
  x'(s) =  - \frac{{{\partial _{xt}}\E(s,x(s))}}{{{\partial _{xx}}\E(s,x(s))}} =  - \frac{{\left( {\frac{{{\partial _{xt}}\E(s,x(s)) - {\partial _{xt}}\E(t,x(t))}}{{s - t}}} \right)}}{{\left( {\frac{{{\partial _{xx}}\E(s,x(s)) - {\partial _{xx}}\E(t,x(t))}}{{s - t}}} \right)}} \to  - \frac{{{\partial _{xtt}}\E(t,x(t)) + {\partial _{xxt}}\E(t,x(t))\,x{'_ + }(t)}}{{{\partial _{xxt}}\E(t,x(t)) + {\partial _{xxx}}\E(t,x(t))\,x{'_ + }(t)}} = x{'_ + }(t) .
\eqq
Here in the last identity we have used that $x'_+(t)$ solves the equation (\ref{eq:tam-thuc}). Note that $\partial _{xxt}\E(t,x(t)) + \partial _{xxx} \E(t,x(t))\,x'_ + (t)\ne 0$ because the equation (\ref{eq:tam-thuc}) has two distinct solutions.

Similarly, as $s\uparrow t$,
\[x'(s)= - \frac{{{\partial _{xt}}E(s,x(s))}}{{{\partial _{xx}}E(s,x(s))}} \to x'_-(t).\]
The proof is completed.
\end{proof}

Thus we can now provide

\begin{proof}[Proof of Theorem \ref{thm:differentiable} (i)] {\bf Step 1.} Assume that $x(\cdot)$ has only finitely many jumps and (H1) holds. By dividing $(0,T)$ into subintervals if necessary, we may further assume that $x(\cdot)$ has no jumps and
\[\{ (t,x)\in (0,T)\times \mathbb{R}\; |\;\d_x \E (t,x)\in \{-1,1\}, {\partial _{xx}}\E(t,x) = {\partial _{xxx}}\E(t,x) = 0\}=\emptyset.\]
Thus either $t \notin E$, or $t \in E$ and $\d_{xxx} \E(t,x(t)) \ne 0$. Choose $I_3$ and $I_1$ as follows
\bqq
&&I_3:= \{ t \in (0,T) \;|\;  x(\cdot) \text{ is differentiable at } t \} ; \hfill\\
&&I_1:= \{ t \in (0,T) \minus I_3\;|\; t \text{ is an isolated point of } \d \N0 \}.
\eqq
Now we consider the case $t$ is not an isolated point of $\d \N0 $. We have the following cases.

{\bf Case 1.} If $t\in {\rm int}(N) $, then $x'(t)=0$ by definition.

{\bf Case 2.} If $t$ is an accumulation point of $\d \N0 $, then $x'(t)=0$ by Lemma \ref{le:accum-N0-dxtt=0}.


{\bf Case 3.} If $t\in {\rm int}[(0,T)\minus {\rm int}(N) ]$ and $t\notin E$, then by Lemma \ref{le:notin-N-J-E},
\[ x'(t)=-\frac{\d_{xt}\EE}{\d_{xx}\EE}.\]

{\bf Case 4.} If $t\in {\rm int}[(0,T)\minus {\rm int}(N) ]$ and $t$ is an accumulation point of $E$, then by Lemma \ref{le:t-E}, 
\[ x'(t)=-\frac{\d_{xxt}\EE}{\d_{xxx}\EE}.\]

{\bf Case 5.} If $t\in {\rm int}[(0,T)\minus {\rm int}(N) ]$ and $t$ is an isolated point of $E$, then by Lemma \ref{le:t-E}, we have either  
\[ x'(t)=-\frac{\d_{xxt}\EE}{\d_{xxx}\EE},\]
or there exist $x'_+(t)$, $x'_-(t)$ and 
\[x_+ '(t) = \mathop {\lim }\limits_{s \downarrow t} x'(s), \; x_- '(t) = \mathop {\lim }\limits_{s \uparrow t} x'(s).\]

Thus we can choose $I_2$ as follows
\[ I_2:= (0,T) \minus (I_1 \cup I_3)= \{ t \in {\rm int}[(0,T)\minus {\rm int}(N)] \; | \; t \text{ is an isolated point of } E  \text{ and } x'_-(t)\ne x'_+(t) \}.\]

\text{}\\
{\bf Step 2.} Assume that (H2) also holds. Then by dividing $(0,T)$ into subintervals again we may assume further that 
\bq \label{eq:H2-empty}
  \left\{ {(t,x)\in (0,T)\times \mathbb{R} \; |\; \d_x \E (t,x)\in \{-1,1\}, \partial _{xx}\E(t,x) = \partial _{xt} \E(t,x) = 0,} \right. &~& \nn\hfill \\
  \left. {[\d_{xxt}\E (t,x) ]^2= \d_{xtt}\E (t,x)\cdot \d_{xxx} \E (t,x)} \right\}&=&\emptyset . 
\eq

We show that in this case the set $I:=I_1 \cup I_2$ only contains isolated points. Assume by contradiction that $t$ is an accumulation point of $I$. Thus we must have a sequence $t_n\to t\in I_1$ with $t_n\in I_2$ for all $n\ge 1$. By Lemma \ref{le:dxt=0} we have $\d_{xt}\E(t_n,x(t_n))=0$ for all $n$. Since $\d_{xx}\E (t_n,x(t_n))=\d_{xt}\E(t_n,x(t_n))=0$, taking the limit as $n\to \infty$ we get $\d_{xx}\E (t,x(t))=\d_{xt}\E(t,x(t))=0$. Therefore, by the second statement of Lemma \ref{le:tn-notin-N0-t-notin-E}, the equation (\ref{eq:tam-thuc}) has a unique solution $-\d_{xxt}\EE/\d_{xxx}\EE$. This implies that 
\[[\d_{xxt}\E (t,x) ]^2= \d_{xtt}\E (t,x)\cdot \d_{xxx} \E (t,x).\]
However, since $\d_{xx}\E (t,x(t))=\d_{xt}\E(t,x(t))=0$ and $\d_x \E(t,x(t))\in \{-1,1\}$ (by Lemma \ref{le:-1<=dx<=1-1-dim}), we obtain a contradiction to the assumption (\ref{eq:H2-empty}). The proof is completed.
\end{proof}

\section{Proof of Theorem \ref{thm:finite-jumps}}

In this section, we prove Theorem \ref{thm:finite-jumps}.
\begin{proof} {\bf Step 1.} Since $x(\cdot)$ is a BV function, we have $L:=\sup_{0\le t\le T}|x(t)| <\infty$. For any $t\in [0,T]$, define
\[\F(t):=\{x\in [-L,L]: |\partial_x \E(t,x)|=1\}.\]
We shall show that there exists $\eps>0$ independent of $t$ such that if $x,y\in \F(t)$ and $x\ne y$, then $|x-y|\ge \eps$. 

Indeed, we assume by contradiction that there exists a sequence $\{t_n\}_{n=1}^\infty \subset [0,T]$ and $x_n,y_n\in \F(t_n)$ such that $x_n< y_n$ and $|x_n-y_n|\to 0$. By compactness, after passing to subsequences if necessary, we may assume that $t_n\to t_0$, $x_n\to x_0$ and $y_n\to x_0$. Using the continuity of $\partial_x \E$, we have $|\partial_x \E(t_0,x_0)|=1$.

On the other hand, since $|\partial_x \E(t_n,x_n)|^2=1=|\partial_x \E(t_n,y_n)|^2$, by applying Rolle's Theorem for the function $z\mapsto |\partial_x \E(t_n,z)|^2$, we can find an element $z_n\in (x_n,y_n)$ such that $\partial_{xx}\E(t_n,z_n)=0$. Taking $n\to \infty$, we obtain $\partial_{xx}\E(t_0,x_0)=0$.

Thus $|\partial_x \E(t_0,x_0)|=1$ and $\partial_{xx}\E(t_0,x_0)=0$, which contradicts to the assumption (H5). Therefore, there exists $\eps>0$ independent of $t$, such that $|x-y|\ge \eps$ for all $x,y\in \F(t)$ and $x\ne y$.
\\\\
{\bf Step 2.} We assume that $x(\cdot)$ jumps at $t$, namely $ x(t^-)\ne x(t^+)$, here
\[ x(t^-):=\lim_{s\uparrow t}x(s)~~{\rm and}~ x(t^+):=\lim_{s\downarrow t}x(s).\]
We shall show that $|x(t^-)-x(t^+)|\ge \eps$. 

From the weak local stability of $x(\cdot)$, we have $|\partial_x \E(t,x(t^-))|\le 1$ and $|\partial_x \E(t,x(t^+))|\le 1$. If $|\partial_x \E(t,x(t^-))|=1=|\partial_x \E(t,x(t^-))|$, then by Step 1 we already get $|x(t^-)-x(t^+)|\ge \eps$. Hence, let us assume that 
\bq \label{eq:finite-jump-1}
\min \{ |\partial_x \E(t,x(t^-))|,|\partial_x \E(t,x(t^+))| \} < 1.
\eq

Using the energy-dissipation upper bound, we get
\bq\label{eq:finite-jump-2}
|x(t^+)-x(t^-)|\le \E(t,x(t^-))-\E(t,x(t^+))=\left| \int_{x(t^+)}^{x(t^-)} \partial_x \E(t,z)dz \right| \le \int_{I}|\partial_{x}\E(t,z)|
\eq
where $I$ is the closed interval  between $x(t^-)$ and $x(t^+)$. 

From (\ref{eq:finite-jump-1}) and (\ref{eq:finite-jump-2}), we conclude that there exists $y$ between  $x(t^-)$ and $x(t^+)$ such that $|\partial_x \E(t,y)|>1$. Since $|\partial_x \E(t,x(t^-))|\le 1< |\partial_x \E(t,y)|$, there exists $z_-$ between $x(t^-)$ and $y$ such that $|\partial_x \E(t,z_-)|=1$ (here $z_-$ may be equal to $x(t^-)$). Similarly, there exists $z_+$ between $x(t^+)$ and $y$ such that $|\partial_x \E(t,z_+)|=1$ (here $z_+$ may be equal to $x(t^+)$). Since $z_+\ne z_-$, we have $|z_+-z_-|\ge \eps$ by Step 1. Thus $|x(t^+)-x(t^-)|\ge |z_+-z_-|\ge \eps$.
\text{}\\
{\bf Step 3.} Thus by Step 2, any jump step is not less than $\eps$. Since $x(\cdot)$ is a BV function, it can only have finitely many jumps.
\end{proof}

\section{Appendix: Technical proofs}

\subsection{Proof of Lemma \ref{le:classical-2}}

We start by some elementary results.

\begin{lemma}\label{le:classical-1}
For any closed set $C$ in $\real^d$, there exists a smooth function $\varphi$ such that $\varphi: \real^d \to [0,1]$ and $\varphi^{-1}(0)=C$.
\end{lemma}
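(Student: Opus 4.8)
The plan is to build $\varphi$ directly by exhausting the complement of $C$ with balls and patching together standard bump functions. First I would note that $U:=\real^d\minus C$ is open, hence a countable union of open balls $U=\bigcup_{i\ge 1} B_i$ with $B_i=B(x_i,r_i)$. On each ball I would take the classical bump
\[
\psi_i(x):=\exp\!\left(-\frac{1}{r_i^2-|x-x_i|^2}\right)\quad\text{for }|x-x_i|<r_i,
\]
extended by $0$ outside $B_i$. Each $\psi_i$ is smooth on all of $\real^d$ (every derivative tends to $0$ as $|x-x_i|\uparrow r_i$), is strictly positive on $B_i$ and identically $0$ off $B_i$, and is bounded together with all its derivatives since it is supported in the compact set $\overline{B_i}$.

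Next I would form $\varphi:=\sum_{i\ge 1} c_i\,\psi_i$ for well-chosen positive constants $c_i$. For each $i$ set $M_i:=\max_{|\alpha|\le i}\|\d^\alpha\psi_i\|_{L^\infty}$ (finite, by compact support) and choose the diagonal weights $c_i:=2^{-i}/(1+M_i)$. Then for any fixed multi-index $\alpha$ the tail $\sum_{i\ge|\alpha|}c_i\|\d^\alpha\psi_i\|_{L^\infty}\le\sum_{i\ge|\alpha|}2^{-i}$ converges, while the finitely many earlier terms are smooth; hence the series defining $\varphi$ converges locally uniformly in $C^\infty$ and $\varphi$ is smooth. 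Moreover $c_i\|\psi_i\|_{L^\infty}\le 2^{-i}$ yields $0\le\varphi\le\sum_{i\ge1}2^{-i}=1$, so that $\varphi:\real^d\to[0,1]$ as required.

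Finally I would identify the zero set. Because $c_i>0$ and $\psi_i\ge 0$, we have $\varphi(x)=0$ if and only if $\psi_i(x)=0$ for every $i$, that is, if and only if $x\notin\bigcup_i B_i=U$, i.e. $x\in C$; hence $\varphi^{-1}(0)=C$. The one genuinely delicate point—where I would concentrate the care—is the $C^\infty$ convergence of the infinite sum: the weights $c_i$ must be small enough to dominate all derivatives of $\psi_i$ simultaneously, and the choice $c_i=2^{-i}/(1+M_i)$ is exactly what makes every derivative series converge locally uniformly, so that the limit inherits smoothness.
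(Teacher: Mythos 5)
Your proof is correct and takes essentially the same approach as the paper: both write $\mathbb{R}^d\setminus C$ as a countable union of open balls, attach to each ball a smooth nonnegative function that vanishes exactly on its complement, and sum these with weights chosen so that the terms of order $i$ dominate all derivatives up to order $i$ by $2^{-i}$, giving $C^\infty$ convergence, values in $[0,1]$, and zero set exactly $C$. The only difference is cosmetic: you construct the bump functions explicitly via $\exp\bigl(-1/(r_i^2-|x-x_i|^2)\bigr)$, whereas the paper cites their existence as a classical result.
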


\begin{proof} Since the set $\real^d\minus C$ is open, we can find a family of open balls $\{B_n\}$ such that 
\[ \real^d \minus C = \bigcup_{n\in \mathbb{N}} B_n. \]

Moreover, a classical result tells us that, for any $n \in \mathbb{N}$, there exist $\varphi_n: \real^d \to [0,1]$ such that $\varphi_n$ is of class $C^{\infty}$ and $\varphi_n^{-1}(0)=\real^d \minus B_n$. 

Take $\varphi:=\sum_{n \in \mathbb{N}} \alpha_n \varphi_n$ with $\alpha_n >0 \fall n$. This implies $\varphi^{-1}(0)=C$.

Now for every $n \in \mathbb{N}$, we choose $\alpha_n$ such that $\|D^k\varphi_n\|_{\infty} \cdot \alpha_n \le 2^{-n} \fall k=0, 1, \dots, n$. It is easy to check that $\varphi(\real^d) \in [0,1]$ and $\varphi$ is of class $C^{\infty}$. This completes the proof of Lemma \ref{le:classical-1}.
\end{proof}

\begin{lemma}\label{le:classical-3}
For any couple of disjoint closed sets $C_0, C_1$ in $\mathbb{R}^d$, there exists a smooth function $\varphi: \mathbb{R}^d \to [0,1]$ such that $\varphi^{-1}(0)=C_0, \varphi^{-1}(1)=C_1$.
\end{lemma}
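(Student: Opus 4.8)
The final statement to prove is Lemma~\ref{le:classical-3}: for any two disjoint closed sets $C_0, C_1$ in $\mathbb{R}^d$, there exists a smooth function $\varphi: \mathbb{R}^d \to [0,1]$ with $\varphi^{-1}(0)=C_0$ and $\varphi^{-1}(1)=C_1$.

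\textbf{The plan.} The natural approach is to reduce Lemma~\ref{le:classical-3} to the already-proved Lemma~\ref{le:classical-1}, which produces, for a single closed set, a smooth nonnegative function vanishing exactly on that set. Since $C_0$ and $C_1$ are each closed, Lemma~\ref{le:classical-1} gives smooth functions $\psi_0, \psi_1: \mathbb{R}^d \to [0,1]$ with $\psi_0^{-1}(0) = C_0$ and $\psi_1^{-1}(0) = C_1$. The key observation is that because $C_0 \cap C_1 = \emptyset$, the functions $\psi_0$ and $\psi_1$ never vanish simultaneously, so their sum $\psi_0 + \psi_1$ is strictly positive everywhere on $\mathbb{R}^d$. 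This positivity is exactly what lets me form a smooth quotient without dividing by zero.

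\textbf{Key steps, in order.} First I would invoke Lemma~\ref{le:classical-1} twice to obtain $\psi_0$ and $\psi_1$ as above. Second, I would verify that $\psi_0(x) + \psi_1(x) > 0$ for every $x$: if the sum vanished at some point, both $\psi_0$ and $\psi_1$ would vanish there, forcing that point into $C_0 \cap C_1 = \emptyset$, a contradiction. Third, I would define
\[
\varphi(x) := \frac{\psi_0(x)}{\psi_0(x) + \psi_1(x)}.
\]
Since the denominator is smooth and strictly positive, $\varphi$ is smooth on all of $\mathbb{R}^d$. Fourth, I would check the range and level sets: clearly $\varphi(x) \in [0,1]$ since $0 \le \psi_0 \le \psi_0 + \psi_1$; moreover $\varphi(x) = 0$ if and only if $\psi_0(x) = 0$, i.e. $x \in C_0$, and $\varphi(x) = 1$ if and only if $\psi_1(x) = 0$, i.e. $x \in C_1$. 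This establishes $\varphi^{-1}(0) = C_0$ and $\varphi^{-1}(1) = C_1$, completing the proof.

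\textbf{Main obstacle.} This argument is essentially routine once Lemma~\ref{le:classical-1} is in hand, and the only genuinely substantive point is the observation that disjointness of the closed sets guarantees nonvanishing of the denominator, which secures smoothness of the quotient across all of $\mathbb{R}^d$. I do not anticipate any serious difficulty; the construction is the standard smooth-partition trick adapted to two sets. If one wanted a self-contained argument avoiding Lemma~\ref{le:classical-1}, one could instead build $\psi_0, \psi_1$ directly from smooth bump functions supported away from the respective sets, but reusing Lemma~\ref{le:classical-1} is cleaner and keeps the proof short.
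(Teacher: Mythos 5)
Your proposal is correct and follows exactly the paper's own argument: invoke Lemma~\ref{le:classical-1} twice and set $\varphi = \psi_0/(\psi_0+\psi_1)$, with disjointness of $C_0$ and $C_1$ guaranteeing the denominator never vanishes. Your write-up is in fact slightly more detailed than the paper's, which leaves the verification of the level sets to the reader.
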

\begin{proof}
Taking $\varphi_0, \varphi_1$ as in Lemma \ref{le:classical-1} such that $\varphi_0^{-1}(0) = C_0$ and $\varphi_1^{-1}(0) = C_1$.
For every $x \in \mathbb{R}^d$, we choose 
\[\varphi(x):=\frac{\varphi_0(x)}{\varphi_0(x)+\varphi_1(x)}\]
then we can check easily that $\varphi$ satisfies all requirements of Lemma \ref{le:classical-3}. 
\end{proof}

Now we are ready to give the proof of Lemma \ref{le:classical-2}.

\begin{proof} Define
\bqq
&&C_1:=\{ (t,x)\; |\; x\ge u(t^-)\},~~  D_1:=\{(t,x)\;|\; x \le -M\},\hfill\\
&&C_2:=\{ (t,x)\; |\; x\le u(t^+)\},~~  D_2:=\{(t,x)\;|\; x \ge M\}.
\eqq

Obviously, $D_1$ and $D_2$ are closed sets in $\mathbb{R}^2$. Moreover, $C_1$ and $D_1$ are disjoint, $C_2$ and $D_2$ are disjoint.

We show that $C_1$ and $C_2$ are closed sets in $\real^2$. For example, to prove that $C_1$ is closed, we need to show that if a sequence $\{(t_n,x_n)\}_{n\ge 1}\subset C_1$  converges to $(t_0,x_0)$, then $(t_0,x_0)\in C_1$, namely $x_0 \ge u(t_0^-)$. Indeed, if $s<t_0$, then for $n$ large enough we have $t_n>s$, and hence $x_n \ge u(t_n^-)\ge u(s)$. Thus $x_0=\lim x_n \ge u(s)$ for all $s<t_0$, which implies that $x_0\ge \lim_{s\uparrow t_0} u(s)=u(t_0^-)$. Thus $C_1$ is closed. Similarly, we have $C_2$ is closed.

Applying Lemma \ref{le:classical-3}, we can choose two smooth functions $g_1: \real^2 \to [0,1]$ and $g_2: \real^2 \to [0,1]$ such that
\bqq
&& g_1^{-1}(0) = C_1 ~~{\rm and}~g_1^{-1}(1) = D_1,\hfill\\
&& g_2^{-1}(0) = C_2 ~~{\rm and}~g_2^{-1}(1) = D_2.
\eqq

We define $g(t,x):=g_2(t,x)-g_1(t,x)$ for all $(t,x) \in [0,T]\times \real$ .
It is straight-forward to see that the function $g$ has all desired properties. 
\end{proof}

\subsection{Proof of Lemma \ref{le:msA=0}}
We see that Lemma \ref{le:msA=0} is verified if we can check the following result.
\begin{lemma}\label{le:derivative-of-BV-functs}
At $|\mu_s|$-almost every point $t \in [0,T]\minus J$, the left and right derivatives of $x$ at $t$ exist and are both equal to $+\infty$ or both equal to $-\infty$. Here $x:[0,T] \to \mathbb{R}$ is any right-continuous BV function, the measure $\mu$ is the weak derivative of $x$, $\mu_s$ is the singular part of $\mu$ w.r.t. Lebesgue measure, and $J$ is the jump set of $x$.
\end{lemma}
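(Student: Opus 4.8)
The plan is to read off the one-sided derivatives of $x$ from the polar and density structure of its derivative measure $\mu$. Since $x$ is the right-continuous primitive $x(t)=\text{const}+\mu((0,t])$ and the hypothesis $t\notin J$ forces $\mu(\{t\})=0$, I would first record that
\[
x'_+(t)=\lim_{h\downarrow 0}\frac{\mu([t,t+h])}{h},\qquad x'_-(t)=\lim_{h\downarrow 0}\frac{\mu([t-h,t])}{h},
\]
so the whole statement reduces to showing that at $|\mu_s|$-a.e.\ $t$ both quotients converge to $\sigma(t)\cdot(+\infty)$, where $\sigma:=\tfrac{d\mu}{d|\mu|}\in\{-1,1\}$ is the polar function. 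Writing $\mu=\sigma\,|\mu|$, I would split the argument into a \emph{polar limit} (the normalized one-sided increments tend to $\sigma(t)$) and a \emph{density blow-up} (the one-sided densities of $|\mu|$ tend to $+\infty$ on a carrier of $|\mu_s|$), and then multiply.

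For the polar limit I would use that the collection of all closed intervals of $\R$ is a Vitali differentiation basis for every Radon measure (Besicovitch covering theorem in one dimension), so the differentiation theorem applies to $\sigma\in L^\infty(|\mu|)$ along arbitrary intervals shrinking to a point. Hence at $|\mu|$-a.e.\ $t$, taking $I=[t,t+h]$ and $I=[t-h,t]$, one gets $\tfrac{1}{|\mu|(I)}\int_I|\sigma-\sigma(t)|\,d|\mu|\to 0$, and therefore $\tfrac{\mu(I)}{|\mu|(I)}\to\sigma(t)\in\{-1,1\}$. Because $|\mu_s|\le|\mu|$, this holds $|\mu_s|$-a.e.\ too.

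For the density blow-up I would fix a Lebesgue-null Borel carrier $Z$ of $|\mu_s|$ and, for each $M\in\N$, estimate the $|\mu|$-measure of $A:=\{t\in Z:\ \liminf_{h\downarrow 0}|\mu|([t,t+h])/h\le M\}$. Given $\eta>0$, choose open $U\supseteq A$ with $\mathcal L(U)<\eta$ (possible since $\mathcal L(A)=0$); the intervals $[t,t+h]\subseteq U$ with $|\mu|([t,t+h])\le (M+1)h$ and $h$ small form a fine cover of $A$, and the Vitali covering theorem \emph{for the measure $|\mu|$} yields a countable disjoint subfamily $\{[t_i,t_i+h_i]\}$ with $|\mu|(A\minus\bigcup_i[t_i,t_i+h_i])=0$, so that
\[
|\mu|(A)\le\sum_i|\mu|([t_i,t_i+h_i])\le(M+1)\sum_i h_i\le(M+1)\,\mathcal L(U)<(M+1)\eta .
\]
Letting $\eta\downarrow 0$ gives $|\mu|(A)=0$, and the union over $M\in\N$ shows $\liminf_{h\downarrow 0}|\mu|([t,t+h])/h=+\infty$ for $|\mu_s|$-a.e.\ $t\in Z$; the symmetric argument on $[t-h,t]$ handles the left side.

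Combining the two steps, at $|\mu_s|$-a.e.\ $t\notin J$ I would write $\tfrac{\mu([t,t+h])}{h}=\tfrac{\mu([t,t+h])}{|\mu|([t,t+h])}\cdot\tfrac{|\mu|([t,t+h])}{h}\to\sigma(t)\cdot(+\infty)$, and likewise on the left, so $x'_+(t)$ and $x'_-(t)$ are both $+\infty$ when $\sigma(t)=1$ and both $-\infty$ when $\sigma(t)=-1$. The hard part will be the one-sidedness: the usual centered Besicovitch theorem does not control the non-centered intervals $[t,t+h]$, and this is exactly why I invoke the Vitali covering theorem for a general Radon measure on $\R$ with the full family of closed intervals as differentiation basis. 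A second point requiring care is that the covering estimate must be carried out with respect to $|\mu|$ rather than $\mathcal L$, since the uncovered remainder is only Lebesgue-null while $|\mu_s|$ may charge Lebesgue-null sets.
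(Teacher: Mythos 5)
Your proof is correct, but it follows a genuinely different route from the paper's. The paper works geometrically: it completes the graph of $x$ by inserting vertical segments at the jumps, parametrizes the resulting set $G$ by an injective Lipschitz arc-length path $\gamma$, uses the $\mathcal{H}^1$-a.e.\ existence of tangent lines to split $G$ into a vertical and a horizontal part, shows $|\mu_s|\le\lambda_v$ (the projection of $\mathcal{H}^1\llcorner G_v$), and then reads off the difference quotient of $x$ through the parametrization, the quotient $\gamma_2'/\gamma_1'$ degenerating to $\pm\infty$ exactly where the tangent is vertical. You instead stay entirely on the measure side: the polar factor $\mu(I)/|\mu|(I)\to\sigma(t)\in\{-1,1\}$ via differentiation of $\sigma=d\mu/d|\mu|$, and the density factor $|\mu|(I)/|I|\to+\infty$ on a Lebesgue-null carrier of $|\mu_s|$ via a Vitali covering estimate. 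Both arguments are sound. What yours buys is a self-contained real-analysis proof that makes the sign agreement of the two one-sided derivatives transparent (both equal $\sigma(t)\cdot\infty$); what it costs is exactly the point you flag, namely the non-centered interval basis: you need that in dimension one every point-cover by closed intervals admits a subcover of overlap at most two, which gives the weak-type maximal inequality and the Vitali property for arbitrary Radon measures -- this is classical but should be cited or proved, since the centered Besicovitch theorem alone does not suffice. The paper's route avoids this machinery but leans on several geometric ``Facts'' stated without proof. Two small points to tidy in your write-up: the left difference quotient is literally $\mu((t-h,t])/h$ rather than $\mu([t-h,t])/h$, but since $\mu((t-h,t])=\lim_{h'\uparrow h}\mu([t-h',t])$ the closed-interval limits control the half-open ones, so your conclusion transfers; and in the covering step the intervals $[t,t+h]$ have $t$ as an endpoint rather than an interior point, which the non-centered covering theorem accommodates.
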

Lemma \ref{le:derivative-of-BV-functs} is somehow well-known to experts on BV functions. However, since we could not find it in any standard reference book on the subject, we give here a short sketch of proof.
First, we need the following facts which are more or less well-known.

\text{}\\
{\bf Fact 1:} Let $t_n$ be the points in the jump set $J$. We take the union of the graph of $x$ and replace every point $(t_n,x(t_n))$ by the vertical segment $S_n$ with endpoints $(t_n,x(t_n^-))$ and $(t_n,x(t_n^+))$. We call this new set the ``complete graph" of $x$ and we denote it by $G$. Notice that $x$ is right-continuous, so $x(t_n)$ is always between $x(t_n^-)$ and $x(t_n^+)$, here by $x(t_n^-)$ and $x(t_n^+)$ we mean the left and right limit of $x$ at $t_n$.

We claim that there exists a Lipschitz injective path $\gamma: [0,L] \to G$ which parametrize $G$ and has the following property: 
\begin{itemize}
\item There exists countably many pairwise disjoint closed intervals $I_n$ contained in $[0,L]$ such that the restriction of $\gamma$ to each $I_n$ parametrize the segment $S_n$.
\item Given two points $s,s'$ with $s<s'$ which do not belong to the same interval $I_n$, then $\gamma_1(s)<\gamma_1(s')$ (here and below we write $\gamma_1$ and $\gamma_2$ for the two components of $\gamma$).
\end{itemize}

Finally, by choosing $L$ properly, we can also assume that $\gamma$ is an arc-length parametrization, which means that the derivative $\dot \gamma(s)$ is a vector of norm $1$ for all $s$ where it exists (that is, almost every $s$ in $[0,L]$).

\text{}\\
{\bf Fact 2:} Since $\gamma$ is injective and Lipschitz, at $\mathcal{H}^1$-almost every point $z$ of $G$, there exists a tangent line $L_z$ intended in the classical sense, here $\mathcal{H}^1$ is the $1$-dimensional Hausdorff measure. More precisely, $L_z$ exists for all $z=\gamma (s)$ such that $\gamma$ is differentiable at $s$, and $L_z$ is the line generated by the vector $\dot \gamma (s)$.

\text{}\\
{\bf Fact 3:} Let $p$ be the projection of $G$ on the horizontal axis, and let $\lambda$ be the positive measure on $[0,T]$ which is obtained as the push-forward according to the map $p$ of the measure $\sigma$ given by the restriction of $\mathcal{H}^1$ to the graph $G$, that is, $\lambda:=p_{\#}\sigma$. Then $|\mu| \le \lambda$ and in particular $|\mu|$ is absolutely continuous w.r.t. $\lambda$.


\text{}\\
{\bf Fact 4:} We can split $G$ in two parts: 
\begin{itemize}
\item The ``vertical part" $G_v$ consists of all points $z$ where the tangent line $L_z$ exists and is vertical.
\item The ``horizontal part" $G_h$ consists of all points $z$ where the tangent line $L_z$ exists and is not vertical.
\end{itemize}
Then, we can construct the measures $\lambda_v$ and $\lambda_h$ as before. We claim that $|\mu_s| \le \lambda_v$ (actually $|\mu_s| = \lambda_v$, but we do not need this).

\text{}\\
Now, we are back to the proof of Lemma \ref{le:derivative-of-BV-functs}.
\begin{proof}[Proof of Lemma \ref{le:derivative-of-BV-functs}]
As a consequence of Fact 4, it suffices to show that for $\lambda_v$-a.e. $t \notin J$, the derivative of $x$ at $t$ exists and is $+\infty$ or $-\infty$. Indeed, one shows that this is true at every point $t \notin J$ such that the tangent line $L_z$ exists, and $z$ belongs to $G_v$ (that is, the line $L_z $ is vertical). Here $z:=(t,x(t))$.

More precisely, take $t$ and $z$ as above, and let $s$ such that $z=\gamma (s)$. Then $L_z$ is the line generated by the vector $v:=\dot{\gamma}(s)$, and since this line is vertical, we have that either $v=(0,1)$ or $v=(0,-1)$. Then one easily shows that in the first case, the right and left derivatives of $x$ at $t$ are $+\infty$, and in the second case they are $-\infty$.

In fact, assume that we are in the first case. Taking any sequence $t_n$ that converges to $t$, and let $s_n$ be such that $\gamma(s_n)=(t_n,x(t_n))$. Then
\bqq
\frac{x(t_n)-x(t)}{t_n-t} = \left[ \frac{\gamma_2(s_n)-\gamma_2(s)}{s_n-s}\right] / \left[\frac{\gamma_1(s_n)-\gamma_1(s)}{s_n-s}\right]. 
\eqq
Notice that here we have $\dot{\gamma}(s)=(0,1)$. This implies that the first quotient at the right-hand side of the above equality tends to $+1$, while the second one tends to $0$, and more precisely to $0^+$ because of the fact that $\gamma_1$ is increasing in the sense specified above in Fact 1. Thus, the limit of the quotient in the left-hand side of the formula above must be $1/0^+ = +\infty$.
\end{proof}

\text{}\\

\vspace{\baselineskip}

 \end{document}